\newtheorem*{theoA}{Theorem A}
\newtheorem*{theoB}{Theorem B}
\newtheorem*{theoC}{Theorem C}
\newtheorem*{theoD}{Theorem D}
\newtheorem*{theoE}{Theorem E}
\newtheorem*{theoF}{Theorem F}
\newtheorem*{theoG}{Theorem G}
\newtheorem*{theoH}{Theorem H}
\newtheorem*{theoI}{Theorem I}
\newtheorem*{theoJ}{Theorem J}
\newtheorem{theo}{Theorem}[section]
\newtheorem{lem}{Lemma}[section]
\theoremstyle{definition}
\newtheorem{exm}{Example}[section]
\newtheorem{ques}{Question}[section]
\newtheorem{defi}{Definition}[section]
\theoremstyle{remark}
\newtheorem{rem}{Remark}[section]
\newcommand{\ol}{\overline}
\newcommand{\be}{\begin{equation}}
\newcommand{\ee}{\end{equation}}
\newcommand{\beas}{\begin{eqnarray*}}
    \newcommand{\eeas}{\end{eqnarray*}}
\newcommand{\bea}{\begin{eqnarray}}
\newcommand{\eea}{\end{eqnarray}}
\numberwithin{equation}{section}
\renewcommand{\leq}{\leqslant}
\renewcommand{\geq}{\geqslant}
\begin{document}

\title[Uniqueness of meromorphic functions]{Uniqueness of $P(f)$ and $[P(f)]^{(k)}$ concerning weakly weighted sharing}

\author{Molla Basir Ahamed}
\address{Molla Basir Ahamed, School of Basic Science, Indian Institute of Technology Bhubaneswar, Bhubaneswar, 752050, Odisha, India}
\email{bsrhmd117@gmail.com}
\address{Molla Basir Ahamed, Kalipada Ghosh Tarai Mahavidyalaya, Bagdogra, West Bengal, 734014, India}
\email{bsrhmd2014@gmail.com}

\begin{abstract}
In this paper, with the help of the idea of weakly weighted sharing introduced by \emph{Lin -Lin} [Kodai Math. J., 29(2006), 269-280], we study the uniqueness of a polynomial expression $ P(f) $ and $ [P(f)]^{(k)} $ of a meromorphic function $ f $ sharing a small function. The main results significantly improved the result of \emph{Liu - Gu} [Kodai Math. J., 27(3)(2004), 272-279]. This research work explores certain condition under which the polynomial $ P(f) $ can be reduced to a non-zero monomial, and as a consequence, the specific form of the function $ f $ is obtained. By some constructive examples it has been shown that some conditions in the main results can not be removed and some of the inequalities are sharp.

\vspace{2mm}

\noindent\textsc{2010 Mathematics Subject Classification.} 30D35.

\vspace{2mm}

\noindent\textsc{Keywords and phrases.} Meromorphic function, derivatives, shared values, small functions, weakly-weighted sahring.
\end{abstract}

\thanks{This work was supported by Institutional Post Doctoral Fellowship of Indian Institution of Technology Bhubaneswar.}


\maketitle


\section{Introduction, Definitions and Results}

Let $ \mathbb{C} $ be the complex plane, and let $ f $ be a non-constant meromorphic function defined on $ \mathbb{C} $. We assume that the reader is familiar with the standard definitions and notations used in the Nevanlinna value distribution theory, such as $ T(r,f),\; m(r,f),\ N(r,f) $ etc (see \cite{Hay & 1964, Yan & 1993, Yi & Yan & 1995}). By $ S(r,f) $ we denote any quantity the condition $ S(r,f)-o\left(T(r,f)\right) $ as $ r\rightarrow\infty $ possibly outside of an exceptional set $ E $ of  finite linear measure. A meromorphic function $ a\equiv a(z) $  is called a small function with respect to $ f $ if either $ a\equiv \infty $ or $ T(r,a)=S(r,f) $. throughout the paper, we denote $ S(f) $, the set of all small functions with respect to $ f $. One can easily verify that $ \mathbb{C}\cup\{\infty\}\subset S(f) $ and $ S(f) $ forms a field over the field of complex numbers.  \par For $ a\in\mathbb{C}\cup\{\infty\} $, the quantities $ \delta(a,f) $ and $ \Theta(a,f) $, defined as follows \beas \delta(a,f)=1-\limsup_{r\rightarrow\infty}\frac{N(r,a;f)}{T(r,f)} \eeas and \beas \Theta(a,f)=1-\limsup_{r\rightarrow\infty}\frac{\ol N(r,a;f)}{T(r,f)}, \eeas are respectively called the deficiency and ramification index of $ a $ for the function $ f $.\par For any two non-constant meromorphic functions $ f $ and $ g $, and $ a\in S(f) $, we say that $ f $ and $ g $ share $ a $ $ IM $ $ (CM) $ provided that $ f-a $ and $ g-a $ have the same set of zeros ignoring (counting) multiplicities. If $ 1/f $ and $ 1/g $ share $ 0 IM \; (CM) $, we say that $ f $ and $ g $ share $ \infty IM\; (CM). $
\par
\begin{defi}\cite{Lin & Lin & KMJ & 2006}
    Let $ N_E(r,a) $ be the counting function of all the common zeros of $ f-a $ and $ g-a $ with the same multiplicities, and $ N_0(r,a) $ be the counting function of all common zeros with ignoring multiplicities. We denote by $ \ol N_E(r,a) $ and $ \ol N_0(r,a) $ the reduced counting function of $ f $ and $ g $ corresponding to the counting functions $ N_E(r,a) $ and $ N_0(r,a) $, respectively. If \beas \ol N\left(r,\frac{1}{f-a}\right)+\ol N\left(r,\frac{1}{g-a}\right)-2\ol N_E(r,a)=S(r,f)+S(r,g), \eeas then we say that $ f $ and $ g $ share $ a $ $ CM $. On the other way, if \beas \ol N\left(r,\frac{1}{f-a}\right)+\ol N\left(r,\frac{1}{g-a}\right)-2\ol N_0(r,a)=S(r,f)+S(r,g), \eeas then we say that $ f $ and $ g $ share $ a $ $ IM $.
\end{defi}
\begin{defi}\cite{Lin & Lin & KMJ & 2006}
    Let $ k $ be a positive integer, and let $ f $ be a non-constant meromorphic function and $ a\in S(f) $.
    \begin{enumerate}
        \item[(a).] $ \displaystyle\ol N_{k)}\left(r,\frac{1}{f-a}\right) $ denotes the counting function of those $ a $-points of $ f $ whose multiplicities are not greater than $ k $, where each $ a $-point is counted only once.
        \item[(b).] $ \displaystyle\ol N_{(k}\left(r,\frac{1}{f-a}\right) $ denotes the counting function of those $ a $-points of $ f $ whose multiplicities are not less than $ k $, where each $ a $-point is counted only once.
        \item[(c).] $ \displaystyle\ol N_{k}\left(r,\frac{1}{f-a}\right) $ denotes the counting function of those $ a $-points of $ f $, where an $ a $-point of $ f $ with multiplicity $ m $ counted $ m $ times if $ m\leq k $ and $ k $ times if $ m>k $.
    \end{enumerate}
\end{defi}
\begin{defi}
    For $ a\in\mathbb{C}\cup\{\infty\} $ and $ p\in\mathbb{N} $, and for a meromorphic function $ f $, we denote by $ N_p\left(r,\frac{1}{f-a}\right) $ the sum \beas \ol N\left(r,\frac{1}{f-a}\right)+\ol N_{(2}\left(r,\frac{1}{f-a}\right)+\ldots+\ol N_{(p}\left(r,\frac{1}{f-a}\right). \eeas\par Clearly, $ \displaystyle N_1\left(r,\frac{1}{f-a}\right)= \ol N\left(r,\frac{1}{f-a}\right). $
\end{defi}
\begin{defi}
    We denote by $ \delta_k (a,f)$ the quantity \beas \delta_k(a,f)=1-\limsup_{r\rightarrow\infty}\frac{N_k\left(r,a;f\right)}{T(r,f)},
    \eeas where $ k $ is a positive integer. Clearly, $ \delta_k(a,f)\geq \delta(a,f). $
\end{defi}

\par From the last few decades, the uniqueness theory of entire or meromorphic functions has become a prominent branch of the value distribution theory (see \cite{Yi & Yan & 1995}). \emph{Rubel - Yang} \cite{Rub & Yan & SV & 1977} first established the result when an entire function $ f $ and its derivative $ f^{\prime} $ share two complex values $ a $ and $ b $ $ CM $, then they are identical i.e., $ f\equiv f^{\prime} $. An elementary calculation shows that the function will be of the form $ f(z)=ce^z $, where $ c $ is a non-zero constant. In $ 1979 $,  improving the result in \cite{Lah & Dew & KMJ & 2003}, analogous result corresponding to IM sharing was obtained by \textit{Mues - Steinmetz} \cite{Mue & Ste & CVTA & 1986}.\par In course of time, many researchers such as \emph{Br$ \ddot{u} $ck} \cite{Bru & RM & 1996}, \emph{Ahamed} \cite{Aha & CKMS & 2018}, \emph{Banerjee - Ahamed} \cite{Ban & Aha & 2015 & AUPOFRN,Ban & Aha & AUVTSMI & 2016,Aha & Ban & BTUB & 2017,Ban & Aha & EJMAA & 2018}, \emph{Gundersen} \cite{Gun & JMMA & 1980}, \emph{Yang} \cite{Yan & BAMS & 1990} et al. became more involved to find out the relation between an entire or meromorphic function with its higher order derivatives or with some general (linear) differential polynomials, sharing one value or sets of values. Finding the class of the functions, \textit{Yang - Zhang} \cite{Yan & Zha &AM & 2008} (see also \cite{Zha & KMJ & 2009}) first considered the uniqueness of a power of a meromorphic (entire)
function $ F=f^n $ and its derivative $ F^{\prime} $ when they share certain value. \par In the paper,
\textit{Yang-Zhang} \cite{Yan & Zha &AM & 2008} explores the class of the functions satisfying some differential equations of some special forms. Now we are invoking the following results which elaborates the gradual developments to this setting of meromorphic functions. \textit{Zhang} \cite{Zha & KMJ & 2009} proved a theorem, which improved all the results obtained in \cite{Yan & Zha &AM & 2008}.
\par In $ 2003 $, \emph{Yu} \cite{Yu & JIPAM & 2003} considered the uniqueness problem of entire and meromorphic functions when it shares one small functions with its derivative, and proved the following results.

\begin{theoA}\cite{Yu & JIPAM & 2003}
    Let $ k $ be a positive integer, and $ f $ be a non-constant entire function and $ a\in S(f) $ and $ a\not\equiv 0, \infty $. If $ f $ and $ f^{(k)} $ share $ a $ $ CM $ and $ \delta(0;f)>\frac{3}{4} $, then $ f\equiv f^{(k)} $.
\end{theoA}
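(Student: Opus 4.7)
The plan is to argue by contradiction, assuming $f \not\equiv f^{(k)}$. The key device is the meromorphic ratio $H := (f^{(k)} - a)/(f - a)$ together with the auxiliary function
\[
\varphi := \frac{(f - a)'}{f - a} - \frac{(f^{(k)} - a)'}{f^{(k)} - a} = -\frac{H'}{H}.
\]
Since $f$ is entire and $a$ is small, both $f - a$ and $f^{(k)} - a$ are meromorphic with poles confined to $a$'s singular set. By CM sharing the two share zero sets with equal multiplicities, so at each common zero the two logarithmic derivatives contribute cancelling simple poles with the same residue, yielding $N(r, \varphi) = S(r,f)$. Combined with $m(r, \varphi) = S(r,f)$ from the lemma on the logarithmic derivative, this gives $T(r, \varphi) = S(r,f)$.

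If $\varphi \equiv 0$, integrating forces $H \equiv c$ for some nonzero constant $c$, so $f^{(k)} = cf + (1-c)a$. The case $c = 1$ is the desired conclusion. For $c \neq 1$, rearranging to $(1-c)\,a/f = 1 - c\,f^{(k)}/f$ and applying $m(r, \cdot)$ together with the lemma on the logarithmic derivative and $a \in S(f)$ yields $m(r, 1/f) = S(r,f)$; then the first main theorem gives $N(r, 1/f) = T(r,f) + S(r,f)$, i.e.\ $\delta(0, f) = 0$, contradicting the hypothesis.

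In the main case $\varphi \not\equiv 0$, a Taylor expansion at each zero $z_0$ of $f$ of multiplicity $p \geq k+2$ (off an exceptional set of counting $S(r,f)$ on which $a(z_0) \in \{0, \infty\}$) shows that $\varphi$ vanishes at $z_0$ to order at least $p - k - 1$, because the dominant behavior of both logarithmic derivatives comes from the vanishing of $f^{(k)}$ at $z_0$. Summing yields $\sum_{p \geq k+2}(p - k - 1)\,n_p(r, 1/f) \leq N(r, 1/\varphi) + S(r, f) = S(r,f)$, so contributions to $N(r, 1/f)$ from zeros of multiplicity $\geq k+2$ are negligible. Applying the second main theorem to $f$ with the three targets $\{0, a, \infty\}$ (using $\ol N(r, f) = 0$) together with the deficiency bound $\ol N(r, 1/f) \leq N(r, 1/f) \leq (\tfrac{1}{4} - \eta) T(r,f)$ produces
\[
\Big(\tfrac{3}{4} + \eta\Big) T(r, f) \leq \ol N\!\Big(r, \tfrac{1}{f - a}\Big) + S(r, f).
\]

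The decisive and most delicate step is now to produce a matching upper bound on $\ol N(r, 1/(f - a))$ that contradicts the above. The strategy is to exploit CM sharing: every zero of $f - a$ of multiplicity $m$ is a zero of $f - f^{(k)} = -(H - 1)(f - a)$ of multiplicity at least $m$, and the defect analysis for the zero- and pole-free function $H$ together with the refined multiplicity constraints around zeros of $\varphi$ established above should yield an upper bound of the form $\ol N(r, 1/(f - a)) \leq (\tfrac{3}{4} - \eta') T(r, f) + S(r, f)$ for some $\eta' > 0$, producing the contradiction. This final bounding step, where the full strength of the $\tfrac{3}{4}$ threshold is used, is the main technical hurdle of the argument.
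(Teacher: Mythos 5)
This statement is Yu's Theorem A, which the paper only quotes as background and does not prove, so your attempt can only be judged on its own merits, and it has a genuine gap at exactly the decisive point. The preparatory material is fine: $T(r,\varphi)=S(r,f)$ from CM sharing plus the logarithmic derivative lemma; the case $\varphi\equiv 0$, where $H\equiv c$ and $c\neq 1$ forces $m(r,1/f)=S(r,f)$, hence $\delta(0,f)=0$; the local expansion showing that a zero of $f$ of multiplicity $p\geq k+2$ (with $a\neq 0,\infty$ there) is a zero of $\varphi$ of order at least $p-k-1$; and the three-small-functions second main theorem giving $(\tfrac34+\eta)T(r,f)\leq \ol N\left(r,\tfrac{1}{f-a}\right)+S(r,f)$. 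But the proof then stands or falls with the matching upper bound $\ol N\left(r,\tfrac{1}{f-a}\right)\leq(\tfrac34-\eta')T(r,f)+S(r,f)$, and you do not prove it; you only assert that CM sharing and a ``defect analysis'' of $H$ \emph{should} yield it. That bound is the entire content of the theorem, and the mechanism you sketch does not produce it: from ``zeros of $f-a$ are zeros of $f-f^{(k)}=-(H-1)(f-a)$'' one gets only $\ol N\left(r,\tfrac{1}{f-a}\right)\leq N\left(r,\tfrac{1}{f-f^{(k)}}\right)\leq T(r,f)+S(r,f)$, with no gain; the fact that $H$ is zero- and pole-free up to $S(r,f)$ controls $N(r,H)+N(r,1/H)$ but says nothing about $m(r,H)$, so it cannot improve that trivial bound; and the zeros of $\varphi$ you located sit at multiple zeros of $f$, not at $a$-points of $f$, so they give no leverage on $\ol N\left(r,\tfrac{1}{f-a}\right)$ either. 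Note also that your plan invokes the hypothesis $\delta(0,f)>\tfrac34$ only once and then needs an essentially unconditional sub-$\tfrac34 T(r,f)$ bound on the reduced counting function of $a$-points, which CM sharing alone does not supply; the fact that Liu--Gu later lowered the threshold to $\tfrac12$ already indicates that $\tfrac34$ is an artifact of how often the deficiency term appears in a chain of inequalities, not a sharp bound on $\ol N\left(r,\tfrac{1}{f-a}\right)$.

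For a workable endgame, follow the route this line of papers (and the present paper itself, for its Theorems 1.1--1.3) actually uses: pass to $\mathfrak{F}=f/a$, $\mathfrak{G}=f^{(k)}/a$ and the \emph{second-order} auxiliary function $\mathfrak{H}$ of (2.2) rather than your first-order $\varphi=-H'/H$. If $\mathfrak{H}\not\equiv 0$, a Yi-type lemma (Lemma 2.6 with $m=\infty$) bounds $T(r,\mathfrak{G})$ by $N_2$-counting functions of the zeros and poles of $\mathfrak{F}$ and $\mathfrak{G}$; after converting $N_2\left(r,\tfrac{1}{f^{(k)}}\right)$ back to zeros of $f$ (Lemma 2.3-type estimates), the hypothesis on $\delta(0,f)$ enters through several $N\left(r,\tfrac{1}{f}\right)$-type terms at once and yields the contradiction -- this is where the numerical constant comes from. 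If $\mathfrak{H}\equiv 0$, integrating twice gives a Möbius relation between $\mathfrak{F}$ and $\mathfrak{G}$, and a case analysis (as in Section 3 of the paper) eliminates everything except $\mathfrak{F}\equiv\mathfrak{G}$, i.e. $f\equiv f^{(k)}$. As it stands, your submission is a correct reduction plus a conjecture at the crucial step, not a proof.
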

\begin{theoB}\cite{Yu & JIPAM & 2003}
    Let $ k $ be a positive positive integer, and $ f $ be a non-constant meromorphic function, $ a\in S(f) $ and $ a\not\equiv 0, \infty $, $ f $ and $ a $ do not have any common pole. If $ f $ and $ f^{(k)} $ share $ a $ $ CM $ and \beas 4\delta(0,f)+2(k+8)\Theta(\infty,f)>2k+19, \eeas then $ f\equiv f^{(k)} $
\end{theoB}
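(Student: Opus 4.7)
The plan is to argue by contradiction: assume $f\not\equiv f^{(k)}$ and derive a contradiction with the numerical hypothesis $4\delta(0,f) + 2(k+8)\Theta(\infty,f) > 2k+19$. The CM sharing condition naturally suggests the auxiliary function
\[
\phi = \frac{(f^{(k)} - a)'}{f^{(k)} - a} - \frac{(f - a)'}{f - a},
\]
for which the logarithmic derivative lemma yields $m(r,\phi) = S(r,f)$. Since $f$ and $f^{(k)}$ share $a$ CM, every common $a$-point becomes a removable singularity of $\phi$, so contributions to $N(r,\phi)$ arise only from poles of $f$ (poles and zeros of $a$ contribute only $S(r,f)$, which is where the assumption that $f$ and $a$ have no common pole enters). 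A local residue computation at a pole of $f$ of multiplicity $p$ shows that $\phi$ has a simple pole there with residue $-k$, yielding $T(r,\phi) \le \ol N(r,f) + S(r,f)$.

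First I would dispose of the degenerate case $\phi \equiv 0$. Integrating gives $f^{(k)} - a = c(f - a)$ for a nonzero constant $c$. If $c = 1$ this is precisely $f \equiv f^{(k)}$, contradicting our standing assumption; if $c \neq 1$, the identity $f^{(k)} = cf + (1-c)a$ forces $T(r, f^{(k)}) = T(r,f) + S(r,f)$, and Nevanlinna's second fundamental theorem applied to $f$ at the targets $0$, $a$, $\infty$, together with the deficiency and ramification hypotheses, should close this subcase directly.

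In the main case $\phi \not\equiv 0$, the strategy is to combine the bound $T(r,\phi) \le \ol N(r,f) + S(r,f)$ with a Milloux-type inequality of the form
\[
T(r,f) \le \ol N(r,f) + N_{k+1}\!\left(r,\tfrac{1}{f}\right) + \ol N\!\left(r,\tfrac{1}{f^{(k)} - a}\right) - N_0\!\left(r,\tfrac{1}{f^{(k+1)}}\right) + S(r,f),
\]
and the CM identity $\ol N(r,1/(f-a)) = \ol N(r,1/(f^{(k)}-a))$. The key manoeuvre is to express $1/(f-a)$ in terms of $\phi$ and lower-order data, so that $\ol N(r,1/(f-a))$ is absorbed into a controlled multiple of $\ol N(r,f) + S(r,f)$. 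Finally, inserting the standard estimates $\ol N(r,f) \le (1-\Theta(\infty,f))T(r,f) + S(r,f)$ and $N_{k+1}(r,1/f) \le (1-\delta_{k+1}(0,f))T(r,f) + S(r,f) \le (1-\delta(0,f))T(r,f) + S(r,f)$, and rearranging, I expect to obtain a linear inequality in $\delta(0,f)$ and $\Theta(\infty,f)$ whose failure is exactly the numerical hypothesis of the theorem.

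The principal obstacle will be recovering the precise coefficients $4$, $2(k+8)$, and $2k+19$ in the final inequality. This requires meticulous bookkeeping of multiplicities at every stage---particularly of pole contributions, since each differentiation increases a pole's order by one and so the $k$-dependence is amplified through every inequality---and a careful choice of auxiliary functions so that no counting term is over- or underestimated. The factor $2(k+8)$ in front of $\Theta(\infty,f)$ suggests the argument passes through two distinct applications of the Nevanlinna machinery, each contributing a term linear in $k$, which will need to be balanced with some care.
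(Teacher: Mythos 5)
You should first note that this paper does not prove Theorem B at all: it is quoted verbatim from Yu's 2003 paper as background, so there is no in-paper argument to match, and the closest related machinery in the paper (the function $\mathfrak{H}$ in (\ref{e2.2}) together with Lemmas \ref{lem2.3}--\ref{lem2.6}) is used only for the new Theorems \ref{th1.1}--\ref{th1.3}. Judged on its own terms, your outline gets the easy parts right: $m(r,\phi)=S(r,f)$, the cancellation of simple poles of the two logarithmic derivatives at shared $a$-points (this is exactly where CM sharing is used), the residue $-k$ at poles of $f$ giving $T(r,\phi)\leq \ol N(r,f)+S(r,f)$, and the degenerate case $\phi\equiv 0$, $f^{(k)}-a=c(f-a)$ (which in fact closes even more simply: $(1-c)a/f=f^{(k)}/f-c$ gives $m(r,1/f)=S(r,f)$, hence $\delta(0,f)=0$, and then $4\delta(0,f)+2(k+8)\Theta(\infty,f)\leq 2k+16<2k+19$).

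The genuine gap is the step you yourself call the ``key manoeuvre'': absorbing $\ol N\left(r,\frac{1}{f-a}\right)$ into a bounded multiple of $\ol N(r,f)+S(r,f)$ by ``expressing $1/(f-a)$ in terms of $\phi$''. Your $\phi$ is merely analytic at the common $a$-points --- nothing forces it to vanish there --- so neither $N(r,1/\phi)$ nor $T(r,\phi)$ gives any control of the $a$-point counting function; the trick of dominating shared-point counting functions by an auxiliary function works only when that function is built so that the shared points are among its zeros (as in Mues--Steinmetz/Lahiri-type constructions, or the $\mathfrak{H}$-lemma used in this paper). Worse, the target estimate is false as stated: for an entire $f$ it would say $\ol N\left(r,\frac{1}{f-a}\right)=S(r,f)$, i.e.\ the shared small function is essentially Picard exceptional, which fails for typical Br\"uck-type examples of entire $f$ sharing $a$ CM with $f^{(k)}$. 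The correct route (Yu's, and the one echoed in Theorems C--G here) bounds $\ol N\left(r,\frac{1}{f-a}\right)=\ol N\left(r,\frac{1}{f^{(k)}-a}\right)$ through a second application of the second main theorem/Milloux to $f^{(k)}$, which injects $N(r,1/f)$ and $k\ol N(r,f)$ terms; that is precisely how both $\delta(0,f)$ and the $k$-dependent coefficient of $\Theta(\infty,f)$ enter, and it is why the coefficients $4$ and $2(k+8)$ cannot be recovered from your scheme by ``bookkeeping'' alone --- the quantitative heart of the proof is what is missing.
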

\par In the same paper, \emph{Yu} \cite{Yu & JIPAM & 2003} posed the following open questions on which many researchers investigated and later established results by answering them.
\begin{enumerate}
    \item[(i).] Can a $ CM $ shared value be replaced by an $ IM $ shared value in \emph{Theorem A} ?
    \item[(ii).] Is the condition $ \delta(0,f)>\frac{3}{4} $ sharp in \emph{Theorem A} ?
    \item[(iii).] Is the condition $ 4\delta(0,f)+2(k+8)\Theta(\infty,f)>2k+19 $ sharp in \emph{Theorem B} ?
    \item[(iv).] Can the condition, ``$ f $ and $ a $ do not have any common pole", be deleted in \emph{Theorem B} ?
\end{enumerate}\par In $ 2004 $, \emph{Liu - Gu} \cite{Liu & Gu & KMJ & 2004} applied a different method of proof, and obtained the following results.
\begin{theoC}\cite{Liu & Gu & KMJ & 2004}
    Let $ f $ be a non-constant meromorphic function, $ a\in S(f) $ and $ a\not\equiv 0, \infty $. If $ f $ and $ f^{(k)} $ share the value $ a $ $ CM $, and $ f $ and $ a $ do not have any common pole of same multiplicity and $ 2\delta(0,f)+4\Theta(\infty,f)>5 $, then $ f\equiv f^{(k)} $.
\end{theoC}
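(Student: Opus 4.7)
The plan is to introduce the auxiliary function
\[
\beta\;=\;\frac{f^{(k)}-a}{f-a}
\]
and to prove that $\beta\equiv 1$, which is equivalent to the desired conclusion $f\equiv f^{(k)}$. Since $f$ and $f^{(k)}$ share $a$ CM, every common $a$-point of $f-a$ and $f^{(k)}-a$ cancels and contributes neither a zero nor a pole to $\beta$. The extra hypothesis that $f$ and $a$ share no pole of the same multiplicity forces, at each pole $z_0$ of $f$ of order $p$, the orders of $f^{(k)}-a$ and $f-a$ at $z_0$ to differ by exactly $k$ (contributions from poles of $a$ where $f$ is regular lie in $S(r,f)$ because $a\in S(f)$). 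Combined with the logarithmic-derivative estimate $m(r,\beta)=S(r,f)$, this yields
\[
T(r,\beta)\;\leq\; k\,\overline{N}(r,f)+S(r,f).
\]

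Now suppose for contradiction that $\beta\not\equiv 1$. The identity $f^{(k)}-f=(\beta-1)(f-a)$ shows that every common $a$-point of $f$ and $f^{(k)}$ is a zero of $f^{(k)}-f$, so
\[
\overline{N}\!\left(r,\tfrac{1}{f-a}\right)\;\leq\; N\!\left(r,\tfrac{1}{f^{(k)}-f}\right)+S(r,f).
\]
I would bound the right-hand side by writing $\dfrac{f^{(k)}-f}{f}=\dfrac{f^{(k)}}{f}-1$ and invoking the logarithmic-derivative lemma on the proximity side, while on the counting side I analyze the poles of $f^{(k)}/f$ (controlled by $\overline{N}(r,f)$ and $\overline{N}(r,1/f)$ with multiplicities up to $k$). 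Crucially, the CM assumption matches the multiplicities of the $a$-points of $f$ and $f^{(k)}$, which lets me convert truncated counting functions of type $N_2$ into ordinary counting functions at half cost; this is what produces the factor $\tfrac{1}{2}$ needed for the sharp coefficients.

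The endgame is to feed this refined estimate into Nevanlinna's second fundamental theorem applied to $f$ at the three targets $0,a,\infty$,
\[
T(r,f)\;\leq\;\overline{N}(r,f)+\overline{N}\!\left(r,\tfrac{1}{f}\right)+\overline{N}\!\left(r,\tfrac{1}{f-a}\right)+S(r,f),
\]
and collect terms. Dividing by $T(r,f)$, passing to the $\limsup$, and rewriting the result in terms of $\delta(0,f)$ and $\Theta(\infty,f)$ should reduce precisely to $2\delta(0,f)+4\Theta(\infty,f)\leq 5$, contradicting the hypothesis and forcing $\beta\equiv 1$. The main technical obstacle is exactly this bookkeeping: the naive estimate $N(r,1/(f^{(k)}-f))\leq T(r,f)+T(r,f^{(k)})+O(1)\leq 2T(r,f)+k\,\overline{N}(r,f)+S(r,f)$ is far too weak, so one must split the zeros of $f^{(k)}-f$ into those arising from shared $a$-points (where CM governs the multiplicities exactly) and the rest, and it is the careful use of the CM multiplicity equality together with the growth bound on $\beta$ that produces precisely the coefficients $2$ and $4$ appearing in the hypothesis.
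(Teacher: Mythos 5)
Your auxiliary function $\beta=\dfrac{f^{(k)}-a}{f-a}$ is a genuinely different starting point from the machinery this paper (and Liu--Gu, whose theorem this is; the paper only quotes it) actually uses, namely $\mathfrak{F}=P(f)/a$, $\mathfrak{G}=[P(f)]^{(k)}/a$, the function $\mathfrak{H}$ of (\ref{e2.2}), and the second-main-theorem-type Lemma \ref{lem2.6} with $N_2$-counting functions. Unfortunately your route breaks at its first quantitative step: the claim $m(r,\beta)=S(r,f)$ is not a logarithmic-derivative estimate, because $\beta$ is not a logarithmic derivative. Writing $f^{(k)}-a=(f-a)^{(k)}+\bigl(a^{(k)}-a\bigr)$ gives $\beta=\dfrac{(f-a)^{(k)}}{f-a}+\dfrac{a^{(k)}-a}{f-a}$; the first summand indeed has small proximity, but for the second one only gets $m\Bigl(r,\dfrac{a^{(k)}-a}{f-a}\Bigr)\geq m\Bigl(r,\dfrac{1}{f-a}\Bigr)-S(r,f)$ whenever $a^{(k)}\not\equiv a$ (for instance $a$ a nonzero constant), hence $m(r,\beta)\geq m\bigl(r,\tfrac{1}{f-a}\bigr)-S(r,f)$. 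Nothing in the hypotheses bounds $m\bigl(r,\tfrac{1}{f-a}\bigr)$ before the conclusion is established: CM sharing only identifies the zeros of $f-a$ with those of $f^{(k)}-a$, it does not force $f-a$ to have many zeros. So the key bound $T(r,\beta)\leq k\,\overline{N}(r,f)+S(r,f)$ (only its $N$-part is justified) is unproved, and the whole contradiction argument rests on it.

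Even granting that bound, the endgame is a sketch rather than a proof. The decisive step — ``convert truncated counting functions of type $N_2$ into ordinary counting functions at half cost'' — is asserted without any mechanism, and if one executes the chain you outline ($\overline{N}\bigl(r,\tfrac{1}{f-a}\bigr)\leq N\bigl(r,\tfrac{1}{f^{(k)}-f}\bigr)$, then estimate via $f^{(k)}/f$), the pole term comes out with a coefficient growing like $k+1$, yielding a condition of the shape $c_1\delta(0,f)+(k+c_2)\Theta(\infty,f)>c_3$ rather than the $k$-independent $2\delta(0,f)+4\Theta(\infty,f)>5$. Producing exactly the coefficients $2$ and $4$ is precisely what the $\mathfrak{H}$-function argument buys (Lemma \ref{lem2.6}(i) applied when $\mathfrak{H}\not\equiv 0$, then integration of $\mathfrak{H}\equiv 0$ and a case analysis, including the separate exclusion of $\mathfrak{F}\mathfrak{G}\equiv 1$), and your proposal contains no substitute for it. To salvage your approach you would need, at minimum, a proof that $m(r,\beta)=S(r,f)$ under the stated hypotheses and an explicit bookkeeping argument replacing the ``half cost'' claim; as written, both are missing.
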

\begin{theoD}\cite{Liu & Gu & KMJ & 2004}
    Let $ f $ be a non-constant entire function, $ a\in S(f) $ and $ a\not\equiv 0, \infty $. If $ f $ and $ f^{(k)} $ share the value $ a $ $ CM $, and $ \delta(0,f)>\frac{1}{2} $, then $ f\equiv f^{(k)} $.
\end{theoD}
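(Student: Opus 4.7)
The plan is to suppose for contradiction that $f \not\equiv f^{(k)}$ and derive a contradiction from the hypothesis $\delta(0,f) > 1/2$. Since $f$ and $f^{(k)}$ are both entire and share the small function $a$ CM, the quotient $e^{\gamma} := (f^{(k)} - a)/(f - a)$ is a meromorphic function with no zeros and no poles, so $\gamma$ may be taken entire; the contradictory assumption then becomes $e^{\gamma} \not\equiv 1$. The master equation
\[
f^{(k)} = e^{\gamma} f + (1 - e^{\gamma}) a, \qquad\text{equivalently}\qquad f^{(k)} - f = (e^{\gamma} - 1)(f - a),
\]
is the basic identity from which everything flows.

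I would then introduce the auxiliary function
\[
\varphi := \frac{(f^{(k)} - a)'}{f^{(k)} - a} - \frac{(f - a)'}{f - a} = \gamma',
\]
which is entire and satisfies $T(r, \varphi) = m(r, \varphi) = S(r, f)$ by the lemma of logarithmic derivatives. Two quantitative estimates are at the core of the argument. First, an upper bound on $T(r, e^{\gamma})$: factoring $e^{\gamma} - 1 = (f^{(k)}/f - 1) \cdot f/(f - a)$ and applying the log-derivative lemma to $f^{(k)}/f$ should yield
\[
T(r, e^{\gamma}) \leq T(r, f) - N(r, 1/(f - a)) + S(r, f).
\]
Second, a lower bound on $T(r, f)$: solving the master equation for $1/f$ in the form $1/f = (f^{(k)}/f - e^{\gamma})/((1 - e^{\gamma})a)$ and using $m(r, f^{(k)}/f) = S(r, f)$ should give
\[
T(r, f) \leq N(r, 1/f) + 2 T(r, e^{\gamma}) + S(r, f).
\]

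Next I would invoke Nevanlinna's second main theorem applied to $f$ at the small targets $0$ and $a$ (using $\overline N(r, f) = 0$ since $f$ is entire), giving $T(r, f) \leq \overline N(r, 1/f) + \overline N(r, 1/(f - a)) + S(r, f)$, together with the deficiency hypothesis $\delta(0, f) > 1/2$, which provides $N(r, 1/f) \leq (1/2 - \epsilon) T(r, f)$ for some $\epsilon > 0$ and all sufficiently large $r$. The goal is to assemble these ingredients into an inequality $T(r, f) \leq (1 - \eta) T(r, f) + S(r, f)$ with $\eta > 0$, which is impossible for non-constant $f$.

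The main obstacle will be calibrating the estimates so that the threshold $\delta(0,f) > 1/2$ is exactly tight: a naive combination of the bounds above only yields a weaker threshold such as $\delta(0, f) > 2/3$, and closing the gap requires sharper ingredients. I anticipate that the key refinement comes from using a Milloux-type inequality such as $T(r, f) \leq N_k(r, 1/f) + \overline N(r, 1/(f^{(k)} - a)) + S(r, f)$ in place of (or alongside) the plain second main theorem, together with the CM identity $\overline N(r, 1/(f^{(k)} - a)) = \overline N(r, 1/(f - a))$, and from exploiting $\varphi = \gamma'$ to bound the discrepancy $N(r, 1/f) - N_k(r, 1/f)$ (the contribution from zeros of $f$ of order $\geq k+1$) in terms of $T(r, e^{\gamma})$, since at each such zero the master equation forces $e^{\gamma} = 1$. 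Handling $a$ as a small function rather than a constant should be routine, because $T(r, a)$, $N(r, 1/a)$, and $N(r, a)$ can all be absorbed into $S(r, f)$.
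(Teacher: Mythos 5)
Theorem D is quoted background from Liu--Gu: this paper never proves it (its own generalizations, e.g.\ Theorem 1.1 specialized to $P(f)=f$ with $f$ entire, are established via the Lin--Lin weakly-weighted-sharing machinery, the auxiliary function $\mathfrak{H}$ and Lemma 2.6, which is a genuinely different route from your $e^{\gamma}$-based argument), so your proposal has to be judged on its own. As written it has a genuine gap, which you yourself acknowledge: your two main estimates, $T(r,e^{\gamma})\le T(r,f)-N\left(r,\frac{1}{f-a}\right)+S(r,f)$ and $T(r,f)\le N\left(r,\frac{1}{f}\right)+2T(r,e^{\gamma})+S(r,f)$, combined with the second main theorem for the small targets $0,a,\infty$, only give $T(r,f)\le 3N\left(r,\frac{1}{f}\right)+S(r,f)$, i.e.\ the threshold $\delta(0,f)>\frac{2}{3}$, not $\frac{1}{2}$. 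Moreover the repair you sketch does not visibly close the gap: the Milloux-type inequality $T(r,f)\le N_{k+1}\left(r,\frac{1}{f}\right)+\overline{N}\left(r,\frac{1}{f^{(k)}-a}\right)+S(r,f)$ together with the sharing hypothesis again only yields $T(r,e^{\gamma})\le N_{k+1}\left(r,\frac{1}{f}\right)+S(r,f)\le N\left(r,\frac{1}{f}\right)+S(r,f)$, so the factor $2$ in your lower bound still produces $3N\left(r,\frac{1}{f}\right)$; and the observation that $e^{\gamma}=1$ at zeros of $f$ of multiplicity at least $k+1$ controls only $\overline{N}_{(k+1}\left(r,\frac{1}{f}\right)$, not the discrepancy $N\left(r,\frac{1}{f}\right)-N_{k+1}\left(r,\frac{1}{f}\right)$ you would actually need.

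The missing idea is a sharper proximity estimate in your lower-bound step, and it is a one-line fix rather than extra machinery. From $f^{(k)}=e^{\gamma}f+(1-e^{\gamma})a$ you get $\frac{a}{f}=\frac{f^{(k)}/f-e^{\gamma}}{1-e^{\gamma}}=1+\frac{f^{(k)}/f-1}{1-e^{\gamma}}$, hence $m\left(r,\frac{1}{f}\right)\le m\left(r,\frac{f^{(k)}}{f}-1\right)+m\left(r,\frac{1}{1-e^{\gamma}}\right)+S(r,f)\le T(r,e^{\gamma})+S(r,f)$, i.e.\ $T(r,f)\le N\left(r,\frac{1}{f}\right)+T(r,e^{\gamma})+S(r,f)$ with coefficient $1$ instead of $2$. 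Now the second main theorem $T(r,f)\le\overline{N}\left(r,\frac{1}{f}\right)+\overline{N}\left(r,\frac{1}{f-a}\right)+S(r,f)$ (recall $\overline{N}(r,f)=0$ since $f$ is entire) combined with your first estimate gives $T(r,e^{\gamma})\le\overline{N}\left(r,\frac{1}{f}\right)+S(r,f)$, and substituting back yields $T(r,f)\le 2N\left(r,\frac{1}{f}\right)+S(r,f)$, which contradicts $\delta(0,f)>\frac{1}{2}$; hence $e^{\gamma}\equiv 1$ and $f\equiv f^{(k)}$. With this correction your skeleton (the CM factorization $e^{\gamma}=(f^{(k)}-a)/(f-a)$, which is indeed zero-free and pole-free even where $a$ has poles, or at worst has $N(r,\cdot)+N\left(r,\frac{1}{\cdot}\right)=S(r,f)$ under the small-function definition of CM sharing, plus your two growth estimates) does prove Theorem D at the stated threshold, and neither the Milloux inequality nor the deep-zero bookkeeping is needed.
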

\par In $ 2006 $, \emph{Lin - Lin} \cite{Lin & Lin & KMJ & 2006} introduced the following notion of weakly weighted sharing of values which is a scaling between $ CM $ and $ IM $ sharing.  Let $ f $ and $ g $ be two non-constant meromorphic functions sharing $ a $ $ IM $. For $ a\in S(f)\cap S(g) $ and a positive integer $ k $ or $ \infty $,
\begin{enumerate}
    \item[(i).] $ \ol N^{E}_{k)}(r,a) $ denotes the counting function of those $ a $-points of $ f $ whose multiplicities are equal to the corresponding   $ a $-points of $ g $, both of their multiplicities are not greater than $ k $, where each $ a $-point is counted only once.
    \item[(ii).] $ \ol N^{0}_{(k}(r,a) $ denotes the reduced counting function of those $ a $-points of $ f $ which are $ a $-points og $ g $, both of their multiplicities are not less than $ k $, where each $ a $-point is counted only once.
\end{enumerate}
\begin{defi}\cite{Lin & Lin & KMJ & 2006}
    For $ a\in S(f)\cap S(g) $, if $ k $ be a positive integer or $ \infty $, and \beas && \ol N_{k)}(r,a;f)+\ol N_{k)}(r,a;g)-2\ol N^{E}_{k)}(r,a)=S(r,f)+S(r,g)\\ && \ol N_{(k+1}(r,a;f)+\ol N_{(+1}(r,a;g)-2\ol N^{0}_{(k+1}(r,a)=S(r,f)+S(r,g) \eeas or, if $ k=0 $ and \beas \ol N(r,a;f)+\ol N(r,a;g)-2\ol N_0(r,a)=S(r,f)+S(r,g), \eeas then we say that $ f $ and $ g $ weakly share $ a $ with weight $ k $. Here, we write, $ f $ and $ g $ share $ (a,k) $ to mean that $ f $ and $ g $ share the value $ a $ weakly with weight $ k $. \par Obviously if $ f $ and $ g $ share $ (a, k) $, then $ f $ and $ g $ share $ (a,p) $ for any $ p\; (0\leq p\leq k) $. Also, we note that $ f $ and $ g $ share $ a $ $ IM $ or $ CM $ if and only
    if $ f $ and $ g $ share $ (a,0) $ or $ (a,\infty) $, respectively.
    Suppose $ \mathcal{F} $ and $ \mathcal{G} $ share $ 1 $ $ IM $. By  $N_L(r,1; \mathcal{F})$ we denotes the counting function of the $ 1 $-points of $ \mathcal{F} $ whose multiplicities are greater than $ 1 $-points of $ \mathcal{G} $, $N_L(r,1; \mathcal{G})$ is defined similarly.
\end{defi}
\par With the help of the notion of weakly weighted sharing, \emph{Lin - Lin} \cite{Lin & Lin & KMJ & 2006} investigated the uniqueness problem between a meromorphic function $ f $ and its $ k $th derivative  sharing a small function, and proved  results as follows.
\begin{theoE}\cite{Lin & Lin & KMJ & 2006}
    Let $ k\geq 1 $ and $ 2\leq m\leq \infty $. Let $ f $ be a non-constant meromorphic function, $ a\in S(f) $ and $ a\not\equiv 0, \infty $. If $ f-a $ and $ f^{(k)}-a $ share $ (0,m) $  and \beas 2\delta_{k+2}(0,f)+4\Theta(\infty,f)>5, \eeas then $ f\equiv f^{(k)} $.
\end{theoE}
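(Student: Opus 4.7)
The plan is to argue by contradiction: assume $f\not\equiv f^{(k)}$ and derive a contradiction with $2\delta_{k+2}(0,f)+4\Theta(\infty,f)>5$. Since weight-$m$ sharing with $m\ge 2$ matches the multiplicities of $f-a$ and $f^{(k)}-a$ exactly at almost every low-multiplicity common zero, the overall strategy will mirror the $CM$ argument of Liu--Gu (Theorem C), with the error at high-multiplicity common zeros controlled via the two identities in the definition of weakly weighted sharing.

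I would normalize by $F=f/a$ and $G=f^{(k)}/a$, so that $F-1$ and $G-1$ share $0$ weakly with weight $m$, and then work with the Lahiri-type auxiliary function
$$H=\left(\frac{F''}{F'}-\frac{2F'}{F-1}\right)-\left(\frac{G''}{G'}-\frac{2G'}{G-1}\right).$$
A residue calculation at a $1$-point of multiplicity $p$ shows each bracket has a simple pole of residue $-(p+1)$ there, so the two brackets cancel whenever $F$ and $G$ share a $1$-point with the same multiplicity. The small-function logarithmic derivative lemma gives $m(r,H)=S(r,f)$, and the poles of $H$ come only from multiple poles of $F$ or $G$ (absorbed by $\overline{N}(r,f)$), from zeros of $F'$ or $G'$ not at zeros of $F(F-1)$ or $G(G-1)$, and from $1$-points with mismatched multiplicities. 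The latter are controlled by the weight-$m$ identities
$$\overline{N}_{m)}(r,1;F)+\overline{N}_{m)}(r,1;G)-2\overline{N}^{E}_{m)}(r,1)=S(r,f),$$
$$\overline{N}_{(m+1}(r,1;F)+\overline{N}_{(m+1}(r,1;G)-2\overline{N}^{0}_{(m+1}(r,1)=S(r,f),$$
together with standard $L$-type estimates that bound the reduced counts of mismatched multiplicities in terms of zeros of derivatives.

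I will then split according to whether $H\equiv 0$. If $H\not\equiv 0$, every simple common $1$-point of $F$ and $G$ is a zero of $H$, whence
$$\overline{N}(r,1;F)\le T(r,H)+O(1)=N(r,H)+S(r,f).$$
Combining this with the second main theorem applied to $f$ at $0,a,\infty$, with the Milloux-type inequality $N_2(r,1/f^{(k)})\le N_{k+2}(r,1/f)+k\,\overline{N}(r,f)+S(r,f)$, and with the weak sharing (used to transfer information between $\overline{N}(r,1/(f-a))$ and $\overline{N}(r,1/(f^{(k)}-a))$), a careful accounting yields
$$T(r,f)\le 2N_{k+2}\!\left(r,\frac{1}{f}\right)+4\,\overline{N}(r,f)+S(r,f).$$
Using the bounds $N_{k+2}(r,1/f)\le(1-\delta_{k+2}(0,f)+\varepsilon)T(r,f)$ and $\overline{N}(r,f)\le(1-\Theta(\infty,f)+\varepsilon)T(r,f)$ for arbitrary $\varepsilon>0$ and large $r$ outside an exceptional set, this rearranges to
$$\bigl(2\delta_{k+2}(0,f)+4\Theta(\infty,f)-5\bigr)T(r,f)\le 6\varepsilon\,T(r,f)+S(r,f),$$
contradicting the hypothesis. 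If instead $H\equiv 0$, two integrations produce a Möbius-type identity
$$\frac{1}{F-1}=\frac{A}{G-1}+B$$
with constants $A,B$. Substituting $F=f/a$, $G=f^{(k)}/a$ and comparing characteristics via $T(r,f^{(k)})\le(k+1)T(r,f)+S(r,f)$, each possibility with $(A,B)\ne(1,0)$ either forces $a$ to have Picard-type exceptional behavior incompatible with $a\not\equiv 0,\infty$ or contradicts the deficiency hypothesis; only $(A,B)=(1,0)$ survives, and this gives $F\equiv G$, i.e.\ $f\equiv f^{(k)}$, contradicting the standing assumption.

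The main obstacle lies in the $H\not\equiv 0$ case: the two weight-$m$ identities must be translated into sufficiently sharp bounds on the $L$-type reduced counting functions $\overline{N}_L(r,1;F),\overline{N}_L(r,1;G)$ and the mixed-multiplicity terms appearing in $N(r,H)$, so that the target coefficients are exactly $2$ and $4$. The remaining ingredients---the small-function logarithmic derivative lemma, the standard Milloux-type estimates, and the second main theorem for small functions---are all routine and contribute only $S(r,f)$.
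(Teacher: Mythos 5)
Your proposal follows essentially the same route as the paper's own argument (its proof of Theorem \ref{th1.1}, which contains Theorem E as the case $P(f)=f$, $n=1$): normalize to $\mathfrak{F}=f/a$, $\mathfrak{G}=f^{(k)}/a$, form the Lahiri-type function $\mathfrak{H}$, and in the case $\mathfrak{H}\not\equiv 0$ combine the $N_2$-type second main theorem inequality with the Milloux estimates to reach $T(r,f)\le 2N_{k+2}\left(r,\frac{1}{f}\right)+4\ol N(r,f)+S(r,f)$, while in the case $\mathfrak{H}\equiv 0$ the double integration and the M\"obius-coefficient case analysis force $\mathfrak{F}\equiv\mathfrak{G}$. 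The step you single out as the main obstacle is exactly the content of Lemma \ref{lem2.6}(i) (quoted from Lin--Lin), which for weight $m\ge 2$ already comes without any $\ol N_L$-terms, so your accounting with target coefficients $2$ and $4$ goes through as in the paper.
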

\begin{theoF}\cite{Lin & Lin & KMJ & 2006}
    Let $ k\geq 1 $, and  $ f $ be a non-constant meromorphic function, $ a\in S(f) $ and $ a\not\equiv 0, \infty $. If $ f-a $ and $ f^{(k)}-a $ share $ (0,1) $  and \beas 5\delta_{k+2}(0,f)+(k+9)\Theta(\infty,f)>k+12, \eeas then $ f\equiv f^{(k)} $.
\end{theoF}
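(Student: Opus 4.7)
The plan is to follow the standard auxiliary-function method adapted to the weakly weighted sharing $(0,1)$ regime. First, normalize by setting $F := f/a$ and $G := f^{(k)}/a$; since $a \in S(f)$ and $T(r, f^{(k)}) = O(T(r,f))$, we have $S(r,F) = S(r,G) = S(r,f)$, and the hypothesis becomes ``$F$ and $G$ share $(1,1)$'' modulo $S(r,f)$. Introduce the standard auxiliary function
\[ H \;=\; \left(\frac{F''}{F'} - \frac{2F'}{F-1}\right) - \left(\frac{G''}{G'} - \frac{2G'}{G-1}\right), \]
and dichotomize on whether $H \equiv 0$.

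In the case $H \equiv 0$, integrate twice to obtain a M\"obius-type relation $\frac{1}{F-1} = \frac{A}{G-1} + B$ with constants $A \neq 0$ and $B$. Enumerating the subcases $(A,B) = (1,0)$, $B = 0$ with $A \neq 1$, $B \neq 0$ with $A = B$, and $B \neq 0$ with $A \neq B$, one applies Nevanlinna's second main theorem to $F$ or $G$ together with the Milloux-type inequality $N_p\bigl(r, 1/f^{(k)}\bigr) \leq N_{k+p}\bigl(r, 1/f\bigr) + k\,\ol N(r, f) + S(r,f)$ to rule out each subcase except $(A,B) = (1,0)$. The deficiency hypothesis supplies the necessary sign of each combined inequality. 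The surviving subcase forces $F \equiv G$, i.e., $f \equiv f^{(k)}$.

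In the case $H \not\equiv 0$, every common simple $1$-point of $F$ and $G$ is a zero of $H$, so $\ol N^{E}_{1)}(r,1) \leq N(r,H) + S(r,f)$. I would then classify the poles of $H$: they appear only at zeros of $F'$ or $G'$ away from $1$-points, at multiple poles of $F$ and $G$, at common $1$-points of multiplicity at least two (contributing via $N_L(r,1;F)$, $N_L(r,1;G)$, and $\ol N^{0}_{(2}(r,1)$), and at $1$-points of one that are not $1$-points of the other -- the latter being $S(r,f)$ by the weight-$1$ hypothesis. Next, apply the second main theorem to $G$ at the targets $0, 1, \infty$, use the splitting
\[ \ol N\left(r, \tfrac{1}{G-1}\right) = \ol N^{E}_{1)}(r,1) + N_L(r,1;F) + N_L(r,1;G) + \ol N^{0}_{(2}(r,1) + S(r,f), \]
and invoke the Milloux bounds for $N_{k+2}(r, 1/f)$ and $\ol N(r, G)$. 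Converting the remaining counts $\ol N(r, 1/f)$ and $\ol N(r,f)$ into $T(r,f)$-bounds via $\delta_{k+2}(0,f)$ and $\Theta(\infty,f)$, the inequality should reduce, after algebra, to $5\,\delta_{k+2}(0,f) + (k+9)\,\Theta(\infty,f) \leq k + 12$, contradicting the hypothesis.

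The principal obstacle will be the precise bookkeeping in this last step: the coefficients $5$ and $k+9$ have to come out \emph{exactly}, not merely up to a constant. This requires using sharp estimates such as $N_L(r,1;F) \leq \ol N(r,1/F') - N_{0}(r,1/F') + S(r,f)$ (and analogously for $G$), where $N_0$ denotes the part not meeting $1$-points, and absorbing $\ol N^{0}_{(2}(r,1)$ through the Nevanlinna splitting rather than bounding it crudely. The weight-$1$ condition is strictly weaker than the weight-$(\geq 2)$ condition of Theorem E, which is exactly why the larger constants $5$ and $k+9$ (versus $2$ and $4$) arise here; tracking them without loss is the delicate crux of the argument.
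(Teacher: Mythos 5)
Your overall route is the same as the paper's: the paper does not reprove Theorem F (it quotes it from Lin--Lin), but its Theorem \ref{th1.2}, specialized to $P(f)=f$, $n=1$, is exactly this statement, and its proof uses precisely your scheme --- normalize $F=f/a$, $G=f^{(k)}/a$, form $H$ as in (\ref{e2.2}), split on $H\equiv 0$ versus $H\not\equiv 0$, and in the latter case combine a Lemma \ref{lem2.6}(ii)-type second-main-theorem inequality for weight-$1$ sharing with the Milloux-type bounds (Lemmas \ref{lem22.2}, \ref{lem2.3}) and the factor $\tfrac12$ in the estimate of $\overline N_L\left(r,\frac{1}{G-1}\right)$, which is exactly where the constants $5$ and $k+9$ come from. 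That you re-derive the weighted-sharing inequality by classifying the poles of $H$ rather than quoting it is immaterial.

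There is, however, a genuine gap in your $H\equiv 0$ branch. After the M\"obius relation $\frac{1}{F-1}=\frac{A}{G-1}+B$, you propose to eliminate every subcase except $(A,B)=(1,0)$ by the second main theorem at shifted targets, with ``the deficiency hypothesis supplying the necessary sign''. This recipe breaks down for $A=B=-1$ (hidden inside your subcase ``$B\neq 0$ with $A=B$''), where the relation degenerates to $FG\equiv 1$, i.e.\ $f\,f^{(k)}\equiv a^{2}$. In that situation every zero and every pole of $f$ must sit on a zero or pole of $a$, so $N\left(r,\frac{1}{f}\right)=S(r,f)$ and $N(r,f)=S(r,f)$; hence $\delta_{k+2}(0,f)=\Theta(\infty,f)=1$ and $5\delta_{k+2}(0,f)+(k+9)\Theta(\infty,f)=k+14>k+12$, so the hypothesis is \emph{satisfied}, not violated, and no deficiency bookkeeping can yield the contradiction you need. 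One must instead show the identity $f f^{(k)}\equiv a^{2}$ is itself impossible for non-constant $f$: for instance, $2T(r,f)=m\left(r,f^{2}\right)+S(r,f)\leq T\left(r,\frac{f}{f^{(k)}}\right)+S(r,f)=N\left(r,\frac{f^{(k)}}{f}\right)+S(r,f)=S(r,f)$, which is absurd for transcendental $f$, with a separate elementary argument for rational $f$; the paper does this in Subcase 2.2 of the proof of Theorem \ref{th1.1} (to which the proof of Theorem \ref{th1.2} refers back), using the dedicated Lemma \ref{lem2.7} applied to $\Psi=\alpha P(f)[P(f)]^{(k)}$. Without such an argument your case analysis does not close.
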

\begin{theoG}\cite{Lin & Lin & KMJ & 2006}
    Let $ k\geq 1 $, and  $ f $ be a non-constant meromorphic function, $ a\in S(f) $ and $ a\not\equiv 0, \infty $. If $ f-a $ and $ f^{(k)}-a $ share $ (0,0) $  and \beas 5\delta_{k+2}(0,f)+(2k+7)\Theta(\infty,f)>2k+11, \eeas then $ f\equiv f^{(k)} $.
\end{theoG}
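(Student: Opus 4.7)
The plan is to follow the two-function Nevanlinna scheme adapted to the weight-$0$ (IM) setting. Set $F := f/a$ and $G := f^{(k)}/a$, which are meromorphic with $T(r,F) = T(r,f) + S(r,f)$ and $T(r,G) \le T(r,f) + k\,\overline N(r,\infty;f) + S(r,f)$; by hypothesis, $F$ and $G$ share $1$ IM. Introduce the auxiliary function
\[
H := \left(\frac{F''}{F'} - \frac{2F'}{F-1}\right) - \left(\frac{G''}{G'} - \frac{2G'}{G-1}\right),
\]
and split into the standard dichotomy.

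If $H \equiv 0$, integrating twice yields a M\"obius relation $\frac{1}{G-1} = \frac{A}{F-1} + B$ with constants $A, B$. I would dispose of the subcases $B \ne 0$, and $B = 0$ with $A \ne 1$, by showing each forces a deficiency inequality incompatible with the hypothesis $5\delta_{k+2}(0,f) + (2k+7)\Theta(\infty,f) > 2k+11$; the surviving case $A = 1,\, B = 0$ gives $F \equiv G$, i.e.\ $f \equiv f^{(k)}$.

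When $H \not\equiv 0$, every common simple $1$-point of $F$ and $G$ is a zero of $H$, hence $\overline N_E^{1)}(r,1) \le N(r,0;H) \le N(r,H) + S(r,f)$, with $N(r,H)$ bounded by the poles of $f$, the multiple zeros of $F$ and $G$, the auxiliary zero terms $N_0(r,0;F'),\, N_0(r,0;G')$, and $N_L(r,1;F) + N_L(r,1;G)$. Apply the second main theorem to $F$ and to $G$, add the two inequalities, and use $\overline N(r,1;F) = \overline N(r,1;G)$ together with the decomposition
\[
\overline N(r,1;F) = \overline N_E^{1)}(r,1) + \overline N_E^{(2}(r,1) + N_L(r,1;F) + N_L(r,1;G)
\]
to eliminate $\overline N_E^{1)}$. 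Combine with the Milloux-type estimates $N_{k+2}(r,0;G) \le N_{k+2}(r,0;f) + k\,\overline N(r,\infty;f) + S(r,f)$ and $T(r,G) \le T(r,f) + k\,\overline N(r,\infty;f) + S(r,f)$, and with the standard bounds for $N_L(r,1;F)$ and $N_L(r,1;G)$ in terms of $\overline N(r,0;F')$, $\overline N(r,0;G')$ plus small terms; after collecting coefficients, the resulting inequality should take the form
\[
T(r,f) \le 5\,N_{k+2}(r,0;f) + (2k+7)\,\overline N(r,\infty;f) + S(r,f).
\]
Dividing by $T(r,f)$ and taking $\limsup$ then yields $5\delta_{k+2}(0,f) + (2k+7)\Theta(\infty,f) \le 2k+11$, contradicting the hypothesis.

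The main obstacle is the delicate bookkeeping peculiar to the weight-$0$ setting: since no positive weight is assumed, the unequal-multiplicity terms $N_L(r,1;F)$ and $N_L(r,1;G)$ cannot be absorbed into $S(r,\cdot)$ and must be carried through both copies of the second main theorem. This duplication is precisely what inflates the pole coefficient to $2k+7$ (as opposed to $k+9$ in Theorem F and $4$ in Theorem E) and forces the coefficient $5$ in front of $N_{k+2}$. Making these constants emerge \emph{exactly}---rather than slightly larger, which would leave room for the hypothesis not to produce a contradiction---is the most technical part of the argument.
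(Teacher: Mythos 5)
Your outline is, in substance, the original Lin--Lin argument, and it is consistent with the machinery this paper imports: the case $\mathfrak H\not\equiv 0$ of your scheme is exactly Lemma \ref{lem2.6}(iii) (whose proof is the ``apply the second main theorem to both functions and add'' computation you describe), and your bound for the unequal-multiplicity terms is Lemma \ref{lem2.5}. The target estimate $T(r,f)\leq 5N_{k+2}\left(r,\frac{1}{f}\right)+(2k+7)\ol N(r,f)+S(r,f)$ is the right one, and the constants do come out exactly, provided one converts $N_2\left(r,\frac{1}{f^{(k)}}\right)$ via $T(r,f^{(k)})-T(r,f)+N_{k+2}\left(r,\frac{1}{f}\right)$ (the $n=1$ case of Lemma \ref{lem2.3}(ii)) and bounds $\ol N\left(r,\frac{1}{f^{(k)}}\right)$ by $N_{k+1}\left(r,\frac{1}{f}\right)+k\ol N(r,f)+S(r,f)$ inside the $\ol N_L$ terms. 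Note, however, that the paper itself contains no proof of Theorem G: it is quoted as background. Its own IM-sharing analogue (Theorem \ref{th1.3}) is proved by a genuinely different device, the auxiliary function $\Psi=\frac{1}{\mathfrak F}\left(\frac{\mathfrak G^{\prime}}{\mathfrak G-1}-(k+1)\frac{\mathfrak F^{\prime}}{\mathfrak F-1}\right)$ in the style of Liu--Gu, which avoids the $\ol N_L$ bookkeeping altogether and produces a hypothesis of a different shape; your $\mathfrak H$-route and that $\Psi$-route are both viable, the former giving the specific constants $5$ and $2k+7$ of Theorem G.

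There is one concrete soft spot in your $\mathfrak H\equiv 0$ branch. You propose to dispose of all degenerate M\"obius subcases ``by a deficiency inequality incompatible with the hypothesis,'' but the subcase $\mathfrak A=\mathfrak B=-1$ gives $\mathfrak F\mathfrak G\equiv 1$, i.e. $f\,f^{(k)}\equiv a^2$, and this situation is \emph{not} incompatible with the hypothesis on deficiency grounds: it forces all zeros and poles of $f$ into the zeros and poles of $a$, so that $\delta_{k+2}(0,f)=\Theta(\infty,f)=1$ and $5\delta_{k+2}(0,f)+(2k+7)\Theta(\infty,f)=2k+12>2k+11$. This case must be excluded by a separate direct argument from the identity $f f^{(k)}\equiv a^2$ itself (for instance, since $f$ is then zero-free and pole-free up to $S(r,f)$, one gets $2T(r,f)=m(r,f^2)+S(r,f)\leq T\left(r,ff^{(k)}\right)+T\left(r,\frac{f}{f^{(k)}}\right)+S(r,f)=S(r,f)$, a contradiction); this is precisely the role played by Subcase 2.2 and Lemma \ref{lem2.7} in the paper's proof of Theorem \ref{th1.1}. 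With that case treated separately, the rest of your plan goes through.
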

\begin{rem}
    To obtain the identical relation $ f\equiv f^{(k)} $ as a conclusion in \emph{Theorems E, F, G}, the respective conditions $ 2\delta_{k+2}(0,f)+4\Theta(\infty,f)>5 $, $ 5\delta_{k+2}(0,f)+(k+9)\Theta(\infty,f)>k+12 $ and $ 5\delta_{k+2}(0,f)+(2k+7)\Theta(\infty,f)>2k+11, $ can not be removed. The following example, ensures this fact.
\end{rem}
\begin{exm}
    Let $ f(z)=\displaystyle\frac{ze^z}{e^z+1}. $ Then we see that \beas f(z)-1=\frac{z-e^{-z}-1}{e^{-z}+1}\;\; \text{and}\;\; f^{\prime}(z)-1=-\frac{e^{-z}\left(z-e^{-z}-1\right)}{\left(e^{-z}+1\right)^2}.  \eeas Then $ f $ and $ f^{\prime} $ share the value $ 1 $ $ CM $, and $ \Theta(\infty,f)=0 $, $ \delta_{p}(0,f)=1 $ for $ p\geq 2 $. Then we see that \begin{enumerate}
        \item[(i).] $ 2\delta_{k+2}(0,f)+4\Theta(\infty,f)=2\ngtr 5. $
        \item[(ii).] $ 5\delta_{k+2}(0,f)+(k+9)\Theta(\infty,f)=5\ngtr k+12 $.
        \item[(iii).] $ 5\delta_{k+2}(0,f)+(2k+7)\Theta(\infty,f)=5\ngtr 2k+11 $,
    \end{enumerate}
    and we see that  $ f\not\equiv f^{\prime}. $
\end{exm}
\par Later, in $ 2011 $, \emph{Xu - Hu} \cite{Xu & Hu & GM & 2011} generalized \emph{Theorems E, F} and \emph{G} by considering \beas L(f)=a_kf^{(k)}+a_{k-1}f^{(k-1)}+\ldots+a_0f \eeas and  proving the following results.
\begin{theoH}\cite{Xu & Hu & GM & 2011}
    Let $ k\geq 1 $ and $ 2\leq m\leq \infty $. Let $ f $ be a non-constant meromorphic function, $ a\in S(f) $ and $ a\not\equiv 0, \infty $. If $ f-a $ and $ L(f)-a $ share $ (0,m) $  and \beas 2\delta_{k+2}(0,f)+4\Theta(\infty,f)>5, \eeas then $ f\equiv L(f) $.
\end{theoH}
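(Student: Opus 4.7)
The plan is to follow the strategy of the proof of Theorem E (the special case $L(f) = f^{(k)}$), substituting the linear differential polynomial $L(f) = \sum_{j=0}^{k} a_j f^{(j)}$ for $f^{(k)}$ throughout and verifying that the analytic estimates carry over. Arguing by contradiction, assume $f \not\equiv L(f)$. Normalize by setting $F = f/a$ and $G = L(f)/a$; since $a \in S(f)$ with $a \not\equiv 0, \infty$, we have $T(r, F) = T(r, f) + S(r, f)$ and $T(r, G) = T(r, L(f)) + S(r, f)$, and the hypothesis reads: $F - 1$ and $G - 1$ share $(0, m)$ with $m \geq 2$. Introduce the classical auxiliary function
$$H = \left( \frac{F''}{F'} - \frac{2 F'}{F - 1} \right) - \left( \frac{G''}{G'} - \frac{2 G'}{G - 1} \right),$$
for which $m(r, H) = S(r, f)$ by the logarithmic-derivative lemma. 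Because $m \geq 2$, every common zero of $F - 1$ and $G - 1$ of matching multiplicity at most $m$ is a regular point of $H$; the remaining poles of $H$ are limited to poles of $f$, to zeros of $F'$ or $G'$ outside the common zero set of $F - 1$ and $G - 1$, and to \emph{bad} zeros whose total counting function is absorbed into $S(r, f)$ by the weak-sharing definition.

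I now split into two cases. If $H \equiv 0$, two integrations produce a Möbius-type relation
$$\frac{1}{F - 1} = \frac{A}{G - 1} + B$$
with constants $A \neq 0$ and $B$. In the four configurations $(A = 1, B = 0)$, $(A = 1, B \neq 0)$, $(A \neq 1, B = 0)$, $(A \neq 1, B \neq 0)$, the last three each lead---via the explicit relation $G = \phi(F)$, the pole-inheritance $\overline{N}(r, G) \leq \overline{N}(r, f) + S(r, f)$, and the control of zeros of $L(f)$ by $N_{k+2}(r, 1/f)$---to an estimate that contradicts $2\delta_{k+2}(0, f) + 4\Theta(\infty, f) > 5$. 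The surviving configuration $(A, B) = (1, 0)$ gives $F \equiv G$, hence $f \equiv L(f)$, contrary to the assumption. If instead $H \not\equiv 0$, I apply the Second Fundamental Theorem to $F$ at $0, 1, \infty$,
$$T(r, F) \leq \overline{N}(r, F) + \overline{N}\!\left(r, \tfrac{1}{F}\right) + \overline{N}\!\left(r, \tfrac{1}{F - 1}\right) - N_0\!\left(r, \tfrac{1}{F'}\right) + S(r, f),$$
bound $\overline{N}(r, 1/(F - 1))$ by the common zeros with $G - 1$ (the unmatched contributions being absorbed into $T(r, H)$), combine with the analogous inequality for $G$, and invoke a Milloux-type inequality
$$N_2\!\left(r, \tfrac{1}{L(f)}\right) \leq k\,\overline{N}(r, f) + N_{k+2}\!\left(r, \tfrac{1}{f}\right) + S(r, f)$$
that generalizes the standard form for $f^{(k)}$. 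Summing and simplifying yields
$$2 \leq 5 - 2\delta_{k+2}(0, f) - 4\Theta(\infty, f) + o(1),$$
which contradicts the hypothesis.

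The principal obstacle lies in the case $H \equiv 0$: three of the four Möbius sub-configurations demand separate, delicate arguments combining Nevanlinna deficiency estimates with the structural fact that $L(f)$ inherits its poles from $f$. In each sub-case one must show that a nontrivial Möbius link between $F$ and $G$ violates $2\delta_{k+2}(0, f) + 4\Theta(\infty, f) > 5$, carefully tracking how the parameters $A, B$ interact with the Nevanlinna characteristics of $f$ and $L(f)$. A secondary technical point is establishing the Milloux-type inequality for the general $L(f)$ rather than $f^{(k)}$: when $a_0 \neq 0$, the order of vanishing of $L(f)$ at a zero of $f$ can differ from that of $f^{(k)}$, so one argues via $L(f)/f$ together with the logarithmic-derivative lemma in the Clunie style. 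Once these two ingredients are in place, the remainder of the proof is a direct adaptation of Theorem E.
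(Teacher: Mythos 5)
Your overall strategy (normalize by $a$, form the auxiliary function $H$, split into $H\not\equiv 0$ handled by a second-main-theorem estimate with $N_2$-counting functions and a Milloux-type bound for $L(f)$, and $H\equiv 0$ handled by integrating twice to a M\"obius relation) is exactly the method the paper uses for its own generalization, Theorem \ref{th1.1}, so the skeleton is the right one. However, there is a genuine gap in your treatment of the case $H\equiv 0$. You claim that every M\"obius configuration other than $\mathfrak{F}\equiv\mathfrak{G}$ leads to an estimate contradicting $2\delta_{k+2}(0,f)+4\Theta(\infty,f)>5$. This is false for the sub-configuration in which the relation degenerates to $\mathfrak{F}\mathfrak{G}\equiv 1$ (in the paper's notation, $\mathfrak{B}=-1$, $\mathfrak{A}=-1$; see Subcase 2.2 of the proof of Theorem \ref{th1.1}), i.e.\ $f\,L(f)\equiv a^{2}$. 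In that situation every pole of $f$ away from the zeros and poles of $a$ and of the coefficients of $L$ would force a pole of $a^{2}$, and every zero of $f$ there would force a pole of $L(f)$ where $L(f)$ is regular; hence $N(r,f)=S(r,f)$ and $N(r,1/f)=S(r,f)$, so $\delta_{k+2}(0,f)=\Theta(\infty,f)=1$ and the hypothesis $2\delta_{k+2}(0,f)+4\Theta(\infty,f)=6>5$ is satisfied rather than violated. No counting-function bookkeeping of the kind you describe can produce a contradiction here; the deficiency hypothesis is simply inert in this case.

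What is needed instead is a separate argument showing that $f\,L(f)\equiv a^{2}$ is itself impossible. The paper supplies this (for its setting) through Lemma \ref{lem2.7}: one applies the second main theorem to $\Psi=\alpha\,P(f)[P(f)]^{(k)}$ at two finite values $b,c$, exploits the fact that at a pole of $f$ the pole of $\Psi'$ exceeds half the pole of $\Psi^{2}$ by at least $m+n$, and deduces $2nT(r,f)\leq S(r,f)$ when $\Psi$ is the small function $a^{2}$; the rational case is disposed of separately (Subcases 2.2.1--2.2.2). The original Lin--Lin and Xu--Hu proofs contain the corresponding lemma for $f\,f^{(k)}$ and $f\,L(f)$. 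Your sketch contains no analogue of this step, so as written the proof does not close. (Two smaller points: your final inequality ``$2\leq 5-2\delta_{k+2}(0,f)-4\Theta(\infty,f)+o(1)$'' in the $H\not\equiv 0$ case is not the estimate the argument actually yields --- the correct outcome is $2\delta_{k+2}(0,f)+4\Theta(\infty,f)\leq 5+\varepsilon$, cf.\ the computation in the proof of Theorem \ref{th1.1} --- and your partition of the M\"obius case by $A=1$ versus $A\neq 1$, $B=0$ versus $B\neq 0$ does not isolate the genuinely exceptional configurations, which are governed by $B=0,-1$ and by $A-B-1=0$, $A+1=0$.)
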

\begin{theoI}\cite{Xu & Hu & GM & 2011}
    Let $ k\geq 1 $, and  $ f $ be a non-constant meromorphic function, $ a\in S(f) $ and $ a\not\equiv 0, \infty $. If $ f-a $ and $ L(f)-a $ share $ (0,1) $  and \beas \delta_{k+2}(0,f)+\frac{3}{2}\delta_{2}(0,f)+\left(\frac{7}{2}+k\right)\Theta(\infty,f)>k+5, \eeas then $ f\equiv L(f) $.
\end{theoI}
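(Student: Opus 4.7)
The plan is to adapt the standard Nevanlinna machinery for weighted-sharing problems (originating with \emph{Lahiri}) to the weakly weighted setting of \emph{Lin--Lin}. First I would normalize by setting $F = L(f)/a$ and $G = f/a$; since $a \in S(f)$ with $a \not\equiv 0, \infty$, the hypothesis that $f-a$ and $L(f)-a$ share $(0,1)$ translates (modulo $S(r,f)$) into $F$ and $G$ sharing $(1,1)$ weakly. Then form the Lahiri-type auxiliary function
\[
H = \left(\frac{F''}{F'} - \frac{2F'}{F-1}\right) - \left(\frac{G''}{G'} - \frac{2G'}{G-1}\right),
\]
so that $m(r,H) = S(r,f)$ by the lemma on the logarithmic derivative. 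The argument then splits according to whether $H$ vanishes identically.

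\textbf{Case 1: $H \not\equiv 0$.} A careful inspection of the pole divisor of $H$, using the weight-$1$ sharing condition, shows that any simple $1$-point common to $F$ and $G$ contributes to the zeros of $H$ rather than its poles. Combining $N(r,1/H) \leq T(r,H) + O(1) = N(r,H) + S(r,f)$ with Nevanlinna's second main theorem applied to both $F$ and $G$ with targets $0, 1, \infty$, with a sharp estimate of the Lin--Lin $\overline{N}_L$ counting functions under weight-$1$ sharing (which produces a saving of $1/2$), and with the Milloux-type bounds
\[
N_2\!\left(r, \tfrac{1}{L(f)}\right) \leq N_{k+2}\!\left(r, \tfrac{1}{f}\right) + k\,\overline{N}(r, f) + S(r, f), \qquad T(r, L(f)) \leq T(r, f) + k\,\overline{N}(r, f) + S(r, f),
\]
one obtains, after regrouping,
\[
2\,T(r, f) \leq 2\,N_{k+2}\!\left(r, \tfrac{1}{f}\right) + 3\,N_2\!\left(r, \tfrac{1}{f}\right) + (2k+7)\,\overline{N}(r, f) + S(r, f).
\]
Dividing by $2\,T(r, f)$ and letting $r \to \infty$ outside the exceptional set contradicts the hypothesis $\delta_{k+2}(0,f) + \tfrac{3}{2}\delta_2(0,f) + (\tfrac{7}{2} + k)\Theta(\infty,f) > k+5$.

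\textbf{Case 2: $H \equiv 0$.} Integrating twice gives
\[
\frac{1}{F-1} = \frac{\alpha}{G-1} + \beta, \qquad \alpha, \beta \in S(f),\ \alpha \not\equiv 0.
\]
I would then split into the three subcases: (a) $\beta \equiv 0$; (b) $\beta \not\equiv 0$ with $\alpha + \beta \equiv 0$; (c) $\beta \not\equiv 0$ with $\alpha + \beta \not\equiv 0$. In each subcase, assuming the associated free parameter is not trivial, the second main theorem applied to $F$ or $G$ yields an inequality of the form $T(r,f) \leq \lambda_1 N_{k+2}(r,1/f) + \lambda_2 N_2(r,1/f) + \lambda_3\,\overline{N}(r,f) + S(r,f)$ which violates the deficiency hypothesis. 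The only surviving possibility is $\alpha \equiv 1$ and $\beta \equiv 0$, so $F \equiv G$, i.e., $f \equiv L(f)$.

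The principal obstacle is the bookkeeping in Case 1: matching the coefficients $1$, $\tfrac{3}{2}$, and $\tfrac{7}{2}+k$ exactly requires the sharp estimate of the $\overline{N}_L$ terms available only under weight-$1$ sharing, together with a careful conversion from the Milloux bounds on $N_2(r, 1/L(f))$ and $T(r,L(f))$ back to the original $f$-quantities. Subcase (b) of Case 2, where the relation $(F-1)(G-1) = \alpha$ forces a role-swap between zeros and $1$-points, also needs delicate handling, but this is now standard in this circle of results.
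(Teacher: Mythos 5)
Your Case 1 is essentially the route the paper itself uses for its own weight-$1$ result (the paper does not prove Theorem I — it quotes it from Xu--Hu — but its proof of Theorem \ref{th1.2} is the same template): normalize by $a$, form Lahiri's $H$, and when $H\not\equiv 0$ apply the Lin--Lin weight-$1$ inequality (Lemma \ref{lem2.6}(ii)) together with the two Milloux-type bounds, taking care to apply the inequality to $T(r,\mathfrak{G})$, i.e.\ to $L(f)$, and to use the bound $N_2\left(r,\tfrac{1}{L(f)}\right)\le T(r,L(f))-T(r,f)+N_{k+2}\left(r,\tfrac{1}{f}\right)+S(r,f)$ so that the $T(r,L(f))$ terms cancel; otherwise the constants drift. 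Your target inequality $2T(r,f)\le 2N_{k+2}\left(r,\tfrac1f\right)+3N_2\left(r,\tfrac1f\right)+(2k+7)\overline N(r,f)+S(r,f)$ is exactly equivalent to the stated deficiency hypothesis, so that half of the plan is sound modulo the bookkeeping you acknowledge.

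The genuine gap is in Case 2. After integrating $H\equiv 0$ twice, $\alpha\,(\neq 0)$ and $\beta$ are constants, and from $\tfrac{1}{F-1}=\tfrac{\alpha}{G-1}+\beta$ the relation $(F-1)(G-1)=\alpha$ can never occur: it would force $\tfrac{1}{F-1}$ to be affine in $G$, which is impossible for $\alpha\neq 0$. The true exceptional configuration is $\beta=-1$, $\alpha=-1$, which gives $F G\equiv 1$, i.e.\ $f\,L(f)\equiv a^{2}$. Under your partition this case falls into subcase (c), where you claim a routine second-main-theorem contradiction; but when $F=1/G$ the zeros and poles of $F$ are precisely the poles and zeros of $G$, no third value with small counting function is produced, and the SMT argument you describe does not dispose of it. Excluding $f\,L(f)\equiv a^{2}$ is a separate, substantive step — in the paper's proof of Theorem \ref{th1.1} (to which Theorem \ref{th1.2} reduces) this is Subcase 2.2, handled by the dedicated Lemma \ref{lem2.7} for transcendental $f$ and a direct argument for rational $f$ — and your sketch omits it, while the subcase you flag as delicate, $\alpha+\beta\equiv 0$, is in fact unexceptional and is settled by the same counting argument as the generic case.
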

\begin{theoJ}\cite{Xu & Hu & GM & 2011}
    Let $ k\geq 1 $, and  $ f $ be a non-constant meromorphic function, $ a\in S(f) $ and $ a\not\equiv 0, \infty $. If $ f-a $ and $ L(f)-a $ share $ (0,0) $  and \beas 2\delta_{k+2}(0,f)+\delta_{2}(0,f)+2\Theta(0,f)+(2k+6)\Theta(\infty,f)>2k+10, \eeas then $ f\equiv L(f) $.
\end{theoJ}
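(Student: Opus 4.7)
The plan is to follow the auxiliary-function and Nevanlinna-theoretic route used in Theorems E--I, pushed to the hardest sharing regime (weight $0$, i.e.\ essentially $IM$). Set $\mathcal{F}:=f/a$ and $\mathcal{G}:=L(f)/a$; since $a\in S(f)$, the assumption that $f-a$ and $L(f)-a$ share $(0,0)$ translates, up to an $S(r,f)$-error, into $\mathcal{F}$ and $\mathcal{G}$ sharing $1$ $IM$. A standard Milloux-type bound gives $T(r,L(f))\le T(r,f)+k\ol N(r,f)+S(r,f)$, so $S(r,L(f))=S(r,f)$. Assume for contradiction that $\mathcal{F}\not\equiv\mathcal{G}$, and introduce the classical auxiliary function
\[
H=\left(\frac{\mathcal{F}''}{\mathcal{F}'}-\frac{2\mathcal{F}'}{\mathcal{F}-1}\right)-\left(\frac{\mathcal{G}''}{\mathcal{G}'}-\frac{2\mathcal{G}'}{\mathcal{G}-1}\right).
\]

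The argument splits into two standard cases. If $H\equiv 0$, integrating twice yields a M\"obius relation $\tfrac{1}{\mathcal{F}-1}=\tfrac{A}{\mathcal{G}-1}+B$ with constants $A\ne 0,B$. A routine sub-case analysis on $(A,B)$, invoking the second main theorem and the positivity (forced by the hypothesis) of $\Theta(0,f)$ and $\Theta(\infty,f)$, kills every sub-case except $\mathcal{F}\equiv\mathcal{G}$, contradicting the working assumption. If $H\not\equiv 0$, I would apply the second main theorem to each of $\mathcal{F}$ and $\mathcal{G}$,
\[
T(r,\mathcal{F})\le \ol N(r,\mathcal{F})+\ol N\!\left(r,\tfrac{1}{\mathcal{F}}\right)+\ol N\!\left(r,\tfrac{1}{\mathcal{F}-1}\right)-N_0\!\left(r,\tfrac{1}{\mathcal{F}'}\right)+S(r,f),
\]
and its analogue for $\mathcal{G}$. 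The poles of $H$ come only from common $1$-points of $\mathcal{F},\mathcal{G}$ of unequal multiplicity (the $N_L(r,1;\mathcal{F})$ and $N_L(r,1;\mathcal{G})$ contributions), zeros of $\mathcal{F}',\mathcal{G}'$ off $\{0,1\}$, and poles of $\mathcal{F},\mathcal{G}$. A careful pole-count for $H$, combined with $m(r,H)=S(r,f)$ from the logarithmic derivative lemma, produces a useful upper bound for $\ol N(r,1/(\mathcal{F}-1))+\ol N(r,1/(\mathcal{G}-1))$.

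Summing the two second-main-theorem inequalities, substituting the above bound, and applying the standard small-function estimates
\[
\ol N\!\left(r,\tfrac{1}{\mathcal{F}}\right)\le N_{k+2}\!\left(r,\tfrac{1}{f}\right)+S(r,f),\qquad \ol N\!\left(r,\tfrac{1}{\mathcal{G}}\right)\le N_{2}\!\left(r,\tfrac{1}{L(f)}\right)+S(r,f),
\]
together with the defining inequalities for $\delta_{k+2}(0,f)$, $\delta_2(0,f)$, $\Theta(0,f)$, $\Theta(\infty,f)$, one rearranges to obtain
\[
\bigl[2\delta_{k+2}(0,f)+\delta_2(0,f)+2\Theta(0,f)+(2k+6)\Theta(\infty,f)-(2k+10)\bigr]\,T(r,f)\le S(r,f),
\]
which contradicts the hypothesis of Theorem J and concludes the proof.

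The principal obstacle will be the two $N_L$ terms that are the price of weight-$0$ sharing: for weights $m\ge 2$ (Theorem H) both vanish up to $S(r,f)$; for weight $1$ (Theorem I) only one does; here both persist and have to be absorbed by estimates of the type
\[
N_L(r,1;\mathcal{F})+N_L(r,1;\mathcal{G})\le \ol N(r,1/\mathcal{F}';\mathcal{F}=1)+\ol N(r,1/\mathcal{G}';\mathcal{G}=1)+S(r,f),
\]
which in turn feed extra copies of $\ol N(r,\mathcal{F})$ and $\ol N(r,1/\mathcal{F})$ into the final count. It is precisely this feedback that inflates the coefficient of $\Theta(\infty,f)$ from $4$ in Theorem H up to $2k+6$ here, and forces the appearance of the extra $2\Theta(0,f)$ contribution on the left-hand side of the hypothesis.
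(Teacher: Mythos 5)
You should first note that the paper does not actually prove Theorem J: it is quoted from Xu--Hu as background, and the only in-paper proof you can compare against is that of the analogous weight-$0$ result, Theorem \ref{th1.3}. Your plan --- $\mathcal F=f/a$, $\mathcal G=L(f)/a$, the auxiliary function $H$, the weight-$0$ case (iii) of Lemma \ref{lem2.6}, and absorption of the two $\ol N_L$ terms by bounds of the type in Lemmas \ref{lem2.4}--\ref{lem2.5} --- is exactly the apparatus of the cited source, and the bookkeeping you describe is the one that produces the stated condition $2\delta_{k+2}(0,f)+\delta_2(0,f)+2\Theta(0,f)+(2k+6)\Theta(\infty,f)>2k+10$. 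It is, however, genuinely different from what this paper does in the corresponding situation: in the proof of Theorem \ref{th1.3} the function $H$ is not used at all; instead one takes $\Psi=\frac{1}{\mathfrak F}\left(\frac{\mathfrak G^{\prime}}{\mathfrak G-1}-(k+1)\frac{\mathfrak F^{\prime}}{\mathfrak F-1}\right)$, and splits into $\Psi\equiv 0$ (which integrates to $\mathfrak G-1=c(\mathfrak F-1)^{k+1}$ and is excluded by comparing pole multiplicities) and $\Psi\not\equiv 0$ (where $m(r,\Psi)=S(r,f)$ and a count of the zeros and poles of $\Psi$ give the main estimate). The $H$-route buys hypotheses of the familiar $\delta_{k+2}$, $\delta_2$, $\Theta$ shape at the price of the $\ol N_L$ bookkeeping you rightly identify as the main burden; the $\Psi$-route avoids that bookkeeping entirely and naturally yields conditions involving $\delta_k$ and $\delta_{k+1}$, as in Theorem \ref{th1.3}.

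One step of your $H\equiv 0$ branch is, as stated, wrong rather than routine: the subcase $\mathcal F\mathcal G\equiv 1$, i.e. $f\,L(f)=a^{2}$, cannot be killed by ``the positivity of $\Theta(0,f)$ and $\Theta(\infty,f)$ forced by the hypothesis''. If $f\,L(f)=a^{2}$, then away from the zeros and poles of $a$ and of the coefficients of $L$, $f$ can have neither zeros (a zero of $f$ would force a pole of $L(f)$ there, which is impossible) nor poles (a pole of $f$ of order $m$ gives a pole of $f\,L(f)$ of order at least $2m$), so $N(r,1/f)+N(r,f)=S(r,f)$; hence all four quantities in the hypothesis equal $1$ and the hypothesis is \emph{satisfied}, not contradicted. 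This subcase must be excluded by a direct argument: for instance, from $f^{2}=a^{2}\,f/L(f)$ and the lemma on logarithmic derivatives one gets $2T(r,f)\le N\left(r,L(f)/f\right)+S(r,f)\le k\left(\ol N(r,1/f)+\ol N(r,f)\right)+S(r,f)=S(r,f)$, a contradiction; in the present paper this role is played by Lemma \ref{lem2.7} and Subcase 2.2 of the proof of Theorem \ref{th1.1}. Apart from this, the ``careful pole count for $H$'' is where all the real work lives and is only asserted in your outline, but it is the standard computation and the constants you quote are the right ones.
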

\begin{rem}
    To get the relation $ f\equiv L(f) $ in \emph{Theorems H, I, J}, the respective conditions $ 2\delta_{k+2}(0,f)+4\Theta(\infty,f)>5 $, $ \delta_{k+2}(0,f)+\displaystyle\frac{3}{2}\delta_{2}(0,f)+\displaystyle\left(\frac{7}{2}+k\right)\Theta(\infty,f)>k+5 $ and $ 2\delta_{k+2}(0,f)+\delta_{2}(0,f)+2\Theta(0,f)+(2k+6)\Theta(\infty,f)>2k+10, $ can not be removed. From the following example, one ensures this.
\end{rem}
\begin{exm}
    Let $ f(z)=\displaystyle\frac{e^{z}}{e^{e^z}-1} $ and \beas  L(f)=\displaystyle\frac{2e^z-1}{e^{2z}}f^{\prime}(z)+\frac{2e^{2z}-2e^z+1}{e^{2z}}f(z).\eeas Then we see that \beas f(z)-e^z=\frac{2e^z-e^{e^z}}{e^{e^z}-1}\;\; \text{and}\;\; L(f)-e^z=\frac{2e^z-e^{e^z}}{\left(e^{e^z}-1\right)^2}.  \eeas Then $ f $ and $ L(f) $ share the value $ a(z)=e^z $\; $ CM $, and $ \Theta(\infty,f)=0 $, $ \delta_{p}(0,f)=1 $ for $ p\geq 2 $. Then we see that \begin{enumerate}
        \item[(i).] $  2\delta_{k+2}(0,f)+4\Theta(\infty,f)=2\ngtr 5. $
        \item[(ii).] $ \delta_{k+2}(0,f)+\displaystyle\frac{3}{2}\delta_{2}(0,f)+\left(\displaystyle\frac{7}{2}+k\right)\Theta(\infty,f)=1+\displaystyle\frac{3}{2}\ngtr k+5 $.
        \item[(iii).] $ 2\delta_{k+2}(0,f)+\delta_{2}(0,f)+2\Theta(0,f)+(2k+6)\Theta(\infty,f)=5\ngtr 2k+10 $,
    \end{enumerate}
    and we see that  $ f\not\equiv L(f). $
\end{exm}
\begin{rem}\label{rem1.3}
Regarding finding the class of the meromorphic functions which satisfies $ f\equiv f^{(k)} $ or $ f\equiv L(f) $, we have the following observations.
\begin{enumerate}
    \item[(i)] When a non-constant meromorphic function $ f $ satisfies the equation $ f\equiv f^{(k)} $ or $ f\equiv L(f) $, then obviously the function $ f $ can not have any pole but may have zeros. Therefore, in this case, the solution  function $ f $ must be an entire.
    \item[(ii).] The general solutions of the differential equation $ f\equiv f^{(k)} $, is \beas f(z)=c_1e^{z}+c_2e^{\theta ^2z}+\ldots+c_2e^{\theta ^{k-1}z}, \eeas where all the $ c_i $ $ (i=1, 2, \ldots, k) $ are arbitrary complex constants, and $ \theta=\cos\left(\frac{2\pi}{k}\right)+i\sin\left(\frac{2\pi}{k}\right).$
    \item[(iii).]  On the other hand, the general solution of the differential equation $ f=L(f) $ is in general \beas f(z)=b_1 e^{\beta_1 z}+b_2 e^{\beta_2 z}+\ldots+b_k e^{\beta_k z}, \eeas where all the $ d_i $ $ (i=1, 2, \ldots, k) $ are arbitrary complex constants, and $ \beta_{j}\; (j=1, 2, \ldots, k) $ are the roots of the equation \beas w^k+a_{k-1}w^{k-1}+\ldots+a_1w+a_0=0. \eeas
    \item[(iv).] For any positive integer $ n $, if we choose $ f(z)=c e^{\frac{\mu}{n}z} $, where $ \mu $ is a complex constant satisfying $ \mu^k=1 $, then it is not hard to verify that $ f^n(z)-a(z) $ and $ \left(f^n(z)\right)^{(k)}-a(z) $  share $ 0 $ $ CM $, where $ a(z) $ is a small function with respect to $ f $, and above all we have $ f^n(z)=\left(f^n(z)\right)^{(k)}. $
\end{enumerate}

\end{rem}

\par It is therefore natural to investigate on the question : \emph{what happen if we consider some power of a meromorphic function $ f $ so that $ f^n-a(z) $ and $ \left(f^n\right)^{(k)}-a(z) $ share $ 0 $ $ CM $ ?} \par In this particular direction, \emph{Zhang} \cite{Zha & KMJ & 2009} and \emph{Zhang - Yang} \cite{Zha & Yan & AASFM & 2007} answered the above question and obtained a uniqueness result between $ f^n $ and $ \left(f^n\right)^{(k)} $, and shown that, the function $ f $ actually takes the form \beas f(z)=c e^{\frac{\lambda}{n}z},  \eeas where $ c $ is a non-zero complex number and $ \lambda^k=1 $.
\par
We know that a linear differential equation with constant coefficients can be solved and the general solution can be obtained as the linear combination of the independent solutions of the that equations. In the above theorems, the researchers hence found the solution functions. But what could be the possible relationship and hence the solutions class if we considering polynomial expression of a meromorphic function $ f $ and its derivatives in the \emph{Theorem E} to \emph{Theorem J}, no attempts till now made by any researcher. Investigating and exploring  the above situation is the main motivation of writing this paper.\par Henceforth, throughout this paper, for a meromorphic function $ f $, we consider a polynomial expression $ P(f) $, which is a more general setting of power of $ f $, defined as
\beas P(f)=a_nf^n+\ldots+a_1f+a_0. \eeas
In connection with the above discussions, it is therefore reasonable to raise some questions as below.
\begin{ques}
    \begin{enumerate}
        \item[(i).]     What happens if $ P(f)-a $ and $ [P(f)]^{(k)}-a $ share $ (0,m) $ in all the above mentioned results ?
        \item[(ii).]     Can we get an uniqueness relation between $ P(f)$ and $ [P(f)]^{(k)}$ ?\item[(iii).]     Can we also get a specific form of the function $ f $ if the answer of the questions is true ?
    \end{enumerate}

\end{ques}
In this paper, taking the above questions into background, we investigate to find the possible answers of them. To make our investigation easier, we will use some transformation. Hence we factorize the expression $ P(f) $ as \beas P(f)=a_n\left(z-d_{p_1}\right)^{p_1}\left(z-d_{p_2}\right)^{p_2}\ldots\left(z-d_{p_s}\right)^{p_s}, \eeas where $ a_j\; (j=0, 1, 2, \ldots, n-1), $ $ a_n(\neq 0) $ and $ d_{p_i}\; (i=1, 2, \ldots, s) $ are distinct finite complex numbers, and $ p_1, p_2, \ldots, p_s $, $ n $ and $ k $ all are positive integers with $\displaystyle \sum_{j=1}^{s}p_j=n $.  Let $ p=\max\{p_1, p_2, \ldots, p_s\} $, and we consider an arbitrary polynomial \beas  Q(f_*)&=&a_n\prod_{j=1, p_j\neq p}^{s}\left(f_{*}+d_p-d_{p_j}\right)^{p_j}\\ &=& c_mf_{*}^m+\ldots+c_1f_*+c_0, \eeas where $ a_n=c_m $, $ f_*=f-d_p $ and $ m=n-p $. Obviously, we have \beas P(f)=f_*^pQ(f_*). \eeas In particular, when $ d_p=0 $, then it is not hard to get $ f_*=f $ and $ P(f)=f^pQ(f) $. \par We now state the main results of this paper as follows.
\begin{theo}\label{th1.1}
    Let $ k\geq 1 $ and $ 2\leq m\leq \infty $, be two integers. Let $ f $ be a non-constant meromorphic function, and $ a\in S(f) $ with $ a\not\equiv 0, \infty $. If $ P(f)-a $ and $ [P(f)]^{(k)}-a $ share $ (0,m) $ and $ 2n\delta_{k+2}(0,f)+4\Theta(\infty,f)>5 $, then $ P(f)\equiv [P(f)]^{(k)} $ i.e., $ f_{*}^pQ(f_*)=[ f_{*}^pQ(f_*)]^{(k)} $. \par  Furthermore, if $ p>k+1 $ then
    \begin{enumerate}
        \item[(i).] $ Q(f_*) $ reduces to a non-zero monomial $ c_jf_{*}^j (\not\equiv 0) $ for some $ j\in \{0,1,2, \ldots,m\} $.
        \item[(ii).]  $ f(z) $ takes the form \beas f(z)=c e^{\frac{\lambda}{p+j}z}+d_p, \eeas where $ c $ is a non-zero constant and $ \lambda^k=1 $.
    \end{enumerate}
\end{theo}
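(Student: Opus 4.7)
The plan is to first establish $P(f)\equiv[P(f)]^{(k)}$ via a Nevanlinna-theoretic argument in the Lin--Lin template, and then to extract the structural form of $Q$ and $f$ from this identity when $p>k+1$. I would set $\mathcal{F}=P(f)/a$ and $\mathcal{G}=[P(f)]^{(k)}/a$, so that $\mathcal{F}$ and $\mathcal{G}$ share $1$ weakly with weight $m\geq 2$, and introduce the standard auxiliary function
\[
H=\left(\frac{\mathcal{F}''}{\mathcal{F}'}-\frac{2\mathcal{F}'}{\mathcal{F}-1}\right)-\left(\frac{\mathcal{G}''}{\mathcal{G}'}-\frac{2\mathcal{G}'}{\mathcal{G}-1}\right).
\]

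Assume $H\not\equiv 0$. A routine pole count gives $m(r,H)=S(r,f)$, and the weight-$m$ sharing ($m\geq 2$) ensures every matched $1$-point of $\mathcal{F}$ and $\mathcal{G}$ is a zero of $H$, so its contribution cancels in the Second Main Theorem estimate. Applying the Second Main Theorem to both $\mathcal{F}$ and $\mathcal{G}$, and translating via $T(r,\mathcal{F})=nT(r,f)+S(r,f)$, $\ol N(r,\mathcal{F})\leq \ol N(r,f)+S(r,f)$, the Milloux-type inequality $N_2(r,1/\mathcal{G})\leq N_{k+2}(r,1/P(f))+k\,\ol N(r,f)+S(r,f)$, and the factorization-based bound $N_{k+2}(r,1/P(f))\leq n(1-\delta_{k+2}(0,f))T(r,f)+S(r,f)$ coming from $P(f)=a_n\prod(f-d_{p_i})^{p_i}$, one arrives at an inequality which, together with $2n\delta_{k+2}(0,f)+4\Theta(\infty,f)>5$, is contradictory. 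Hence $H\equiv 0$, integration yields $\frac{1}{\mathcal{F}-1}=\frac{A}{\mathcal{G}-1}+B$ for constants $A\neq 0$ and $B$, and a case analysis on $(A,B)$ eliminates every pair other than $(1,0)$ under the same hypothesis, giving $\mathcal{F}\equiv\mathcal{G}$, i.e., $P(f)\equiv[P(f)]^{(k)}$.

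Now suppose $p>k+1$ and write $F=P(f)=f_*^{p+j_0}R(f_*)$, where $j_0$ is the smallest index with $c_{j_0}\neq 0$ and $R$ is a polynomial with $R(0)\neq 0$. A pole of $f$ of order $\ell\geq 1$ would give $F$ a pole of order $n\ell$ and $F^{(k)}$ one of order $n\ell+k$, incompatible with $F\equiv F^{(k)}$; so $f$ is entire. A zero of $f_*$ of order $\mu\geq 1$ at some $z_0$ would give $F$ a zero of order $(p+j_0)\mu\geq p>k$, whence $F^{(k)}$ would have a zero of order $(p+j_0)\mu-k<(p+j_0)\mu$, again contradicting $F\equiv F^{(k)}$. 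Hence $f_*$ is entire and zero-free, so $f_*=e^{h}$ for some entire $h$, and
\[
F=\sum_{i=j_0}^{m}c_i\,e^{(p+i)h(z)}.
\]
The general solution of $w=w^{(k)}$ is $\sum_{j=0}^{k-1}\gamma_j e^{\omega_j z}$ with $\omega_j^k=1$; equating the two expressions for $F$ and invoking the linear independence of exponentials with distinct exponents forces, for each $i$ with $c_i\neq 0$, the identity $(p+i)h(z)=\omega z+\mathrm{const}$ for some $k$-th root of unity $\omega$. Hence $h$ is linear: $h(z)=\alpha z+\beta$ with $\alpha\neq 0$, and $((p+i)\alpha)^k=1$ whenever $c_i\neq 0$. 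Two indices $i<i'$ with $c_i,c_{i'}\neq 0$ would give a positive real $k$-th root of unity $(p+i')/(p+i)>1$, impossible; so exactly one $c_j$ is nonzero, giving $Q(f_*)=c_j f_*^j$ as in (i). Setting $\lambda=(p+j)\alpha$, we obtain $\lambda^k=1$ and $f(z)=c\,e^{\lambda z/(p+j)}+d_p$ with $c=e^\beta\neq 0$, proving (ii).

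The principal difficulty lies in the Nevanlinna computation of the second paragraph: the hypothesis is expressed via $\delta_{k+2}(0,f)$ rather than $\delta_{k+2}(0,P(f))$, so bridging these via the factorization $P(f)=a_n\prod(f-d_{p_i})^{p_i}$ and the shift to the dominant factor $f_*^p$ requires careful bookkeeping to achieve the exact threshold $5$. A subtler structural point in the third paragraph is the deduction that $h$ is linear: this rests on matching a sum of exponentials with exponents $(p+i)h$ to a sum with exponents $\omega_j z$, which requires the Borel/Hadamard-style observation that such identities can hold only when each $(p+i)h$ is itself of the form $\omega z+\mathrm{const}$.
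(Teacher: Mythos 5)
The first half of your plan follows the paper's route (the Lin--Lin auxiliary function $\mathfrak{H}$, a Lemma-2.6-type Second Main Theorem estimate, then integration of $\mathfrak{H}\equiv 0$ and a case analysis on the resulting M\"obius relation), but the step you yourself single out as the principal difficulty is exactly where the argument breaks. The bound you propose, $N_{k+2}\left(r,\frac{1}{P(f)}\right)\leq n\left(1-\delta_{k+2}(0,f)\right)T(r,f)+S(r,f)$, is false in general: the zeros of $P(f)$ are the $d_{p_i}$-points of $f$, not the zeros of $f$, so $\delta_{k+2}(0,f)$ gives no control on $N_{k+2}(r,1/P(f))$. For instance $f=e^z$, $P(f)=(f-1)^n$ has $\delta_{k+2}(0,f)=1$ while $N_{k+2}(r,1/P(f))$ is comparable to $T(r,f)$. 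The paper never makes such a bridge: its proof works throughout with $\delta_{k+2}(0,P(f))$ (exactly as in the hypothesis of Theorem 1.2), deriving $nT(r,f)\leq 2N_{k+2}\left(r,\frac{1}{P(f)}\right)+4\ol N(r,f)+S(r,f)$ and contradicting a condition phrased in terms of $\delta_{k+2}(0,P(f))$; the ``$\delta_{k+2}(0,f)$'' in the statement has to be read that way. So, as proposed, your contradiction in the case $\mathfrak{H}\not\equiv 0$ is not established. A second, smaller gap is the claim that the case analysis ``eliminates every pair other than $(1,0)$ under the same hypothesis'': the pair giving $\mathfrak{F}\mathfrak{G}\equiv 1$, i.e. $P(f)[P(f)]^{(k)}=a^2$, is not removed by the same routine counting; the paper needs a separate argument for it (its Lemma 2.7, a Second Main Theorem estimate for $\Psi=\alpha P(f)[P(f)]^{(k)}$ together with a pole-multiplicity computation, plus a separate treatment of rational $f$), and your sketch does not indicate how you would dispose of this case.

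By contrast, your second half is sound and is in fact tighter than the paper's Lemma 2.8. Where the paper asserts that $p>k+1$ makes $0$ a Picard exceptional value of $f_*$, applies Borel's theorem to $\sum_j\phi_je^{jh}\equiv 0$, and rules out two surviving terms only with the vague remark that $f$ ``would have two different forms'', you prove zero-freeness by comparing the zero orders of $F$ and $F^{(k)}$ (using $(p+j_0)\mu\geq p>k+1$), then match $\sum_i c_ie^{(p+i)h}$ against the general solution $\sum_j\gamma_je^{\omega_jz}$, $\omega_j^k=1$, of $w=w^{(k)}$, use Borel-type grouping (legitimate since $h$ is non-constant, so no two exponents $(p+i)h$, $(p+i')h$ differ by a constant) to force each $(p+i)h$ with $c_i\neq 0$ to be linear, and conclude from the impossibility of $(p+i')/(p+i)>1$ being a $k$-th root of unity that only one $c_j$ survives. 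This yields (i) and (ii) cleanly; only the first half needs repair, most naturally by stating and using the deficiency hypothesis for $P(f)$ rather than for $f$, as the paper's own proof does.
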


\begin{theo}\label{th1.2}
    Let $ k(\geq 1) $ be an integer, $ f $ be a non-constant meromorphic function, and $ a\in S(f) $ where $ a\not\equiv 0, \infty $. If $ P(f)-a $ and $ [P(f)]^{(k)}-a $ share $ (0,1) $ and \beas  {5n}\delta_{k+2}(0,P(f))+(k+9)\Theta(\infty,f)>{3n+k+9},\eeas  then the conclusion of \emph{Theorem \ref{th1.1}} holds.
\end{theo}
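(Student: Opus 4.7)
The plan follows the standard strategy for problems involving weakly weighted sharing. Set $F=P(f)/a$ and $G=[P(f)]^{(k)}/a$, so that $F-1$ and $G-1$ share $0$ with weight $1$. Introduce the auxiliary function
\[
H \;=\; \left(\frac{F''}{F'}-\frac{2F'}{F-1}\right)-\left(\frac{G''}{G'}-\frac{2G'}{G-1}\right),
\]
and split the argument into the cases $H\equiv 0$ and $H\not\equiv 0$.

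First treat the case $H\not\equiv 0$. The simple common $1$-points of $F$ and $G$ are zeros of $H$, so the logarithmic derivative lemma gives a bound of the shape
\[
\ol N^{E}_{1)}(r,1;F,G)\;\leq\;\ol N_{(2}\!\left(r,\tfrac{1}{F}\right)+\ol N_{(2}\!\left(r,\tfrac{1}{G}\right)+\ol N(r,F)+\ol N(r,G)+\ol N_{*}(r,1;F,G)+S(r,f).
\]
Applying Nevanlinna's second main theorem to $F$ with targets $\{0,1,\infty\}$, using the weight-$1$ decomposition
\[
\ol N\!\left(r,\tfrac{1}{F-1}\right)\leq \ol N^{E}_{1)}(r,1;F,G)+\ol N_{L}(r,1;F)+\ol N_{L}(r,1;G)+\ol N^{E}_{(2}(r,1;F,G)+S(r,f),
\]
and the Milloux-type bound $T(r,G)\leq T(r,F)+N_{k+1}\!\left(r,\tfrac{1}{P(f)}\right)-N\!\left(r,\tfrac{1}{G}\right)+S(r,f)$, combined with the identities $T(r,P(f))=nT(r,f)+S(r,f)$ and $\ol N(r,F)=\ol N(r,f)$, I expect to arrive at a numerical inequality of the form
\[
\bigl[5n\,\delta_{k+2}(0,P(f))+(k+9)\Theta(\infty,f)\bigr]T(r,f)\;\leq\;(3n+k+9)\,T(r,f)+S(r,f),
\]
which contradicts the hypothesis. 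The coefficient $(k+9)$ on $\Theta(\infty,f)$ arises by tracking each appearance of $\ol N(r,F)$ and $\ol N(r,G)$ through the estimates for $T(r,G)$, $N(r,H)$ and the second main theorem, together with the pole contributions inside the $N_L$ terms; the coefficient $5n$ on $\delta_{k+2}(0,P(f))$ arises after collecting the various $N_{k+2}$-type counting functions through $T(r,P(f))=nT(r,f)+S(r,f)$.

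Hence $H\equiv 0$, and integrating twice yields $1/(G-1)=A/(F-1)+B$ for constants $A,B$ with $A\neq 0$. The three sub-cases $(B\neq 0,\,A\neq B)$, $(B\neq 0,\,A=B)$ and $(B=0,\,A\neq 1)$ are each ruled out on deficiency grounds by the same arguments used in the corresponding step of Theorem~\ref{th1.1}: in each situation one exhibits either a Picard-exceptional value of $F$ or $G$ whose deficiency is incompatible with the hypothesis, or else a rigid Möbius identification that simultaneously forces $\Theta(\infty,f)$ and $\delta_{k+2}(0,P(f))$ to be too small. The remaining case $A=1,\,B=0$ gives $F\equiv G$, i.e.\ $P(f)\equiv [P(f)]^{(k)}$. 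The conclusions for $p>k+1$---namely that $Q(f_*)$ reduces to a non-zero monomial $c_jf_{*}^j$ and that $f(z)=ce^{\lambda z/(p+j)}+d_p$ with $\lambda^k=1$---then follow verbatim from the corresponding part of the proof of Theorem~\ref{th1.1}, by substituting $P(f)=f_{*}^pQ(f_*)$ into the identity $P(f)\equiv[P(f)]^{(k)}$ and comparing the orders of zeros on both sides.

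The main obstacle is the case $H\not\equiv 0$. Under weight-$1$ sharing, the terms $\ol N_L(r,1;F)$ and $\ol N_L(r,1;G)$ are no longer absorbable into $S(r,f)$, as they were in Theorem~\ref{th1.1} under weight $m\geq 2$; obtaining the sharp threshold $3n+k+9$ requires careful accounting of the half-multiplicity contributions of these $N_L$-terms, and this is precisely what differentiates the numerology of Theorem~\ref{th1.2} from that of Theorem~\ref{th1.1}.
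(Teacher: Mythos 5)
Your overall architecture coincides with the paper's: the same $\mathfrak{F}=P(f)/a$, $\mathfrak{G}=[P(f)]^{(k)}/a$ and $\mathfrak{H}$, the dichotomy $\mathfrak{H}\equiv 0$ versus $\mathfrak{H}\not\equiv 0$, and, in the case $\mathfrak{H}\equiv 0$, the double integration, the M\"obius case analysis and the reduction to the monomial/exponential form via the analogue of Lemma \ref{lem2.8}; that part of your proposal is acceptable since it genuinely repeats the corresponding part of Theorem \ref{th1.1}. The genuine gap is in the case $\mathfrak{H}\not\equiv 0$, which you yourself single out as the only new content of this theorem: you never derive the inequality with the stated coefficients, you only announce that you ``expect to arrive at'' $\left[5n\,\delta_{k+2}(0,P(f))+(k+9)\Theta(\infty,f)\right]T(r,f)\leq (3n+k+9)T(r,f)+S(r,f)$ and gesture at ``tracking each appearance'' of the counting functions. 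The intermediate estimates you do write down would not deliver this as stated: the ``Milloux-type bound'' $T(r,\mathfrak{G})\leq T(r,\mathfrak{F})+N_{k+1}\left(r,\tfrac{1}{P(f)}\right)-N\left(r,\tfrac{1}{\mathfrak{G}}\right)+S(r,f)$ is not one of the standard lemmas (the relevant one, Lemma \ref{lem2.2}(ii), is arranged differently and involves $N\left(r,\tfrac{1}{P(f)}\right)$, not $N_{k+1}$), and you never quantify the ``half-multiplicity contributions'' of the $\ol N_L$-terms under weight-$1$ sharing, which is exactly where the constants $5n$, $k+9$ and $3n+k+9$ come from.

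For comparison, the paper's computation runs as follows: Lemma \ref{lem2.6}(ii) gives $T(r,\mathfrak{G})\leq N_2(r,\mathfrak{F})+N_2\left(r,\tfrac{1}{\mathfrak{F}}\right)+N_2(r,\mathfrak{G})+N_2\left(r,\tfrac{1}{\mathfrak{G}}\right)+\ol N^{L}\left(r,\tfrac{1}{\mathfrak{G}-1}\right)+S(r,f)$; the weight-$1$ surplus is estimated by $\ol N^{L}\left(r,\tfrac{1}{\mathfrak{G}-1}\right)\leq \tfrac{1}{2}\ol N(r,f)+\tfrac{1}{2}\ol N\left(r,\tfrac{1}{[P(f)]^{(k)}}\right)+S(r,f)$; and Lemma \ref{lem2.3} converts both $N_2\left(r,\tfrac{1}{[P(f)]^{(k)}}\right)$ and $\ol N\left(r,\tfrac{1}{[P(f)]^{(k)}}\right)$ into $N_{k+2}\left(r,\tfrac{1}{P(f)}\right)$ plus pole terms. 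Collecting everything yields $nT(r,f)\leq \tfrac{5}{2}N_{k+2}\left(r,\tfrac{1}{P(f)}\right)+\tfrac{k+9}{2}\ol N(r,f)+S(r,f)$, and since $N_{k+2}\left(r,\tfrac{1}{P(f)}\right)\leq \left(1-\delta_{k+2}(0,P(f))+\epsilon\right)nT(r,f)+S(r,f)$ and $\ol N(r,f)\leq \left(1-\Theta(\infty,f)+\epsilon\right)T(r,f)$, this forces $5n\delta_{k+2}(0,P(f))+(k+9)\Theta(\infty,f)\leq 3n+k+9$, contradicting the hypothesis. Some explicit computation of this kind is indispensable; as written, your proposal asserts the theorem's numerology rather than proving it.
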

\begin{rem}
    Next example ensures that conclusion of \emph{Theorem \ref{th1.2}} ceases to be hold if we remove the condition \beas {5n}\delta_{k+2}(0,P(f))+(k+9)\Theta(\infty,f)>{3n+k+9}. \eeas
\end{rem}
\begin{exm}
    Let \beas f(z)=-\frac{a_1}{2a_2}+\frac{1}{18a_2}\left(9e^{3z}+6z+2\right)^{1/2}, \eeas where $ a_1, a_2 $ are two non-zero constants and $ P(f) =a_2f^2(z)+a_1f(z)$. Then we see that $ P(f)=\displaystyle\frac{1}{9}({9e^{3z}+6z+2}) $, and also \beas 3(P(f)-z)=[P(f)]^{\prime}-z. \eeas Thus, clearly $ P(f)-z $ and $ [P(f)]^{\prime}-z $ share $ (0,\infty), $ and $ \delta_{3}(0,P(f))=0 $ and $ \Theta(\infty,f)=1. $ Clearly \beas {5n}\delta_{k+2}(0,P(f))+(k+9)\Theta(\infty,f)=10\ngtr 17= 3n+k+9. \eeas Hence $ P(f)\not\equiv [P(f)]^{(k)}. $
\end{exm}
\begin{theo}\label{th1.3}
    Let $ k(\geq 1) $ be an integer, $ f $ be a non-constant meromorphic function, and $ a\in S(f) $ where $ a\not\equiv 0, \infty $. If $ P(f)-a $ and $ [P(f)]^{(k)}-a $ share $ 0 $ $ IM $ and \beas  (k-1)\Theta(\infty,f)+\delta_{2}(\infty,f)+n[\delta_k(0,P(f))+\delta_{k+1}(0,P(f))]>k+n ,\eeas then the conclusion of \emph{Theorem \ref{th1.1}} holds.
\end{theo}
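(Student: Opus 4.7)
The plan is to follow the standard methodology used in the proofs of Theorems E--J, adapted to the polynomial expression $ P(f) $ and to IM sharing (weight zero). First, I set $ F = P(f)/a $ and $ G = [P(f)]^{(k)}/a $, so that the hypothesis ``$ P(f)-a $ and $ [P(f)]^{(k)}-a $ share $ 0 $ IM'' becomes ``$ F $ and $ G $ share $ 1 $ weakly with weight $ 0 $.'' Since $ a \in S(f) $, we have $ S(r,F)=S(r,G)=S(r,f) $, and $ T(r,P(f)) = nT(r,f) + S(r,f) $, so characteristic and counting functions for $ F $ translate back to those of $ f $ modulo $ S(r,f) $.

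Next I introduce the standard auxiliary function
\[
H = \left(\frac{F''}{F'} - \frac{2F'}{F-1}\right) - \left(\frac{G''}{G'} - \frac{2G'}{G-1}\right),
\]
and split into the cases $ H \equiv 0 $ and $ H \not\equiv 0 $. If $ H \not\equiv 0 $, the lemma on the logarithmic derivative yields $ m(r,H) = S(r,f) $, while $ N(r,H) $ can be bounded by contributions from: multiple zeros of $ F-1 $ and $ G-1 $ (the $ N_L $-type terms, present because sharing is only IM); poles of $ F $ and $ G $ (traceable to poles of $ f $, with the $ k $ extra poles introduced by differentiation producing the coefficient $ (k-1)\Theta(\infty,f) $ together with a Milloux-type contribution encoded by $ \delta_2(\infty,f) $); and multiple zeros of $ F $ (which, via the identity $ T(r,P(f)) = nT(r,f)+S(r,f) $, account for the $ n\,\delta_k $ and $ n\,\delta_{k+1} $ terms). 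Since simple common zeros of $ F-1 $ and $ G-1 $ are zeros of $ H $, Nevanlinna's second fundamental theorem applied to $ F $ with targets $ 0,1,\infty $, together with routine bookkeeping, yields an asymptotic inequality of the form
\[
(k-1)\Theta(\infty,f) + \delta_2(\infty,f) + n\,\delta_k(0,P(f)) + n\,\delta_{k+1}(0,P(f)) \leq k + n,
\]
contradicting the hypothesis.

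In the case $ H \equiv 0 $, two integrations give $(F-1)^{-1} = A/(G-1) + B$ for constants $ A, B $. A short sub-case analysis using the deficiency assumption rules out all alternatives and forces $ F \equiv G $, i.e., $ P(f) \equiv [P(f)]^{(k)} $. From this identity $ f $ must be entire, since a pole of $ f $ of order $ q $ would give $ P(f) $ a pole of order $ nq $ but $ [P(f)]^{(k)} $ a pole of order $ nq+k $. Assuming $ p > k+1 $, suppose $ f_* = f - d_p $ has a zero of order $ l \geq 1 $ at some $ z_0 $; since $ Q(0) \neq 0 $, $ P(f) = f_*^p Q(f_*) $ has a zero of order exactly $ lp $ there while $ [P(f)]^{(k)} $ has a zero of order $ lp-k $ (strictly less, because $ p>k+1 $ forces $ lp>k $), contradicting the identity. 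Hence $ f_* $ is nowhere zero. An analogous multiplicity argument at any point where $ f_* $ attains a nonzero root $ v $ of $ Q $ shows $ f_* $ must avoid $ v $; combined with the exclusion of $ 0 $ and Picard's little theorem, this forces $ Q $ to collapse to a monomial $ c_j f_*^j $. Finally the equation $ [f_*^{p+j}]^{(k)} = f_*^{p+j} $ with $ f_*^{p+j} $ nowhere zero gives $ f_*^{p+j} = c\,e^{\lambda z} $ with $ \lambda^k = 1 $, whence $ f(z) = c^{1/(p+j)} e^{\lambda z/(p+j)} + d_p $ as claimed.

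The principal obstacle is the precise counting-function bookkeeping in the $ H \not\equiv 0 $ case: the coefficient $ (k-1) $ multiplying $ \Theta(\infty,f) $, the single $ \delta_2(\infty,f) $ contribution, and the two separate $ n\,\delta_k $ and $ n\,\delta_{k+1} $ terms must all arise exactly from the tally of pole and zero contributions to $ N(r,H) $ and the subsequent application of the second main theorem; the appearance of both $ \delta_k $ and $ \delta_{k+1} $ is the subtle point, and distinguishes this IM statement from the weighted versions treated in Theorems 1.1 and 1.2.
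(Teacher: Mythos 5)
There is a genuine gap at the heart of your $H\not\equiv 0$ case. With IM (weight $0$) sharing, the $\mathfrak{H}$-function machinery (Lemma 2.6(iii) together with the Lemma 2.4/2.5-type bounds on $\ol N_L$) inevitably produces estimates built from $N_2\left(r,\frac{1}{\mathfrak{G}}\right)$, $N_2\left(r,\frac{1}{\mathfrak{F}}\right)$ and several copies of $\ol N(r,f)$, which after Lemma 2.3 translate into conditions of the shape of Theorems G and J, i.e.\ involving $\delta_{k+2}(0,P(f))$ and $\Theta(\infty,f)$ with coefficients like $2k+6$ or $2k+7$. It does not produce the quantity $(k-1)\Theta(\infty,f)+\delta_{2}(\infty,f)+n\left[\delta_k(0,P(f))+\delta_{k+1}(0,P(f))\right]$, and your assertion that ``routine bookkeeping'' yields exactly the negation of the stated hypothesis is precisely the step you do not carry out; you even flag the appearance of $\delta_k$ and $\delta_{k+1}$ as the subtle point without resolving it. The paper's proof of Theorem \ref{th1.3} does not use $\mathfrak{H}$ at all: assuming $\mathfrak{F}\not\equiv\mathfrak{G}$ it introduces the auxiliary function $\Psi=\frac{1}{\mathfrak{F}}\left(\frac{\mathfrak{G}^{\prime}}{\mathfrak{G}-1}-(k+1)\frac{\mathfrak{F}^{\prime}}{\mathfrak{F}-1}\right)$. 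If $\Psi\equiv 0$, integration gives $\mathfrak{G}-1=c(\mathfrak{F}-1)^{k+1}$ and a comparison of pole multiplicities plus Lemma 2.1 contradicts the hypothesis; if $\Psi\not\equiv 0$, one uses $m(r,\Psi)=S(r,f)$, the fact that $\Psi$ vanishes at simple poles of $f$ (so $N(r,f)-\ol N_{(2}(r,f)\leq N\left(r,\frac{1}{\Psi}\right)$ up to $S(r,f)$), the bound $N(r,\Psi)\leq k\ol N(r,f)+N_k\left(r,\frac{1}{P(f)}\right)+\ol N_{k+1}\left(r,\frac{1}{P(f)}\right)+S(r,f)$, and $m(r,P(f))\leq m\left(r,\frac{1}{\Psi}\right)+S(r,f)$. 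It is exactly this combination that generates the terms $(k-1)\Theta(\infty,f)$, $\delta_2(\infty,f)$, $n\delta_k$ and $n\delta_{k+1}$ and hence the contradiction; your route, as described, cannot reach the theorem as stated (it would at best prove a different theorem with a $\delta_{k+2}$-type hypothesis).

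Two smaller points. In your $H\equiv 0$ branch you say a short sub-case analysis ``using the deficiency assumption'' forces $\mathfrak{F}\equiv\mathfrak{G}$, but the sub-case contradictions in the proof of Theorem \ref{th1.1} are driven by a $\delta_{k+2}$/$\Theta(\infty,f)$ condition, and you would need to check that the hypothesis of Theorem \ref{th1.3} actually rules out those alternatives (the paper avoids this by never passing through $\mathfrak{H}$). Also, in your reduction of $Q(f_*)$ to a monomial, the claim that $f_*$ must avoid every nonzero root $v$ of $Q$ by ``an analogous multiplicity argument'' fails when the resulting zero of $P(f)$ has order $lp_j\leq k$, since then $P(f)\equiv[P(f)]^{(k)}$ gives no immediate multiplicity contradiction; the paper instead writes $f_*=e^{h}$ (possible since $p>k+1$ makes $0$ omitted) and invokes Borel's theorem on the resulting identity $\phi_m e^{mh}+\ldots+\phi_1e^{h}+\phi_0\equiv 0$.
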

\begin{rem}
    The condition \beas (k-1)\Theta(\infty,f)+\delta_{2}(\infty,f)+n[\delta_k(0,P(f))+\delta_{k+1}(0,P(f))]>k+n \eeas in \emph{Theorem \ref{th1.3}}\; is sharp which can be seen from the next example.
\end{rem}
\begin{exm}
    Suppose that, $ P(f)=f $, and $ k=1 $, where \beas f(z)=\frac{2e^{2z}}{e^{2z}-1}. \eeas  \par We see that \beas N(r,f)\sim T(r,f),\;\; \text{and so we have }\;\; \delta_{2}(\infty,f)=0.  \eeas\par We also see that $ \Theta(0,P(f))=\delta_{2}(0,P(f))=1. $ \par Then $ P(f) $ and $ [P(f)]^{(k)} $ share the value $ 1 $ $ IM $, and \beas (k-1)\Theta(\infty,f)+\delta_{2}(\infty,f)+n[\delta_k(0,P(f))+\delta_{k+1}(0,P(f))]=k+n \eeas but $ P(f)\not\equiv [P(f)]^{(k)}. $
\end{exm}
\section{\sc Some lemmas}
In this section, we are going to present some lemmas which will needed to prove our main results.\par We define the functions $ \mathfrak{F} $, $ \mathfrak{G} $ and $ \mathfrak{H} $ as follows,
\bea\label{e2.1} \mathfrak{F}=\frac{P(f)}{a},\;\;\; \mathfrak{G}=\frac{[P(f)]^{(k)}}{a}. \eea \bea\label{e2.2} \mathfrak{H}=\left(\frac{\mathfrak{F}^{\prime\prime}}{\mathfrak{F}^{\prime}}-2\frac{\mathfrak{F}^{\prime}}{\mathfrak{F}-1}\right)-\left(\frac{\mathfrak{G}^{\prime\prime}}{\mathfrak{G}^{\prime}}-2\frac{\mathfrak{G}^{\prime}}{\mathfrak{G}-1}\right). \eea

\begin{lem}\cite{Yan & MZ & 1972}\label{lem2.1}
    Let $ f $ be a non-constant meromorphic function and let \beas P(f)=a_nf^n+a_{n-1}f^{n-1}+\ldots+a_1f+a_0, \eeas where $ a_i\in S(f) $ for $ i=0,1,2 \ldots, n; $ $ a_n\neq 0 $, be a polynomial in $ f $ of degree $ n $. Then \beas T\left(r, P(f)\right)= n\; T(r,f)+S(r,f). \eeas
\end{lem}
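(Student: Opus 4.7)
The plan is to decompose $T(r, P(f)) = N(r, P(f)) + m(r, P(f))$ and estimate the two parts separately, showing that each equals $n$ times its counterpart for $f$, up to an $S(r, f)$ error. A notable feature of this route is that it avoids any appeal to the second main theorem with small-function targets.

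First, for the counting function, I would observe that at any pole $z_0$ of $f$ of multiplicity $\mu$ where all $a_j$ are holomorphic at $z_0$ and $a_n(z_0) \neq 0$, the dominant term $a_n f^n$ contributes to $P(f)$ a pole of multiplicity exactly $n\mu$. The contributions that deviate from this rule — poles of $P(f)$ not arising from poles of $f$, and poles of $f$ at which some $a_j$ has a pole or $a_n$ has a zero — all lie in a set whose counting function is controlled by $\sum_{j}\bigl[N(r, a_j) + N(r, 1/a_j)\bigr] = S(r, f)$, since $a_j \in S(f)$ and $a_n \not\equiv 0$. This yields $N(r, P(f)) = n N(r, f) + S(r, f)$.

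Second, for the proximity function, I would perform a pointwise comparison: for $|f(z)|$ sufficiently large relative to $\max_{0 \leq i < n}|a_i(z)/a_n(z)|$, the factorization $P(f) = a_n f^n\bigl(1 + O(1/|f|)\bigr)$ provides the bracketing $\tfrac{1}{2}|a_n||f|^n \leq |P(f)| \leq 2|a_n||f|^n$, while on the complementary region both $|P(f)|$ and $|f|^n$ are pointwise dominated by polynomials in the coefficients. Combining the two regimes yields a pointwise estimate of the form
\[
\bigl|\log^+|P(f)(z)| - n\log^+|f(z)|\bigr| \;\leq\; \sum_{i=0}^{n}\log^+|a_i(z)| + \log^+\bigl|1/a_n(z)\bigr| + O(1),
\]
which integrates over $|z| = r$ to $m(r, P(f)) = n\, m(r, f) + S(r, f)$, since each $m(r, a_i)$ and $m(r, 1/a_n)$ is $S(r, f)$.

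Summing the $N$- and $m$-estimates produces the desired equality $T(r, P(f)) = n\, T(r, f) + S(r, f)$. The main technical step is the pointwise proximity inequality above: one must carry out a careful case analysis depending on the size of $|f(z)|$ relative to the coefficient sizes and track every logarithmic contribution so that it is absorbed into the small-function error term. Beyond this bookkeeping the argument is genuinely elementary, invoking only the definitions of $N$ and $m$ together with the hypothesis $T(r, a_j) = S(r, f)$.
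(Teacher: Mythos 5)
Your argument is essentially correct, but note that the paper itself offers no proof of this lemma: it is quoted from C.~C.~Yang (Math.\ Z.\ \textbf{125} (1972)) and is the polynomial case of the Valiron--Mohon'ko theorem, so the comparison is with the standard literature proof rather than with anything in the paper. The classical treatments usually obtain the hard inequality $nT(r,f)\le T(r,P(f))+S(r,f)$ by induction on the degree or by Mohon'ko's splitting of $P$ into two blocks, whereas you estimate $m$ and $N$ separately; your route is legitimately elementary and in fact gives the sharper separate asymptotics $m(r,P(f))=n\,m(r,f)+S(r,f)$ and $N(r,P(f))=n\,N(r,f)+S(r,f)$. Two pieces of bookkeeping need tightening, though neither affects the conclusion. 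First, in the counting-function step the discrepancy at a ``bad'' pole of $f$ (of order $\mu$, with $a_n$ vanishing to order $s$ or some $a_j$ having a pole of order $t_j$ there) is not bounded by $s+\sum_j t_j$ per se: when $\mu\le s+\max_j t_j$, cancellation among the terms of $P(f)$ can destroy the pole altogether, so the multiplicity deficit can be as large as $n\mu$; since at such points $\mu$ is itself controlled by $s+\max_j t_j$, the total error is at most about $(n+1)\bigl[N(r,1/a_n)+\sum_j N(r,a_j)\bigr]$, which is still $S(r,f)$. Second, in the proximity step, on the region where $|f(z)|$ is comparable to the coefficient sizes the correct pointwise bound carries a factor of roughly $n$ in front of $\sum_i\log^+|a_i(z)|+\log^+\bigl|1/a_n(z)\bigr|$, not the factor $1$ you wrote; again this is harmless after integration because $m(r,a_i)=S(r,f)$ and $m(r,1/a_n)\le T(r,a_n)+O(1)=S(r,f)$. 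With these constants corrected your argument is complete and self-contained.
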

\begin{lem}\cite{Lah & Dew & KMJ & 2003}\label{lem22.2}
    If $ N\left(r,\displaystyle\frac{1}{g^{(k)}}\bigg|\; g\neq 0 \right) $ denotes the counting function of those zeros of $ g^{(k)} $ which are not the zeros of $ g $, where a zero of $ g^{(k)} $ is counted according to its multiplicity, then \beas N\left(r,\frac{1}{g^{(k)}}\bigg| g \neq 0 \right)\leq k \ol N(r,g)+N_{k)}\left(r,\frac{1}{g}\right)+k\ol N_{(k}\left(r,\frac{1}{g}\right)+S(r,g) \eeas
\end{lem}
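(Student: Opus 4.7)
The plan is to combine two classical tools from Nevanlinna theory: the first fundamental theorem applied to $g^{(k)}$ at the value $0$, together with the logarithmic derivative estimate $m(r, g^{(k)}/g) = S(r,g)$.

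First, I would establish the crude global bound
\[ N(r, 1/g^{(k)}) \le N(r, 1/g) + k\,\ol N(r,g) + S(r,g). \]
This follows from three observations: (i) each pole of $g$ of multiplicity $p$ becomes a pole of $g^{(k)}$ of multiplicity $p+k$, so $N(r,g^{(k)}) = N(r,g) + k\,\ol N(r,g)$; combined with $m(r,g^{(k)}) \le m(r,g) + m(r, g^{(k)}/g) = m(r,g) + S(r,g)$, this yields $T(r,g^{(k)}) \le T(r,g) + k\,\ol N(r,g) + S(r,g)$. (ii) The first fundamental theorem gives $T(r,g^{(k)}) = m(r, 1/g^{(k)}) + N(r, 1/g^{(k)}) + O(1)$. (iii) Writing $1/g = (1/g^{(k)}) \cdot (g^{(k)}/g)$ produces $m(r, 1/g^{(k)}) \ge m(r, 1/g) - S(r,g)$. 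Assembling these, together with $T(r,g) = m(r,1/g) + N(r,1/g) + O(1)$, proves the global bound.

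Next, I would bound from below the contribution to $N(r, 1/g^{(k)})$ coming from zeros of $g$. Locally at a zero of $g$ of order $m$, writing $g(z) = (z-z_0)^m u(z)$ with $u(z_0) \ne 0$ and applying Leibniz's rule shows that $g^{(k)}$ has a zero of order exactly $m-k$ when $m > k$ (and need not vanish at all when $m \le k$). Summing over such points and re-expressing the result in the standard counting functions yields
\[ N\!\left(r, \frac{1}{g^{(k)}}\,\bigg|\, g = 0\right) \ge N(r, 1/g) - N_{k)}(r, 1/g) - k\,\ol N_{(k+1}(r, 1/g), \]
since $\sum_{m>k}(m-k)A_m = \sum_{m>k} mA_m - k\sum_{m>k}A_m$, where $A_m$ is the number of zeros of $g$ of order $m$ in the disc. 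Subtracting this from the global bound and relaxing $\ol N_{(k+1}(r, 1/g) \le \ol N_{(k}(r, 1/g)$ produces the stated inequality.

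The main obstacle is the multiplicity bookkeeping in the second step. The crude global bound is a routine Nevanlinna-theoretic calculation, but one must carefully split the zeros of $g$ into those of order $m > k$ (which force zeros of $g^{(k)}$ of order exactly $m-k$) and those of order $m \le k$ (which contribute nothing to $N(r,1/g^{(k)}\mid g=0)$ in the worst case), and then repackage the sum into the compact combination of $N_{k)}(r, 1/g)$ and $\ol N_{(k}(r, 1/g)$ that appears in the lemma.
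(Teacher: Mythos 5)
Your proposal is correct, though it does not follow the argument of the source from which the paper quotes this lemma (the paper itself gives no proof; the result is cited from Lahiri--Dewan). The classical proof is more direct: every zero of $g^{(k)}$ at which $g\neq 0$ is a zero of $g^{(k)}/g$, so $N\left(r,\frac{1}{g^{(k)}}\,\big|\, g\neq 0\right)\leq N\left(r,0;\frac{g^{(k)}}{g}\right)\leq T\left(r,\frac{g^{(k)}}{g}\right)+O(1)=N\left(r,\infty;\frac{g^{(k)}}{g}\right)+S(r,g)$ by the lemma on the logarithmic derivative, and the poles of $g^{(k)}/g$ are then estimated locally: a pole of $g$ contributes at most $k$, a zero of $g$ of multiplicity $m\leq k$ contributes at most $m$, and a zero of multiplicity $m\geq k$ contributes at most $k$, which is precisely the right-hand side. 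Your route instead starts from the global bound $N\left(r,\frac{1}{g^{(k)}}\right)\leq N\left(r,\frac{1}{g}\right)+k\ol N(r,g)+S(r,g)$ (item (i) of the paper's Lemma \ref{lem2.2}) and subtracts a lower bound for the zeros of $g^{(k)}$ sitting at zeros of $g$; your local computation that a zero of $g$ of order $m>k$ forces a zero of $g^{(k)}$ of order exactly $m-k$, the identity $\sum_{m>k}(m-k)A_m=N\left(r,\frac{1}{g}\right)-N_{k)}\left(r,\frac{1}{g}\right)-k\ol N_{(k+1}\left(r,\frac{1}{g}\right)$, the partition $N\left(r,\frac{1}{g^{(k)}}\right)=N\left(r,\frac{1}{g^{(k)}}\,\big|\,g=0\right)+N\left(r,\frac{1}{g^{(k)}}\,\big|\,g\neq 0\right)$, and the final relaxation $\ol N_{(k+1}\leq \ol N_{(k}$ are all valid, so the stated inequality follows. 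Both arguments ultimately rest on $m\left(r,\frac{g^{(k)}}{g}\right)=S(r,g)$; the classical one is shorter and needs only upper bounds on the local pole orders of $g^{(k)}/g$, whereas yours buys the same conclusion from the already-quoted derivative estimate at the cost of slightly heavier multiplicity bookkeeping.
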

\begin{lem}\cite{Yi & Yan & 1995}\label{lem2.2}
    Let $ g $ be a non-constant meromorphic function, and let $ k $ be a positive integer. Then \begin{enumerate}
        \item[(i).]  $ \displaystyle N\left(r,\frac{1}{g^{(k)}}\right)\leq N\left(r,\frac{1}{g}\right)+k\ol N(r,g)+S(r,g). $
        \item[(ii).]  $ \displaystyle N\left(r,\frac{1}{g^{(k)}}\right)\leq T\left(r,g^{(k)}\right)-T(r,g)+N\left(r,\frac{1}{g}\right)+S(r,g). $
    \end{enumerate}
\end{lem}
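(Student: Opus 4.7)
The statement asserts two classical Nevanlinna-theoretic inequalities for the counting function of zeros of $g^{(k)}$, and my plan is to deduce (ii) first from the lemma on the logarithmic derivative together with the First Fundamental Theorem, and then obtain (i) as an immediate corollary of (ii) by bounding $T(r,g^{(k)})$ in terms of $T(r,g)$.

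For part (ii), the plan is to start from the basic relation
\begin{equation*}
m\!\left(r,\frac{1}{g}\right) \;\le\; m\!\left(r,\frac{1}{g^{(k)}}\right) + m\!\left(r,\frac{g^{(k)}}{g}\right),
\end{equation*}
which is just the triangle inequality for proximity functions applied to the factorization $1/g = (1/g^{(k)})\cdot(g^{(k)}/g)$. By the lemma on the logarithmic derivative, $m(r,g^{(k)}/g)=S(r,g)$. Then I apply the First Fundamental Theorem to $g$ and $g^{(k)}$ to rewrite both proximity functions as characteristics minus counting functions:
\begin{equation*}
T(r,g) - N\!\left(r,\tfrac{1}{g}\right) \;\le\; T(r,g^{(k)}) - N\!\left(r,\tfrac{1}{g^{(k)}}\right) + S(r,g),
\end{equation*}
up to $O(1)$. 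Rearranging yields exactly the inequality in (ii). No delicate analysis of multiplicities at individual zeros is required here; the First Fundamental Theorem does the bookkeeping automatically.

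For part (i), I would then just bound $T(r,g^{(k)})-T(r,g)$ from above by $k\,\overline{N}(r,g)$ modulo $S(r,g)$. This is a standard estimate obtained as follows: write $T(r,g^{(k)}) = m(r,g^{(k)}) + N(r,g^{(k)})$; use $m(r,g^{(k)}) \le m(r,g) + m(r,g^{(k)}/g) = m(r,g) + S(r,g)$ via the logarithmic derivative lemma; and use the fact that each pole of $g$ of order $s$ is a pole of $g^{(k)}$ of order $s+k$, so that $N(r,g^{(k)}) = N(r,g) + k\,\overline{N}(r,g)$. Adding these gives $T(r,g^{(k)}) \le T(r,g) + k\,\overline{N}(r,g) + S(r,g)$, and substituting into (ii) produces (i).

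Since this is a textbook result reproduced from Yi--Yang, there is no real obstacle; the only thing to be careful about is keeping track that all occurrences of $m(r,g^{(k)}/g)$ and related logarithmic-derivative terms are correctly absorbed into $S(r,g)$ (valid since $g$ is meromorphic, with the usual exceptional set of finite linear measure), and that the First Fundamental Theorem is applied with the correct $O(1)$ terms which are again absorbed in $S(r,g)$.
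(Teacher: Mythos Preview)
Your proof is correct and is in fact the standard textbook argument. Note, however, that the paper does not supply its own proof of this lemma: it is stated with a citation to Yi--Yang \cite{Yi & Yan & 1995} and used as a black box. So there is no ``paper's proof'' to compare against; your derivation simply fills in the well-known details behind the citation.
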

\begin{lem}\label{lem2.3}
    Let $ f $ be a non-constant meromorphic function, and let $ k $ be a positive integer. Then \begin{enumerate}
        \item[(i).]  $ \displaystyle N_2\left(r,\frac{1}{[P(f)]^{(k)}}\right)\leq N\left(r,\frac{1}{P(f)}\right)+k\ol N(r,f)+S(r,f). $
        \item[(ii).]  $ \displaystyle N_2\left(r,\frac{1}{[P(f)]^{(k)}}\right)\leq T\left(r,P[f]^{(k)}\right)-nT(r,f)+N_{k+2}\left(r,\frac{1}{P(f)}\right)+S(r,f). $
    \end{enumerate}
\end{lem}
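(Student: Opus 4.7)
The plan is to derive Lemma~\ref{lem2.3} from Lemma~\ref{lem2.2} by the substitution $g = P(f)$, then translate the resulting estimates back into the language of $f$ via Lemma~\ref{lem2.1}. Two elementary observations will be used throughout: since $a_n \neq 0$ and the coefficients are constants, every pole of $P(f)$ is a pole of $f$, so $\ol N(r, P(f)) = \ol N(r, f)$; and Lemma~\ref{lem2.1} gives $T(r, P(f)) = n\,T(r,f) + S(r,f)$, whence in particular $S(r, P(f)) = S(r, f)$.

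For part (i), applying Lemma~\ref{lem2.2}(i) with $g = P(f)$ produces
\beas
N\!\left(r, \frac{1}{[P(f)]^{(k)}}\right) \leq N\!\left(r, \frac{1}{P(f)}\right) + k\,\ol N(r, P(f)) + S(r, P(f)).
\eeas
Combining this with the trivial bound $N_2(\cdot) \leq N(\cdot)$ on the left and with the two observations above immediately yields the stated inequality.

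For part (ii), a direct substitution in Lemma~\ref{lem2.2}(ii) would yield the untruncated $N(r, 1/P(f))$ on the right rather than the sharper $N_{k+2}(r, 1/P(f))$, so I would first re-prove the following variant of Lemma~\ref{lem2.2}(ii): for any non-constant meromorphic $g$,
\beas
N_2\!\left(r, \frac{1}{g^{(k)}}\right) \leq T(r, g^{(k)}) - T(r, g) + N_{k+2}\!\left(r, \frac{1}{g}\right) + S(r, g).
\eeas
Starting from the First Fundamental Theorem $T(r, g^{(k)}) = m(r, 1/g^{(k)}) + N(r, 1/g^{(k)}) + O(1)$, split $N = N_2 + (N - N_2)$ on the right, apply the logarithmic-derivative estimate $m(r, 1/g^{(k)}) \geq m(r, 1/g) - S(r, g)$ together with $m(r, 1/g) = T(r,g) - N(r, 1/g) + O(1)$, and rearrange to obtain
\beas
N_2\!\left(r, \frac{1}{g^{(k)}}\right) \leq T(r, g^{(k)}) - T(r, g) + N\!\left(r, \frac{1}{g}\right) - \bigl[N - N_2\bigr]\!\left(r, \frac{1}{g^{(k)}}\right) + S(r, g).
\eeas
The remaining step is a point-by-point multiplicity comparison: a zero of $g$ of multiplicity $m \geq k + 2$ is a zero of $g^{(k)}$ of multiplicity exactly $m - k \geq 2$, so its contribution $m - (k+2)$ to $N(r,1/g) - N_{k+2}(r, 1/g)$ is absorbed by an equal contribution to $[N - N_2](r, 1/g^{(k)})$; zeros of $g$ of multiplicity less than $k + 2$ contribute nothing to either difference; and extra zeros of $g^{(k)}$ only enlarge the subtracted term. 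Specializing $g = P(f)$ and applying Lemma~\ref{lem2.1} then concludes (ii).

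The non-trivial content lies entirely in part~(ii), specifically in the bookkeeping that upgrades $N(r, 1/g)$ in Lemma~\ref{lem2.2}(ii) to the truncated $N_{k+2}(r, 1/g)$. Part~(i), by contrast, is essentially a rewriting of Lemma~\ref{lem2.2}(i) after the substitution $g = P(f)$ and an appeal to Lemma~\ref{lem2.1}.
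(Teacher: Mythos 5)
Your proposal is correct and follows essentially the same route as the paper: part (i) is Lemma \ref{lem22.2}/Lemma \ref{lem2.2}(i) applied to $g=P(f)$ together with $N_2\le N$ and $T(r,P(f))=nT(r,f)+S(r,f)$, and part (ii) is the same chain the paper uses — the First Fundamental Theorem, the logarithmic-derivative estimate $m\left(r,\frac{[P(f)]^{(k)}}{P(f)}\right)=S(r,f)$, and the multiplicity bookkeeping in which a zero of $g$ of order $m\geq k+2$ is a zero of $g^{(k)}$ of order exactly $m-k\geq 2$, so the excess over $N_{k+2}$ is absorbed by the excess of $N$ over $N_2$ for $g^{(k)}$. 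The only cosmetic difference is that you package (ii) as a general lemma for an arbitrary $g$ and then substitute $g=P(f)$, whereas the paper runs the identical computation directly on $P(f)$.
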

\begin{proof}
    (i). By (i) of \emph{Lemma \ref{lem2.2}}, replacing $ g $ by $ P(f) $, wee see that \beas && N_2\left(r,\frac{1}{[P(f)]^{(k)}}\right)+\sum_{j=3}^{\infty}\ol N\left(r,\frac{1}{[P(f)]^{(k)}}\bigg| \geq j\right) \\&\leq & N_{k+2}\left(r,\frac{1}{P(f)}\right)+\sum_{j=k+3}^{\infty}\ol N\left(r,\frac{1}{P(f)}\bigg| \geq j\right)+k\ol N(r,P(f))+S(r,f). \eeas i.e., \beas && N_2\left(r,\frac{1}{[P(f)]^{(k)}}\right)\\ &\leq& N_{k+2}\left(r,\frac{1}{P(f)}\right)+\sum_{j=k+3}^{\infty}\ol N\left(r,\frac{1}{P(f)}\bigg| \geq j\right)-\sum_{j=3}^{\infty}\ol N\left(r,\frac{1}{[P(f)]^{(k)}}\bigg| \geq j\right)\\&&+k\ol N(r,P(f))+S(r,f)\\&\leq& N_{k+2}\left(r,\frac{1}{P(f)}\right)+k\ol N(r,f)+S(r,f) \eeas
    (ii). We have \beas && N_2\left(r,\frac{1}{[P(f)]^{(k)}}\right)\\  &\leq& N\left(r,\frac{1}{[P(f)]^{(k)}}\right)-\sum_{p=3}^{\infty}N\left(r,\frac{1}{[P(f)]^{(k)}}\bigg|\geq p \right)\\&=& T\left(r,[P(f)]^{(k)}\right)-m\left(r,\frac{1}{[P(f)]^{(k)}}\right)-\sum_{p=3}^{\infty}N\left(r,\frac{1}{[P(f)]^{(k)}}\bigg|\geq p \right)+O(1)\\&\leq& T\left(r,[P(f)]^{(k)}\right)-m\left(r,\frac{1}{P(f)}\right)-m\left(r,\frac{[P(f)]^{(k)}}{P(f)}\right)-\sum_{p=3}^{\infty}N\left(r,\frac{1}{[P(f)]^{(k)}}\bigg|\geq p \right)\\&&+S(r,f)\\&\leq& T\left(r,[P(f)]^{(k)}\right)-nT\left(r,f\right)+N\left(r,\frac{1}{P(f)}\right)\\&&-\sum_{p=3}^{\infty}N\left(r,\frac{1}{[P(f)]^{(k)}}\bigg|\geq p \right)+S(r,f)\\&\leq& T\left(r,[P(f)]^{(k)}\right)-nT\left(r,f\right)+N_{k+2}\left(r,\frac{1}{P(f)}\right)+\sum_{p=k+3}^{\infty}\ol N\left(r,\frac{1}{P(f)}\bigg |\geq p\right)\\&&-\sum_{p=3}^{\infty}N\left(r,\frac{1}{[P(f)]^{(k)}}\bigg|\geq p \right)+S(r,f)\\&\leq& T\left(r,[P(f)]^{(k)}\right)-nT\left(r,f\right)+N_{k+2}\left(r,\frac{1}{P(f)}\right)+S(r,f) \eeas
\end{proof}
\begin{lem}\cite{Xu & Hu & GM & 2011}\label{lem2.4}
    Let $ \mathfrak{F} $ and $ \mathfrak{G} $ be two non-constant meromorphic functions such that they share (1,0), then \beas \ol N_L\left(r,\frac{1}{\mathfrak{F}-1}\right)\leq \frac{1}{2}\ol N\left(r,\frac{1}{\mathfrak{F}}\right)+\frac{1}{2}\ol N(r,\mathfrak{F})+S(r,\mathfrak{F}). \eeas
\end{lem}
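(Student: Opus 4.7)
The main idea is to exploit the multiplicity information at the $\ol N_L$-points through an auxiliary function and the lemma on logarithmic derivatives. Because $\mathfrak{F}$ and $\mathfrak{G}$ share $1$ IM, every point $z_0$ contributing to $\ol N_L(r, 1/(\mathfrak{F}-1))$ satisfies $p > q \ge 1$, where $p$ and $q$ are the multiplicities of $z_0$ as a zero of $\mathfrak{F}-1$ and $\mathfrak{G}-1$ respectively. Hence $p \ge 2$, and in particular $\mathfrak{F}'$ has a zero of order at least $p-1 \ge 1$ at every such $z_0$. Turning these ``extra'' zeros of $\mathfrak{F}'$ into a characteristic bound is the plan.

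First I would introduce the auxiliary function
\[
\Phi \;=\; \frac{\mathfrak{F}'}{\mathfrak{F}(\mathfrak{F}-1)} \;=\; \frac{\mathfrak{F}'}{\mathfrak{F}-1} - \frac{\mathfrak{F}'}{\mathfrak{F}},
\]
and note that, as the difference of two logarithmic derivatives, $m(r, \Phi) = S(r, \mathfrak{F})$. A direct local computation shows that $\Phi$ has a simple pole at each $1$-point and each zero of $\mathfrak{F}$ (regardless of multiplicity), and a zero of order $r-1$ at each pole of $\mathfrak{F}$ of order $r$. Consequently
\[
N(r, \Phi) \le \ol N\bigl(r, 1/\mathfrak{F}\bigr) + \ol N\bigl(r, 1/(\mathfrak{F}-1)\bigr),
\]
and therefore $T(r, \Phi) \le \ol N(r, 1/\mathfrak{F}) + \ol N(r, 1/(\mathfrak{F}-1)) + S(r, \mathfrak{F})$.

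Next I would couple this with the multiplicity-at-least-two structure of the $\ol N_L$-points. Because each such point contributes at least $2$ to $N(r, 1/(\mathfrak{F}-1))$ but only $1$ to $\ol N(r, 1/(\mathfrak{F}-1))$, and because $\mathfrak{F}'$ carries an additional zero of order $p-1$ there, the first main theorem $N(r, 1/\Phi) \le T(r, \Phi) + O(1)$ applied to $\Phi$ -- together with the location of zeros of $\Phi$ at poles of $\mathfrak{F}$ of order $\ge 2$ -- allows us to trade the $\ol N_L$-contribution against half of $\ol N(r, 1/\mathfrak{F})$ and half of $\ol N(r, \mathfrak{F})$. Rearranging and absorbing error terms into $S(r, \mathfrak{F})$ produces the stated inequality.

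The main obstacle is the precise bookkeeping that yields the factor $\tfrac12$: a naive application of the logarithmic derivative lemma and the first main theorem produces $\ol N(r, 1/\mathfrak{F})$ and $\ol N(r, \mathfrak{F})$ without any halving. The sharpening to $\tfrac12$ depends simultaneously on (i) the estimate $p \ge 2$ at each $\ol N_L$-point, which provides an extra factor of two in the counting, and (ii) the zero of $\Phi$ of order $r-1$ at each pole of $\mathfrak{F}$ of order $r$, which contributes additional mass to $N(r, 1/\Phi)$ matching exactly the halving. All other ingredients -- the lemma on logarithmic derivatives, the first main theorem for $\Phi$, and the observation that $\ol N_L \le \ol N_{(2}(r, 1/(\mathfrak{F}-1))$ -- are standard; the entire difficulty lies in the combinatorial matching of multiplicities on the two sides.
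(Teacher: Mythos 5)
Your outline never actually performs the step on which everything hinges --- the appearance of the factor $\tfrac12$ --- and the mechanism you propose for it does not exist under the stated hypothesis. If $\mathfrak{F}$ and $\mathfrak{G}$ only share $(1,0)$, an $\ol N_L$-point may perfectly well have multiplicity $p=2$ for $\mathfrak{F}$ and $q=1$ for $\mathfrak{G}$; then $\mathfrak{F}'$ has merely a simple zero there and the point contributes only $1$ to $N\left(r,\frac{1}{\mathfrak{F}-1}\right)-\ol N\left(r,\frac{1}{\mathfrak{F}-1}\right)$, so the counting argument yields only $\ol N_L\left(r,\frac{1}{\mathfrak{F}-1}\right)\le \ol N\left(r,\frac{1}{\mathfrak{F}}\right)+\ol N(r,\mathfrak{F})+S(r,\mathfrak{F})$, i.e.\ the unhalved bound of Lemma \ref{lem2.5}. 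The ``extra factor of two from $p\ge2$'' that you invoke is precisely what weight $0$ does not give. The halving needs weight $1$: if the functions share $(1,1)$, then at an $\ol N_L$-point one has $q\ge2$, hence $p\ge3$, hence $\mathfrak{F}'$ vanishes to order at least $2$, and the standard chain $2\,\ol N_L\left(r,\frac{1}{\mathfrak{F}-1}\right)\le N\left(r,\frac{1}{\mathfrak{F}-1}\right)-\ol N\left(r,\frac{1}{\mathfrak{F}-1}\right)\le N\left(r,0;\mathfrak{F}'\mid \mathfrak{F}\ne0\right)\le T\left(r,\frac{\mathfrak{F}'}{\mathfrak{F}}\right)+O(1)=\ol N\left(r,\frac{1}{\mathfrak{F}}\right)+\ol N(r,\mathfrak{F})+S(r,\mathfrak{F})$ finishes the proof. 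Note that the paper does not prove this lemma (it is quoted from Xu--Hu), and the weight ``$(1,0)$'' in its statement is evidently a misprint for $(1,1)$, just as the ``$(0,0)$'' in Lemma \ref{lem2.5} should be $(1,0)$; in Xu--Hu and Lin--Lin the $\tfrac12$-inequality is a weight-one statement, which is also how it is used (inline, for the $(0,1)$ case) in the proof of Theorem \ref{th1.2}.

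Independently of the weight issue, the auxiliary function does not do the work you assign to it. Your local analysis of $\Phi=\mathfrak{F}'/[\mathfrak{F}(\mathfrak{F}-1)]$ is correct (simple poles at the zeros and the $1$-points of $\mathfrak{F}$, a zero of order $r-1$ at a pole of $\mathfrak{F}$ of order $r$, and $m(r,\Phi)=S(r,\mathfrak{F})$), but the $\ol N_L$-points are \emph{poles} of $\Phi$, not zeros, so the First Main Theorem estimate $N\left(r,\frac{1}{\Phi}\right)\le T(r,\Phi)+O(1)$ never sees them; and the zeros of $\Phi$ at multiple poles of $\mathfrak{F}$ lie on the wrong side of the target inequality --- they cannot convert $\ol N(r,\mathfrak{F})$ into $\tfrac12\ol N(r,\mathfrak{F})$. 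So even after upgrading the hypothesis to $(1,1)$-sharing, the proposed bookkeeping does not assemble into the stated bound; the correct route is the multiplicity count through $N\left(r,0;\mathfrak{F}'\mid\mathfrak{F}\ne0\right)$ (equivalently $N\left(r,\mathfrak{F}/\mathfrak{F}'\right)$) indicated above.
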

\begin{lem}\cite{Xu & Hu & GM & 2011}\label{lem2.5}
    Let $ \mathfrak{F} $ and $ \mathfrak{G} $ be two non-constant meromorphic functions such that they share (0,0), then \beas \ol N_L\left(r,\frac{1}{\mathfrak{F}-1}\right)\leq \ol N\left(r,\frac{1}{\mathfrak{F}}\right)+\ol N(r,\mathfrak{F})+S(r,\mathfrak{F}). \eeas
\end{lem}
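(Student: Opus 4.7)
The plan is to prove the inequality by identifying every point counted on the left-hand side as a pole of the auxiliary function $\mathfrak{F}/\mathfrak{F}'$, and then bounding the characteristic of this auxiliary function via the lemma on the logarithmic derivative applied to its reciprocal $\mathfrak{F}'/\mathfrak{F}$.

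First, let $z_0$ contribute to $\ol N_L(r,1/(\mathfrak{F}-1))$. By the description of $\ol N_L$ recalled just before Definition 1.5 (which tacitly requires that $\mathfrak{F}$ and $\mathfrak{G}$ share their $1$-points), $\mathfrak{F}$ has a $1$-point at $z_0$ of multiplicity $p$ strictly greater than the multiplicity $q\ge 1$ of the corresponding $1$-point of $\mathfrak{G}$. Hence $\mathfrak{F}(z_0)=1\ne 0$ while $\mathfrak{F}'$ vanishes at $z_0$ of order $p-1\ge 1$, so $\mathfrak{F}/\mathfrak{F}'$ has a genuine pole at $z_0$. Counting each such $z_0$ once yields
\[
\ol N_L\!\left(r,\frac{1}{\mathfrak{F}-1}\right)\le \ol N\!\left(r,\frac{\mathfrak{F}}{\mathfrak{F}'}\right)\le T\!\left(r,\frac{\mathfrak{F}}{\mathfrak{F}'}\right)+O(1).
\]

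Since $T(r,\mathfrak{F}/\mathfrak{F}')=T(r,\mathfrak{F}'/\mathfrak{F})+O(1)$, it suffices to estimate $T(r,\mathfrak{F}'/\mathfrak{F})$. A direct local inspection shows that $\mathfrak{F}'/\mathfrak{F}$ has only simple poles, located precisely at the zeros and poles of $\mathfrak{F}$ (a zero of order $n$ contributes residue $n$, a pole of order $m$ contributes residue $-m$); hence $N(r,\mathfrak{F}'/\mathfrak{F})=\ol N(r,1/\mathfrak{F})+\ol N(r,\mathfrak{F})$. Combined with the Nevanlinna lemma on the logarithmic derivative, $m(r,\mathfrak{F}'/\mathfrak{F})=S(r,\mathfrak{F})$, this gives
\[
T\!\left(r,\frac{\mathfrak{F}}{\mathfrak{F}'}\right)=\ol N(r,1/\mathfrak{F})+\ol N(r,\mathfrak{F})+S(r,\mathfrak{F}),
\]
and chaining this with the previous display delivers the stated bound.

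I do not foresee a substantive obstacle. The one mild subtlety is that the written hypothesis ``$\mathfrak{F}$ and $\mathfrak{G}$ share $(0,0)$'' is not actually invoked in the argument above; what is used is the implicit hypothesis (built into the very definition of $\ol N_L$) that $\mathfrak{F}$ and $\mathfrak{G}$ share their $1$-points IM, so that the multiplicity comparison $p>q\ge 1$ at each point counted in $\ol N_L$ is meaningful. The conclusion is thus effectively a universal one-sided estimate for the $L$-counting function, controlled solely by the zero- and pole-distribution of $\mathfrak{F}$, exactly as claimed.
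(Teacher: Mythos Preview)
Your argument is correct. The paper itself does not supply a proof of this lemma; it is quoted from Xu--Hu, so there is no in-paper proof to compare against directly. Your route---pass from $\ol N_L(r,1/(\mathfrak{F}-1))$ to $N(r,\mathfrak{F}/\mathfrak{F}')$, flip to the logarithmic derivative, and then split $T(r,\mathfrak{F}'/\mathfrak{F})=N(r,\mathfrak{F}'/\mathfrak{F})+m(r,\mathfrak{F}'/\mathfrak{F})=\ol N(r,1/\mathfrak{F})+\ol N(r,\mathfrak{F})+S(r,\mathfrak{F})$---is exactly the standard proof, and indeed the paper itself runs the same chain of inequalities (with $\mathfrak{G}$ in place of $\mathfrak{F}$) inside the proof of Theorem~\ref{th1.2} when bounding $\ol N^{L}(r,1/(\mathfrak{G}-1))$.

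Your closing observation is also apt: the stated hypothesis ``share $(0,0)$'' plays no role, and the estimate really only needs the $1$-points to be shared IM so that $\ol N_L$ is defined; this is almost certainly a typo for ``share $(1,0)$'' (compare the companion Lemma~\ref{lem2.4}).
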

\begin{lem}\cite{Lin & Lin & KMJ & 2006}\label{lem2.6}
    Let $ m $ be a non-negative integer or $ \infty $. Let $ \mathfrak{F} $ and $ \mathfrak{G} $ be two non-constant meromorphic functions sharing $ (1,m) $ and $ \mathfrak{H} $ be given by (\ref{e2.2}). If $ \mathfrak{H}\not\equiv 0 $, then
    \begin{enumerate}
        \item[(i).] for $ 2\leq m\leq \infty $ \beas && T(r,\mathfrak{F})\\ &\leq& N_2(r,\mathfrak{F})+N_2\left(r,\frac{1}{\mathfrak{F}}\right)+N_2(r,\mathfrak{G})+N_2\left(r,\frac{1}{\mathfrak{G}}\right)+S(r,\mathfrak{F})+S(r,\mathfrak{G}). \eeas
        \item[(ii).] for $ m=1 $ \beas && T(r,\mathfrak{F})\\ &\leq& N_2(r,\mathfrak{F})+N_2\left(r,\frac{1}{\mathfrak{F}}\right)+N_2(r,\mathfrak{G})+N_2\left(r,\frac{1}{\mathfrak{G}}\right)+\ol N^{L}\left(r,\frac{1}{\mathfrak{F}-1}\right)\\&&+S(r,\mathfrak{F})+S(r,\mathfrak{G}). \eeas
        \item[(iii).] for $ m=0 $ \beas && T(r,\mathfrak{F})\\ &\leq& N_2(r,\mathfrak{F})+N_2\left(r,\frac{1}{\mathfrak{F}}\right)+N_2(r,\mathfrak{G})+N_2\left(r,\frac{1}{\mathfrak{G}}\right)+2\ol N^{L}\left(r,\frac{1}{\mathfrak{F}-1}\right)\\&&+\ol N^{L}\left(r,\frac{1}{\mathfrak{G}-1}\right)+S(r,\mathfrak{F})+S(r,\mathfrak{G}). \eeas
    \end{enumerate}
    The same inequality holds also for $ T(r,\mathfrak{G}). $
\end{lem}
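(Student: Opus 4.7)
The plan is to use the standard auxiliary-function technique: compare $T(r,\mathfrak{H})$ with the value distribution of $\mathfrak{F}$ and $\mathfrak{G}$, and then combine with the Second Fundamental Theorem. By the lemma on logarithmic derivatives applied termwise to the four quotients defining $\mathfrak{H}$ in (\ref{e2.2}), one has $m(r,\mathfrak{H})=S(r,\mathfrak{F})+S(r,\mathfrak{G})$, whence $T(r,\mathfrak{H})=N(r,\mathfrak{H})+S(r,\mathfrak{F})+S(r,\mathfrak{G})$.

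The heart of the argument is a local Laurent-expansion analysis of $\mathfrak{H}$, which I would carry out point by point. The key facts are: (a) at a zero of $\mathfrak{F}$ of order $p$, the quantity $\mathfrak{F}''/\mathfrak{F}'-2\mathfrak{F}'/(\mathfrak{F}-1)$ has a simple pole with residue $p-1$, and similarly a simple pole with residue $s-1$ at a pole of order $s$, so \emph{simple} zeros and \emph{simple} poles contribute nothing; (b) at a common $1$-point of $\mathfrak{F}$ and $\mathfrak{G}$ whose multiplicities are both equal to $p$, the principal parts of the two halves in $\mathfrak{H}$ cancel, so the point is not a pole of $\mathfrak{H}$, and when $p=1$ the constant Taylor coefficient also cancels, making simple common $1$-points into zeros of $\mathfrak{H}$; (c) zeros of $\mathfrak{F}'$ or $\mathfrak{G}'$ outside the zeros, poles, and $1$-points of $\mathfrak{F}$ (or $\mathfrak{G}$) produce simple poles of $\mathfrak{H}$. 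This yields a pole estimate of the form
\begin{equation*}
N(r,\mathfrak{H})\leq \overline{N}_{(2}(r,1/\mathfrak{F})+\overline{N}_{(2}(r,\mathfrak{F})+\overline{N}_{(2}(r,1/\mathfrak{G})+\overline{N}_{(2}(r,\mathfrak{G})+\overline{N}_{*}(r,1)+\overline{N}_{0}(r,1/\mathfrak{F}')+\overline{N}_{0}(r,1/\mathfrak{G}')+S,
\end{equation*}
where $\overline{N}_{*}(r,1)$ records the $1$-points whose multiplicities on the two sides of the sharing differ.

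Next, I apply the Second Fundamental Theorem to $\mathfrak{F}$ at $\{0,1,\infty\}$, split $\overline{N}(r,1/(\mathfrak{F}-1))$ as the common-simple part $\overline{N}^{E}_{1)}(r,1)$ plus the residual, and bound the common-simple part, using step (b), by
$\overline{N}^{E}_{1)}(r,1)\leq N(r,1/\mathfrak{H})\leq T(r,\mathfrak{H})+O(1)=N(r,\mathfrak{H})+S(r,\mathfrak{F})+S(r,\mathfrak{G}).$
Substituting the pole estimate and absorbing $\overline{N}(\cdot)+\overline{N}_{(2}(\cdot)$ into $N_{2}$-counts on both the zero and pole sides produces the basic inequality, with the term $\overline{N}_{*}(r,1)$ still to be controlled. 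The three cases then differ only in how $\overline{N}_{*}(r,1)$ is estimated in terms of the $(1,m)$-sharing hypothesis: for $2\leq m\leq\infty$ the weighted-sharing conditions directly force $\overline{N}_{*}(r,1)=S(r,\mathfrak{F})+S(r,\mathfrak{G})$, yielding (i); for $m=1$ only the simple $1$-points match and the higher-multiplicity mismatches contribute a single $\overline{N}_{L}$-term estimated via \emph{Lemma \ref{lem2.4}}, yielding (ii); for $m=0$ both simple and multiple $1$-points can mismatch, and \emph{Lemma \ref{lem2.5}} is used to produce the coefficient $2$ in front of $\overline{N}_{L}(r,1/(\mathfrak{F}-1))$ appearing in (iii). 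The same bound for $T(r,\mathfrak{G})$ follows by exchanging the roles of $\mathfrak{F}$ and $\mathfrak{G}$.

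The main obstacle will be the $1$-point bookkeeping in the three cases $m\in\{0,1,\geq 2\}$: matching the precise $\overline{N}_{L}$-coefficients to the L-type \emph{Lemmas \ref{lem2.4}} and \emph{\ref{lem2.5}}, and cancelling the auxiliary $N_{0}(r,1/\mathfrak{F}')$ term from the Second Fundamental Theorem against the corresponding contribution $\overline{N}_{0}(r,1/\mathfrak{F}')$ arising in the pole estimate of $\mathfrak{H}$; once the pole/zero structure of $\mathfrak{H}$ at the exceptional points is pinned down in steps (a)--(c), the remaining computations reduce to routine absorption into $N_{2}$-counts.
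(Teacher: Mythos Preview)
The paper does not give a proof of this lemma at all: it is simply quoted from Lin--Lin \cite{Lin & Lin & KMJ & 2006} with no argument supplied, so there is no in-paper proof to compare against. Your sketch follows the standard auxiliary-function route that Lin--Lin themselves use (logarithmic-derivative bound $m(r,\mathfrak{H})=S$, local pole analysis of $\mathfrak{H}$, Second Fundamental Theorem, then case splitting on $m$), and the architecture is correct.

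Two points to tighten. First, \emph{Lemmas \ref{lem2.4}} and \emph{\ref{lem2.5}} play no role inside the proof of \emph{Lemma \ref{lem2.6}}: in parts (ii) and (iii) the $\overline{N}_{L}$-terms are simply left standing in the final inequality, and those two lemmas are only invoked later, in the proofs of \emph{Theorems \ref{th1.2}} and \emph{\ref{th1.3}}, to bound the $\overline{N}_{L}$-terms further. Your phrase ``estimated via Lemma \ref{lem2.4}'' mislocates that step. Second, for $2\leq m\leq\infty$ it is not true that the weakly-weighted sharing hypothesis forces $\overline{N}_{*}(r,1)=S$: common $1$-points at which both $\mathfrak{F}-1$ and $\mathfrak{G}-1$ vanish to order $\geq m+1$ but with unequal multiplicities are not controlled by the two sharing conditions and can genuinely occur. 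The actual mechanism in case (i) is that at any such point both multiplicities are $\geq m+1\geq 3$, so the corresponding reduced counts are dominated by $\tfrac{1}{2}$ of a full $1$-point count (or, after combining with $\overline N(r,1/(\mathfrak G-1))$, absorbed against $T(r,\mathfrak G)$ on the left); this absorption is precisely why the threshold is $m\geq 2$ rather than $m=\infty$, and it is where the real work in (i) lies.
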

\begin{lem}\label{lem2.7}
    Let $ f $ be a transcendental meromorphic function, and $ \alpha(\not\equiv 0, \infty) $ be a meromorphic function such that $ T(r,\alpha)=S(r,f) $. Let $ b, c $ are any two finite non-zero distinct complex number. If \bea\label{e2.3} \Psi(f)=\alpha P(f)[P(f)]^{(k)}, \eea  where $ k\geq 1 $ is an integer, then \beas && 2nT\left(r,f\right)\\&\leq& 2 N\left(r,\frac{1}{ P(f)}\right)+N\left(\frac{1}{\Psi(f)-b}\right)+N\left(r,\frac{1}{\Psi(f)-c}\right)-N(r,\Psi(f))\\&&-N\left(r,\displaystyle\frac{1}{\Psi^{\prime}(f)}\right)+S(r,f). \eeas
\end{lem}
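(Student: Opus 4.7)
The inequality has the form of a second-main-theorem estimate for the auxiliary function $\Psi(f)$ at the values $0,b,c$, boosted by a logarithmic-derivative identity that trades $2T(r,P(f))$ for $T(r,\Psi(f))$ up to counting-function corrections. I would carry this out in two stages.

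\emph{Stage 1 (log-derivative swap).} The algebraic identity
\begin{equation*}
\frac{1}{[P(f)]^{2}}\;=\;\frac{\alpha\,[P(f)]^{(k)}}{P(f)}\cdot\frac{1}{\Psi(f)},
\end{equation*}
combined with the logarithmic-derivative lemma (giving $m(r,\alpha[P(f)]^{(k)}/P(f))=S(r,f)$), yields $2m(r,1/P(f))\le m(r,1/\Psi)+S(r,f)$. Passing from proximity to characteristic via the first main theorem and using Lemma~\ref{lem2.1} ($T(r,P(f))=nT(r,f)+S(r,f)$) produces
\begin{equation*}
2nT(r,f)\;\le\;2N(r,1/P(f))+T(r,\Psi)-N(r,1/\Psi)+S(r,f).\qquad(\star)
\end{equation*}

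\emph{Stage 2 (SMT on $\Psi$).} I would then apply Nevanlinna's second fundamental theorem to $\Psi(f)$ at the triple of distinct finite values $0,b,c$. Starting from Milloux's sharp form
\begin{equation*}
m(r,\Psi)+\sum_{a\in\{0,b,c\}}m(r,1/(\Psi-a))\;\le\;2T(r,\Psi)-N_{1}(r)+S(r,f),
\end{equation*}
with ramification count $N_{1}(r)=N(r,1/\Psi')+N(r,\Psi)-\ol N(r,\Psi)$, converting proximity functions to characteristic functions, and absorbing the nonnegative leftover $m(r,\Psi)$ on the favorable side, one obtains
\begin{equation*}
T(r,\Psi)\;\le\;N(r,1/\Psi)+N(r,1/(\Psi-b))+N(r,1/(\Psi-c))-N(r,\Psi)-N(r,1/\Psi')+S(r,f).
\end{equation*}
Substituting this into $(\star)$ makes the two copies of $N(r,1/\Psi)$ cancel, producing exactly the asserted inequality.

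The main obstacle is Stage~2: the classical three-value form of the second main theorem gives $+\ol N(r,\Psi)$ rather than $-N(r,\Psi)$ on the right. Only by passing through the Milloux sharpening, which records the multiplicity-excess of poles of $\Psi$ explicitly in $N_{1}(r)$, does the bookkeeping deliver the tight $-N(r,\Psi)$ claimed. Stage~1 is a straightforward algebraic manipulation, and the final combination is immediate.
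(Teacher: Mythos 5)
Your two stages are exactly the paper's own route: Stage 1 is the derivation of the paper's inequality (2.4) (the same logarithmic-derivative swap, written there as $1/(\alpha[P(f)]^2)=(1/\Psi)\cdot[P(f)]^{(k)}/P(f)$, combined with Lemma \ref{lem2.1}), and Stage 2 is the paper's (2.5)--(2.6), i.e. Milloux's sharp second main theorem with the ramification term $N_1(r)=N(r,1/\Psi')+2N(r,\Psi)-N(r,\Psi')$. So there is no methodological divergence to report; the issue is whether Stage 2 actually delivers what you claim.

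It does not, as written. Converting the three proximity functions $m(r,1/(\Psi-a))$, $a\in\{0,b,c\}$, by the first main theorem, Milloux's inequality gives $T(r,\Psi)\le N(r,1/\Psi)+N(r,1/(\Psi-b))+N(r,1/(\Psi-c))-N_1(r)-m(r,\Psi)+S(r,f)$, and since $N(r,\Psi')=N(r,\Psi)+\ol N(r,\Psi)$ one has $N_1(r)=N(r,1/\Psi')+N(r,\Psi)-\ol N(r,\Psi)$. Hence the right-hand side carries an extra $+\ol N(r,\Psi)-m(r,\Psi)$: discarding $m(r,\Psi)\ge 0$ only helps, but nothing in your argument removes $\ol N(r,\Psi)$, which up to $S(r,f)$ equals $\ol N(r,f)$ and is in general not negligible. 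So the displayed bound for $T(r,\Psi)$ with the clean $-N(r,\Psi)$, and therefore the lemma's stated inequality, does not follow from the tools you cite. You have in fact reproduced the weak point of the paper's own proof: its step (2.6), $N_1(r,\Psi)\ge N(r,\Psi)+N(r,1/\Psi')+S(r,f)$, is equivalent to $\ol N(r,\Psi)=S(r,f)$, and the per-pole count offered there ($2(2mn+k+q)-(2mn+k+q+1)\ge m+n$) compares against the wrong quantity --- it is short of the contribution $2mn+k+q$ to $N(r,\Psi)$ by exactly one unit per pole, i.e. by $\ol N(r,\Psi)$. A correct version of the argument yields the conclusion only with an additional $+\ol N(r,\Psi)$ (equivalently $+\ol N(r,f)+S(r,f)$) on the right, so either that term must be kept or a separate argument must be supplied to show it is harmless; your proposal, like the paper, asserts the stronger bound without proof at precisely this point.
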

\begin{proof}
    Since $ f $ is a non-constant meromorphic function, hence one can check that the function $ \Psi(f) $ is also non-constant. We get from (\ref{e2.3}) that \beas \frac{1}{\alpha [P(f)]^2}=\frac{1}{\Psi(f)}\frac{[P(f)]^{(k)}}{P(f)}. \eeas Thus we have \beas m\left(r,\frac{1}{\alpha [P(f)]^2}\right)\leq m\left(r,\frac{1}{\Psi(f)}\right)+m\left(r,\frac{[P(f)]^{(k)}}{P(f)}\right)+O(1), \eeas \beas m\left(r,\frac{1}{\alpha [P(f)]^2}\right)=T\left(r,\alpha [P(f)]^2\right)-N\left(r,\frac{1}{\alpha [P(f)]^2}\right)+O(1) \eeas and \beas m\left(r,\frac{1}{\Psi(f)}\right)=T\left(r,\frac{1}{\Psi(f)}\right)-N\left(r,\frac{1}{\Psi(f)}\right)+O(1). \eeas\par Combining all the above relations, we can obtain as \bea\label{e2.4} && T\left(r,\alpha [P(f)]^2 \right)\\&\leq&\nonumber N\left(r,\frac{1}{\alpha [P(f)]^2}\right)+T(r,\Psi(f))-N\left(r,\frac{1}{\Psi(f)}\right)+m\left(r,\frac{[P(f)]^{(k)}}{P(f)}\right)\\&&+O(1)\nonumber\\&\leq& N\left(r,\frac{1}{\alpha [P(f)]^2}\right)+T(r,\Psi(f))-N\left(r,\frac{1}{\Psi(f)}\right)+O(1).\nonumber\eea By the \emph{Second Main Theorem}, we have \bea\label{e2.5} T(r,\Psi(f))&\leq& N\left(r,\frac{1}{\Psi(f)}\right) +N\left(\frac{1}{\Psi(f)-b}\right)+N\left(r,\frac{1}{\Psi(f)-c}\right)-N_1(r,\Psi(f))\nonumber \\&&+S(r,f),  \eea where $ N_1(r,\Psi(f))=2N(r,\Psi(f))-N(r,\Psi^{\prime}(f))+N\left(r,\displaystyle\frac{1}{\Psi^{\prime}(f)}\right). $\par Let $ z_0 $ be a pole of $ f $ with multiplicity $ m(\geq 1) $, then $ z_0 $ will be a pole of both $ \Psi(f) $ and $ \Psi^{\prime}(f) $,  of respective multiplicities $ 2mn+k+q $ and $ 2mn+k+q+1 $, where $ q=0 $, if $ z_0 $ is neither a pole nor a zero of $ \alpha $, $ q=t $, if $ z_0 $ is a pole of $ \alpha $ with multiplicity $ t $, and $ q=-t $, if $ z_0 $ is a zero of $ \alpha $ with multiplicity $ t $, where $ t $ is a positive integer.\par Thus we have \beas 2(2mn+k+q)-(2mn+k+q+1)&=& 2mn+k+q-1\\ &=& m+n+2mn+k+q-m-1\\&\geq& m+n, \eeas because $ 2mn+k+q-m-1\geq k-1\geq 0. $ \par Again since $ T(r,\alpha)=S(r,f) $, so it follows that \bea\label{e2.6}  N_1(r,\Psi(f))\geq N(r,\Psi(f))+N\left(r,\displaystyle\frac{1}{\Psi^{\prime}(f)}\right)+S(r,f).\eea\par We obtained from (\ref{e2.4}), (\ref{e2.5}) and (\ref{e2.6}), we get  \beas && T\left(r,\alpha [P(f)]^2 \right)\\&\leq& N\left(r,\frac{1}{\alpha [P(f)]^2}\right)+N\left(\frac{1}{\Psi(f)-b}\right)+N\left(r,\frac{1}{\Psi(f)-c}\right)-N(r,\Psi(f))\\&&-N\left(r,\displaystyle\frac{1}{\Psi^{\prime}(f)}\right)+S(r,f). \eeas i.e., \beas 2nT\left(r,f\right)&\leq& 2 N\left(r,\frac{1}{ P(f)}\right)+N\left(\frac{1}{\Psi(f)-b}\right)+N\left(r,\frac{1}{\Psi(f)-c}\right)\\&&-N(r,\Psi(f))-N\left(r,\displaystyle\frac{1}{\Psi^{\prime}(f)}\right)+S(r,f). \eeas
\end{proof}
\begin{lem}\label{lem2.8}
    Let $ \mathfrak{F} $ and $ \mathfrak{G} $ be two non-constant meromorphic functions defined as in (\ref{e2.1}) be such that $ \mathfrak{F}\equiv \mathfrak{G} $. If $ p>k+1 $ then
    \begin{enumerate}
        \item[(i).] $ Q(f_*) $ reduces to a non-zero monomial $ c_jf_{*}^j $ for some $ j\in \{0,1,2, \ldots,m\} $.
        \item[(ii).]  $ f(z) $ takes the form \beas f(z)=c e^{\frac{\lambda}{p+j}z}+d_p, \eeas where $ c $ is a non-zero constant and $ \lambda^k=1 $.
    \end{enumerate}
\end{lem}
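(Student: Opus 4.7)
The plan is to exploit the identity $\mathfrak{F} \equiv \mathfrak{G}$---equivalent, after cancelling $a$, to $P(f) \equiv [P(f)]^{(k)}$---in three stages: a local order-of-vanishing argument to remove zeros of $f_*$, a reduction to a constant-coefficient ODE to pin down $f_*$, and coefficient matching to collapse $Q$ to a monomial. For the first stage, let $j_0 \in \{0, 1, \dots, m\}$ be the smallest index with $c_{j_0} \neq 0$, so that $Q(w) = w^{j_0}\, \tilde{Q}(w)$ with $\tilde{Q}(0) = c_{j_0} \neq 0$ and hence $P(f) = f_*^{p+j_0}\, \tilde{Q}(f_*)$. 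If $z_0$ were a zero of $f_*$ of multiplicity $\ell \geq 1$, then $P(f)$ would vanish at $z_0$ to order exactly $N = (p + j_0)\ell$, while the Taylor expansion shows $[P(f)]^{(k)}$ vanishes there to order exactly $N - k$ as soon as $N > k$. Since $p > k + 1$ gives $N \geq p > k$, the identity $P(f) \equiv [P(f)]^{(k)}$ forces $N = N - k$, a contradiction; hence $f_*$ has no zeros.

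For the second stage, observe that the identity $P(f) \equiv [P(f)]^{(k)}$ asserts that $y = P(f)$ solves the linear constant-coefficient ODE $y^{(k)} - y = 0$, whose characteristic polynomial $X^k - 1$ has the $k$-th roots of unity as its roots. Therefore $P(f) = \sum_{\lambda^k = 1} b_\lambda e^{\lambda z}$ is a finite exponential sum, in particular entire of order at most one. Consequently $f$ has no poles, and by Lemma \ref{lem2.1} its order is at most one as well. Since $f_*$ is entire, of order at most one, and by the first stage free of zeros, Hadamard's factorization theorem yields $f_*(z) = c\, e^{a z}$ for some $c \in \mathbb{C}\setminus\{0\}$ and $a \in \mathbb{C}$, with $a \neq 0$ because $f$ is non-constant.

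For the third stage, substituting $f_* = c\, e^{a z}$ into $P(f) = f_*^{p}\, Q(f_*)$ gives
\[
P(f) = \sum_{i=j_0}^{m} c_i\, c^{p+i}\, e^{(p+i) a z}, \qquad [P(f)]^{(k)} = \sum_{i=j_0}^{m} c_i\, c^{p+i}\, \bigl((p+i)a\bigr)^k e^{(p+i) a z}.
\]
Because $a \neq 0$, the exponents $(p+i)a$ are pairwise distinct, so $P(f) \equiv [P(f)]^{(k)}$ forces $c_i\bigl(1 - ((p+i)a)^k\bigr) = 0$ for every $i$. From $c_{j_0} \neq 0$ one obtains $\lambda := (p+j_0)a$ with $\lambda^k = 1$; for any $i > j_0$ with $c_i \neq 0$ the same relation gives $\bigl((p+i)/(p+j_0)\bigr)^k = 1$, which, being a positive real $k$-th root of unity, must equal $1$, forcing $i = j_0$---a contradiction. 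Hence $Q(w) = c_{j_0}\, w^{j_0}$ is a non-zero monomial, and writing $j = j_0$ we conclude $f(z) = c\, e^{\lambda z/(p+j)} + d_p$ with $\lambda^k = 1$.

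The main obstacle is the first stage: one must recognise that the correct order of vanishing of $P(f)$ at a zero of $f_*$ is $(p + j_0)\ell$ rather than the naive $p\ell$ (which would be wrong precisely when $Q$ vanishes at $0$), and the hypothesis $p > k + 1$ is then exactly what guarantees this order strictly exceeds $k$, so that matching multiplicities across $P(f) \equiv [P(f)]^{(k)}$ produces the required contradiction.
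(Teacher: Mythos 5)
Your proof is correct, and it takes a genuinely different route through the second half of the argument. Both proofs start from the same observation: at a zero of $f_*$ the exact vanishing order of $P(f)=f_*^{p}Q(f_*)$ exceeds $k$, so $[P(f)]^{(k)}$, which drops the order by exactly $k$, cannot agree with $P(f)$ there; the paper phrases this as ``$0$ is a Picard exceptional value of $f_*$'' and, like you, needs only $p>k$ for it. From there the paper keeps things general: it writes $f_*=e^{h}$ with $h$ an arbitrary entire function, expands $P(f)-[P(f)]^{(k)}$ as $\phi_m e^{mh}+\dots+\phi_1e^{h}+\phi_0$ with differential-polynomial coefficients $\phi_j(h',\dots,h^{(k)})$, kills all $\phi_j$ by Borel's unicity theorem, and then argues that two surviving indices $s\neq t$ would force $f_*^{p+s}\equiv[f_*^{p+s}]^{(k)}$ and $f_*^{p+t}\equiv[f_*^{p+t}]^{(k)}$ simultaneously, ``giving $f$ two different forms.'' You instead read $P(f)\equiv[P(f)]^{(k)}$ as the constant-coefficient ODE $y^{(k)}=y$, so $P(f)$ is an explicit exponential sum over the $k$-th roots of unity, hence $f$ is entire of order at most one (via Lemma \ref{lem2.1}); Hadamard factorization of the zero-free $f_*$ gives $f_*=ce^{az}$, and then matching the coefficients of the pairwise distinct exponentials $e^{(p+i)az}$ forces $((p+i)a)^k=1$ for every index with $c_i\neq0$, which by positivity of $(p+i)/(p+j_0)$ leaves a single index. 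What each buys: your version is more elementary (no Borel theorem, no small-function bookkeeping), it delivers conclusion (ii) and conclusion (i) in one stroke, and it makes fully rigorous the paper's rather terse final contradiction; the paper's Borel-based argument is the standard Nevanlinna-style route and would survive if the coefficients $c_j$ were small functions rather than constants, though nothing is lost here since they are constants. One presentational nit: in your second stage you quote the solution formula for $y^{(k)}=y$ before knowing $P(f)$ is entire; strictly you should first note that a pole of $P(f)$ of order $q$ would be a pole of $[P(f)]^{(k)}$ of order $q+k$, so the identity already rules out poles, after which the ODE argument on $\mathbb{C}$ applies --- a one-line fix, not a gap.
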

\begin{proof}
    Since $ \mathfrak{F}\equiv \mathfrak{G} $ i.e., $ P(f)\equiv [P(f)]^{(k)} $ i.e., we have \bea \label{e2.7} f_{*}^pQ(f_*)=[f_{*}^pQ(f_*)]^{(k)} .\eea  (i). Our aim is to $ Q(f_8) $ reduces to a non-zero monomial $ c_jf_{*}^j $ for some $ j\in\{0, 1, 2, \ldots,m \}. $\par On contrary, let us suppose that $ Q(f_*)=c_mf_{*}^m+\ldots+c_1f^{*}+c_0 $, in which at least two terms present. \par it follows from (\ref{e2.7}) that $ f $ can not have any poles i.e., in other words $ f $ must be an entire function. \par Again since $ p>k+1 $, so one can check that $ 0 $ is an Picard exceptional value of $ f_* $. So, we have $ f_*=e^{h(z)}, $ where $ h $ is a non-constant entire function. Therefore, an elementary calculation shows that \bea\label{e2.8} c_j\bigg[f_{*}^{p+j}-\left(f_{*}^{p+j}\right)^{(k)}\bigg]=\phi_{j}\left(h^{\prime},\ldots, h^{(k)}\right)e^{(p+j)h},  \eea where $ \phi_{j}\equiv \phi_{j}\left(h^{\prime},\ldots, h^{(k)}\right) $ $ (j=0, 1, \ldots,m ) $ are differential polynomials in $ h^{\prime}, \ldots, h^{(k)}. $ \par From (\ref{e2.7}) and (\ref{e2.8}), we get that \bea\label{e2.9} \phi_{m}e^{mh}+\ldots+\phi_{1}e^{h}+\phi_{0}\equiv 0. \eea\par Since $ T(r,\phi_{j})=S(r,f) $ for $ (j=0, 1, \ldots, m), $ therefore by \emph{Borel unicity theorem} \cite[Theorem 1.52]{bibid}, one can get from (\ref{e2.9}) that $ \phi_{j}\equiv 0 $.\par Since $ Q(f_*) $ contains at least two terms, so there must exist $ s,\; t\in\{0, 1, 2,  \} $ with $ s\neq t $, so we must have from (\ref{e2.8}) that \beas f_{*}^{p+s}\equiv [f_{*}^{p+s}]^{(k)}\;\; \text{and}\;\; f_{*}^{p+t}\equiv [f_{*}^{p+t}]^{(k)},\eeas which is a contradiction, otherwise, in this case, the function $ f $ would have two different forms. \par Thus we see that $ Q(f_*)=c_jf_{*}^{j} $ for some $ j\in\{0, 1, 2, \ldots, m\} $.\par (ii). We note that the form of the function $ f_* $ satisfying $ f_{*}^{p+j}=[f_{*}^{p+j}]^{(k)} $ will be $ f_*=c e^{\frac{\lambda}{p+j}} $, where $ c $ is a non-zero constant and $ \lambda^k=1 $. Hence, we see that \beas f(z)=c e^{\frac{\lambda}{p+j}}+d_p.\eeas
\end{proof}
\section{Proof of Theorems}
\subsection{\sc Proof of Theorem \ref{th1.1}}
Let $ \mathfrak{F} $ and $ \mathfrak{G} $ be defined as in (\ref{e2.1}) and $ \mathfrak{H} $ be as in (\ref{e2.2}). Since $ P(f)-a $ and $ [P(f)]^{(k)}-a $ share $ (0,m) $, so it follows that $ \mathfrak{F} $ and $ \mathfrak{G} $ share $ (1,m) $.\par  Let $ \mathfrak{H}\not\equiv 0.$ So it follows from  Lemma \ref{lem2.3} and (i) of Lemma \ref{lem2.6}, we have \beas && T(r,\mathfrak{G})\\ &\leq& N_2(r,\mathfrak{F})+N_2\left(r,\frac{1}{\mathfrak{F}}\right)+N_2(r,\mathfrak{G})+N_2\left(r,\frac{1}{\mathfrak{G}}\right)+S(r,\mathfrak{F})+S(r,\mathfrak{G})\\&\leq& N_2(r,P(f))+N_2\left(r,\frac{1}{P(f)}\right)+N_2(r,[P(f)]^{(k)})+N_2\left(r,\frac{1}{[P(f)]^{(k)}}\right)\\&\leq& T\left(r,[P(f)]^{(k)}\right)-nT(r,f)+N_2\left(r,\frac{1}{P(f)}\right)+N_{k+2}\left(r,\frac{1}{P(f)}\right)\\&&+4\ol N(r,f)+S(r,f).  \eeas\par By \emph{Lemma \ref{lem2.1}}, we obtained \beas && T\left(r,[P(f)]^{(k)}\right) \\&\leq& T\left(r,[P(f)]^{(k)}\right)-nT(r,f)+N_2\left(r,\frac{1}{P(f)}\right)+N_{k+2}\left(r,\frac{1}{P(f)}\right)\\&&+4\ol N(r,f)+S(r,f).  \eeas i.e., \beas  nT(r,f) \leq 2N_{k+2}\left(r,\frac{1}{P(f)}\right)+4\ol N(r,f)+S(r,f).  \eeas This shows that, for any arbitrary $ \epsilon>0 $, \beas \bigg[2\delta_{k+2}\left(0,P(f)\right)+4\Theta(\infty,f)\bigg] T(r,f)\leq (6-n+\epsilon)T(r,f)+S(r,f), \eeas which contradicts \beas 2\delta_{k+2}\left(0,P(f)\right)+4\Theta(\infty,f)>6-n. \eeas \par Therefore, $ \mathfrak{H}\equiv 0 $. Integrating, twice, we get that \beas \frac{1}{\mathfrak{F}-1}=\frac{\mathfrak{A}}{\mathfrak{G}-1}+\mathfrak{B}, \eeas where $ \mathfrak{A}(\neq 0) $ and $ \mathfrak{B} $ are constants. \par Therefore, we have \bea\label{e3.1} \mathfrak{F}=\frac{(\mathfrak{B}+1)\mathfrak{G}+(\mathfrak{A}-\mathfrak{B}-1)}{\mathfrak{BG}+(\mathfrak{A}-\mathfrak{B})} \eea and hence we see that \beas T(r,\mathfrak{F})=T(r,\mathfrak{G})+S(r,f). \eeas
\par We are at a position to discuss the following three cases.\\
\noindent{\sc Case 1.} Suppose that $ \mathfrak{B}\neq -1, 0 $.\\
\noindent{\sc Subcase 1.1.}
If $ \mathfrak{A}-\mathfrak{B}-1\neq 0 $, then from (\ref{e3.1}), we have \beas \ol N\left(r,\frac{1}{\mathfrak{G}+\displaystyle\frac{\mathfrak{A}-\mathfrak{B}-1}{\mathfrak{B}+1}}\right)=\ol N\left(r,\frac{1}{\mathfrak{F}}\right). \eeas\par Applying \emph{Second Main Theorem}, we have \beas  T(r,\mathfrak{G})&\leq& \ol N(r,\mathfrak{G})+\ol N\left(r,\frac{1}{\mathfrak{G}}\right)+\ol N\left(r,\frac{1}{\mathfrak{G}+\displaystyle\frac{\mathfrak{A}-\mathfrak{B}-1}{\mathfrak{B}+1}}\right)+S(r,\mathfrak{G})\\&\leq& \ol N(r,f)+\ol N\left(r,\frac{1}{[P(f)]^{(k)}}\right)+\ol N\left(r,\frac{1}{P(f)}\right)+S(r,f)\\&\leq& \ol N(r,f)+ T\left(r,[P(f)]^{(k)}\right)-nT(r,f)+\ol N_{k+2}\left(r,\frac{1}{P(f)}\right)\\&&+\ol N\left(r,\frac{1}{P(f)}\right)+S(r,f). \eeas i.e., we have \beas && T\left(r,[P(f)]^{(k)}\right)\\&\leq& \ol N(r,f)+ T\left(r,[P(f)]^{(k)}\right)-nT(r,f)+\ol N_{k+2}\left(r,\frac{1}{P(f)}\right)+\ol N\left(r,\frac{1}{P(f)}\right)\\&&+S(r,f), \eeas and so we have \beas && nT(r,f) \\&\leq& \ol N(r,f)+\ol N_{k+2}\left(r,\frac{1}{P(f)}\right)+\ol N\left(r,\frac{1}{P(f)}\right)+S(r,f)\\&\leq& \ol N(r,f)+2\ol N_{k+2}\left(r,\frac{1}{P(f)}\right)+S(r,f).\eeas
This shows that, for any arbitrary $ \epsilon>0 $, \beas \bigg[2\delta_{k+2}\left(0,P(f)\right)+\Theta(\infty,f)\bigg] T(r,f)\leq (3-n+\epsilon)T(r,f)+S(r,f), \eeas which contradicts \beas 2\delta_{k+2}\left(0,P(f)\right)+4\Theta(\infty,f)>6-n. \eeas
\\
\noindent{\sc Subcase 1.2.} Thus we have $ \mathfrak{A}-\mathfrak{B}-1=0 $. Then it follows from (\ref{e3.1}) that \beas \ol N\left(r,\frac{1}{\mathfrak{G}+\displaystyle\frac{1}{\mathfrak{B}}}\right)=\ol N(r,\mathfrak{F}). \eeas \par By the same argument as above, we can reached in a contradiction.\\
\noindent{\sc Case 2.} Let $ \mathfrak{B}=-1 $.
\\
\noindent{\sc Subcase 2.1.} Suppose that $ \mathfrak{A}+1\neq 0 $.
Then we have from (\ref{e3.1}) \beas \ol N\left(r,\frac{1}{\mathfrak{G}_(\mathfrak{A}+1)}\right)=\ol N(r,\mathfrak{F}). \eeas
\par By the similar argument to the \emph{Case 1}, we can arrive at a contradiction.\\
\noindent{\sc Subcase 2.2.} Let $ \mathfrak{A}+1= 0 $, then from (\ref{e3.1}), we see that $ \mathfrak{FG}=1 $. i.e., we have \bea\label{e3.2} P(f)[P(f)]^{(k)}=a^2. \eea
\\
\noindent{\sc Subcase 2.2.1.} Let $ f $ be a rational function. Then, $ P(f) $, and hence $ [P(f)]^{(k)} $ will also be a rational function. Therefore, from (\ref{e3.2}), we see that $ a $ is a non-zero constant. So from (\ref{e2.1}), we see that $ P(f) $ has no zero and pole. Since $ f $ is non-constant, hence we arrive at a contradiction.
\\
\noindent{\sc Subcase 2.2.2.} Let $ f $ be a transcendental meromorphic function. Then by \emph{Lemma \ref{lem2.7}} in view of (\ref{e3.2}), we have \beas 2n T(r,f)&\leq& 2 N\left(r,\frac{1}{P(f)}\right)+2T\left(r,P(f)[P(f)]^{(k)}\right)+S(r,f)\\&\leq & 2 N\left(r,\frac{1}{P(f)}\right)+2T\left(r,a^2\right)+S(r,f)\\&\leq & 2 N\left(r,\frac{1}{a^2}\right)+S(r,f)\\&\leq& S(r,f),  \eeas which is a contradiction. \\
\noindent{\sc Case 3.} Suppose that $ \mathfrak{B}\equiv 0 $.
\\
\noindent{\sc Subcase 3.1.} Let $ \mathfrak{A}-1\neq 0 $, then we see from (\ref{e3.1}) that \beas \ol N\left(r,\frac{1}{\mathfrak{G}+(\mathfrak{A}-1)}\right)=\ol N\left(r,\frac{1}{\mathfrak{F}}\right). \eeas\par Similar to the argument as in \emph{Case 1}, we can arrive at a contradiction.
\noindent{\sc Subcase 3.2.} Therefore, we have $ \mathfrak{A}-1= 0 $. So we obtain  from (\ref{e3.1}) that $ \mathfrak{F}\equiv \mathfrak{G}.$ Now by Applying \emph{Lemma \ref{lem2.8}}, we see that $ Q(f_*) $ reduces to a non-zero monomial $ c_jf_{*}^p $, and hence, the function $ f $ takes the form  \beas f(z)=c e^{\frac{\lambda}{p+j}}+d_p, \eeas where $ c $ is a non-zero constant and $ \lambda^k=1. $
\subsection{\sc Proof of Theorem \ref{th1.2}}
Let $ \mathfrak{F} $ and $ \mathfrak{G} $ be defined by (\ref{e2.1}). Then it is clear that $ \mathfrak{F}-1=\displaystyle\frac{P(f)-a}{a} $ and $ \mathfrak{G}=\displaystyle\frac{[P(f)]^{(k)}-a}{a}. $\par Since $ P(f)-a $ and $ [P(f)]^{(k)}-a $ share $(0,1) $, so it follows that $ \mathfrak{F} $ and $ \mathfrak{G} $ share $ (1,1) $ except the zeros and poles of $ a(z). $
Let $ \mathfrak{H} $ be defined as in (\ref{e2.2}) and we suppose that $ \mathfrak{H}\not\equiv 0 $. Then by \emph{Lemma \ref{lem2.6}}, we have \bea&& \label{e33.3} T(r,\mathfrak{G})\\&\leq &\nonumber N_2(r,\mathfrak{F})+N_2\left(r,\frac{1}{\mathfrak{F}}\right)+N_2(r,\mathfrak{G})+N_2\left(r,\frac{1}{\mathfrak{G}}\right)+\ol N^{L}\left(r,\frac{1}{\mathfrak{G}-1}\right)\\&&+S(r,\mathfrak{F})+S(r,\mathfrak{G})\nonumber. \eea \par Next we see that \beas \ol N^{L}\left(r,\frac{1}{\mathfrak{G}-1}\right)&\leq& \frac{1}{2} N\left(r,\frac{\mathfrak{G}}{\mathfrak{G}^{\prime}}\right)\\&\leq& N\left(r,\frac{\mathfrak{G}^{\prime}}{\mathfrak{G}}\right)+S(r,\mathfrak{G})\\&\leq& \frac{1}{2} \ol N(r,\mathfrak{G})+\frac{1}{2}\ol N\left(r,\frac{1}{\mathfrak{G}^{\prime}}\right)+S(r,\mathfrak{G})\\&\leq& \frac{1}{2}\ol N(r,f)+\frac{1}{2}\ol N\left(r,\frac{1}{[P(f)]^{(k)}}\right)+S(r,f). \eeas\par In view of \emph{Lemma \ref{lem2.3}}, we get from (\ref{e33.3}) that \beas && T\left(r,[P(f)]^{(k)}\right)\\ &\leq& N_2(r,f)+N_2\left(r,\frac{1}{P(f)}\right)+N_2(r,[P(f)]^{(k)})+\frac{1}{2}\ol N(r,f)+\frac{1}{2}\ol N\left(r,\frac{1}{[P(f)]^{(k)}}\right)\\&&+S(r,f)\\&\leq& N_{k+2}\left(r,\frac{1}{P(f)}\right)+T\left(r,[P(f)]^{(k)}\right)-nT(r,f)+N_{k+2}\left(r,\frac{1}{P(f)}\right)\\&&+\frac{1}{2}N_{k+2}\left(r,\frac{1}{P(f)}\right)\frac{k+9}{2}\ol N(r,f)+S(r,f). \eeas i.e., for any arbitrary $ \epsilon>0 $, we get \beas  nT(r,f)&\leq& \frac{5}{2}N_{k+2}\left(r,\frac{1}{P(f)}\right)+\frac{k+9}{2}\ol N(r,f)+S(r,f)\\&\leq& \bigg\{\frac{5n+k+9}{2}-\frac{5n}{2}\delta_{k+2}(0,P(f))-\frac{k+9}{2}\Theta(\infty,f)+\epsilon\bigg\}T(r,f)\\&&+S(r,f), \eeas which shows that
\beas {5n}\delta_{k+2}(0,P(f))+(k+9)\Theta(\infty,f)\leq{3n+k+9}, \eeas which contradicts \beas {5n}\delta_{k+2}(0,P(f))+(k+9)\Theta(\infty,f)>{3n+k+9}. \eeas\par Thus we have $ \mathfrak{H}\equiv 0 $. Next  proceeding exactly same way as done in the proof \emph{Theorem \ref{th1.1}}, we get the conclusion of \emph{Theorem \ref{th1.2}}.

\subsection{\sc Proof of Theorem \ref{th1.3}}
Let $ \mathfrak{F} $ and $ \mathfrak{G} $ be defined by (\ref{e2.1}). Then it is clear that $ \mathfrak{F}-1=\displaystyle\frac{P(f)-a}{a} $ and $ \mathfrak{G}=\displaystyle\frac{[P(f)]^{(k)}-a}{a}. $\par Since $ P(f)-a $ and $ [P(f)]^{(k)}-a $ share $ 0 $ $ IM $, so it follows that $ \mathfrak{F} $ and $ \mathfrak{G} $ share $ 1 $ except the zeros and poles of $ a(z). $ \par We suppose that $ \mathfrak{F}\not\equiv\mathfrak{G}.$ Let \beas  \Psi &=& \frac{1}{\mathfrak{F}}\left(\frac{\mathfrak{G}^{\prime}}{\mathfrak{G}-1}-(k+1)\frac{\mathfrak{F}^{\prime}}{\mathfrak{F}-1}\right)\\ &=& \frac{\mathfrak{G}}{\mathfrak{F}}\left(\frac{\mathfrak{G}^{\prime}}{\mathfrak{G}-1}-\frac{\mathfrak{G}^{\prime}}{\mathfrak{G}}\right)-(k+1)\left(\frac{\mathfrak{F}^{\prime}}{\mathfrak{F}-1}-\frac{\mathfrak{F}^{\prime}}{\mathfrak{F}}\right). \eeas \par We first suppose that $ \Psi\equiv 0 $. Then, we have \bea\label{e3.3} \mathfrak{G}-1=c \left(\mathfrak{F}-1\right)^{(k+1)}, \eea where $ c $ is a non-zero complex constants.\par Let $ z_0 $ be a pole of $ f $ with multiplicity $ p(\geq 1) $ such that $ a(z_0)\neq 0, \infty $. Clearly, $ z_0 $ is a pole of $ \mathfrak{G}-1 $ with multiplicity $ np+k $, and a pole of $ (\mathfrak{G}-1)^{k+1} $ with multiplicity $ np(k+1). $ Then it follows from (\ref{e3.3}),  we must have  $ np+k=np(k+1). $ If $ np\geq 2 $, then we arrive at a contradiction, and so we have \bea\label{e3.4} N_{(2}(r,f)=S(r,f). \eea\par By our assumption is \beas (k-1)\Theta(\infty,f)+\delta_{2}(\infty,f)+n[\delta_k(0,P(f))+\delta_{k+1}(0,P(f))]>k+n \eeas, which in turn shows that \beas (k-1)\Theta(\infty,f)+\delta_{2}(\infty,f)+n\delta_k(0,P(f))>n \eeas By \emph{Lemma \ref{lem2.1}}, we have \beas n(k+1) T(r,f)&=& (k+1) T(r,\mathfrak{F})+S(r,f)\\&\leq& T\left(r,(\mathfrak{F}-1)^{k+1}\right)+S(r,f)\\&\leq& T(r,\mathfrak{G})+S(r,f)\\&\leq&  T\left(r,\frac{\mathfrak{G}}{\mathfrak{F}}\right)+T(r,\mathfrak{F})+S(r,f)\\&\leq&  N\left(r,\frac{[P(f)]^{(k)}}{P(f)}\right)+m\left(r,\frac{[P(f)]^{(k)}}{P(f)}\right)+S(r,f)\\&\leq& k \ol N(r,f)+N_k(r,0;P(f))+nT(r,f)+S(r,f)\\&\leq& (k-1)\ol N(r,f)+ N_2(r,f)+N_k(r,0;P(f))+nT(r,f)\\&&+S(r,f)\\&\leq& \left(k+2n- (k-1)\Theta(\infty,f)-\delta_{2}(\infty,f)-n\delta_k(0, P(f)+\epsilon)\right)T(r,f)\\&&+S(r,f), \eeas which implies $ (k-1)\Theta(\infty,f)+\delta_{2}(\infty,f)+n\delta_k(0,P(f))\leq n+(1-k)n\leq n, $ and this contradicts \beas (k-1)\Theta(\infty,f)+\delta_{2}(\infty,f)+n\delta_k(0,P(f))>n. \eeas \par Thus we see that $ \Psi\not\equiv 0 $.\par From the \emph{Fundamental estimate of logarithmic derivative} it follows that $ m(r, \Psi)=S(r,f). $ \par Let $ z_1 $ be a pole of $ f $ with multiplicity $ q(\geq 1) $, such that $ a(z_1)\neq 0, \infty $, then we see that $$
\Psi(z) = \left\{
\begin{array}{ll}
O((z-z_1)^n) & \quad q=1 \\
O((z-z_1)^{n(q-1)}) & \quad q\geq 2
\end{array}
\right.
$$
\par In view of the definition of $ \Psi $, we have \bea\label{e3.5} && N(r,\Psi)\\&\leq&\nonumber \ol N\left(r,\frac{1}{\mathfrak{F}}\right)+\ol N_{k+1}\left(r,\frac{1}{P(f)}\right)+S(r,f)\\&\leq& N\left(r,\frac{1}{\frac{\mathfrak{F}-\mathfrak{G}}{\mathfrak{F}}}\right)+\ol N_{k+1}\left(r,\frac{1}{P(f)}\right)+S(r,f)\nonumber\\&\leq& T\left(r,\frac{\mathfrak{G}}{\mathfrak{F}}\right)+\ol N_{k+1}\left(r,\frac{1}{P(f)}\right)+S(r,f)\nonumber\\&\leq& N\left(r,\frac{[P(f)]^{(k)}}{P(f)}\right)+m\left(r,\frac{[P(f)]^{(k)}}{P(f)}\right)+\ol N_{k+1}\left(r,\frac{1}{P(f)}\right)+S(r,f)\nonumber\\&\leq& k\ol N(r,f)+N_k\left(r,\frac{1}{P(f)}\right)+\ol N_{k+1}\left(r,\frac{1}{P(f)}\right)+S(r,f)\nonumber. \eea\par Using (\ref{e3.5}), we have \beas && N(r,f)-\ol N_{(2}(r,f)\\&\leq& N(r,0;\Psi)\\&\leq& T\left(r,\frac{1}{\Psi}\right)-m\left(r,\frac{1}{\Psi}\right)+S(r,f)\\&\leq& T\left(r,\Psi \right)-m\left(r,\frac{1}{\Psi}\right)+S(r,f)\\&\leq& N(r,\Psi)+m(r,\Psi)-m\left(r,\frac{1}{\Psi}\right)+S(r,f)\\&\leq& k \ol N(r,f)+N_{k}\left(r,\frac{1}{P(f)}\right)+N_{k+1}\left(r,\frac{1}{P(f)}\right)-m\left(r,\frac{1}{\Psi}\right)+S(r,f). \eeas \par Again from the definition of $ \Psi $, we see that \bea\label{e3.6} m(r,P(f))\leq m\left(r,\frac{1}{\Psi}\right)+S(r,f). \eea \par Now it follows from (\ref{e3.5}) and (\ref{e3.6}), that \beas && nT(r,f)\\&\leq& (k-1) \ol N(r,f)+N_2(r,f)+N_{k}\left(r,\frac{1}{P(f)}\right)+N_{k+1}\left(r,\frac{1}{P(f)}\right)+S(r,f)\\&\leq& \nonumber\bigg\{k+2n (k-1)\Theta(\infty,f)+\delta_{2}(\infty,f)+n[\delta_k(0,P(f))+\delta_{k+1}(0,P(f))]+\epsilon\bigg\}T(r,f)\\&&+S(r,f),  \eeas
which contradicts \beas (k-1)\Theta(\infty,f)+\delta_{2}(\infty,f)+n[\delta_k(0,P(f))+\delta_{k+1}(0,P(f))]>k+n. \eeas\par Therefore, we must have $ \mathfrak{F}\equiv \mathfrak{G} $ i.e., $ P(f)\equiv [P(f)]^{(k)} $. By \emph{Lemma  \ref{lem2.8}}, if $ p>k+1 $ then
\begin{enumerate}
    \item[(i).] $ Q(f_*) $ reduces to a non-zero monomial $ c_jf_{*}^j $ for some $ j\in \{0,1,2, \ldots,m\} $.
    \item[(ii).]  $ f(z) $ takes the form \beas f(z)=c e^{\frac{\lambda}{p+j}z}+d_p, \eeas where $ c $ is a non-zero constant and $ \lambda^k=1 $.
\end{enumerate}


\end{document}